\documentclass[12pt]{amsart}

\usepackage{CJKutf8}
\usepackage{amssymb}
\usepackage{eucal}
\usepackage{color}
\usepackage{arydshln}
\usepackage{verbatim}

\usepackage{enumitem}
\usepackage{multirow,bigdelim}
\usepackage{amsfonts}
\usepackage{dsfont}
\usepackage{amsmath}
\usepackage{mathrsfs}
\usepackage{todonotes}
\usepackage{hyperref}
\usepackage{multimedia}
\usepackage{graphicx}
\usepackage{indentfirst}
\usepackage{bm}
\usepackage{amsthm}
\usepackage{mathrsfs}
\usepackage{latexsym}
\usepackage{amsmath}
\usepackage{amssymb}
\usepackage{microtype}
\usepackage{relsize}
\usepackage{hyperref}

\voffset = -32pt \hoffset = -40pt \textwidth = 6.25in
\parskip 1.75pt
\textheight = 9in

\DeclareSymbolFont{SY}{U}{psy}{m}{n}
\DeclareMathSymbol{\emptyset}{\mathord}{SY}{'306}

\theoremstyle{plain}

\newtheorem{thm}{Theorem}[section]
\newtheorem{cor}[thm]{Corollary}
\newtheorem{lem}[thm]{Lemma}
\newtheorem{prop}[thm]{Proposition}
\theoremstyle{definition}
\newtheorem{defn}[thm]{Definition}

\numberwithin{equation}{section}

\begin{document}
\title[Completely Bounded Representations And Connes Embedding Problem]{Completely Bounded Representations Into Von Neumann Algebras And Connes Embedding Problem}
\author{Junsheng Fang}
\address{School of Mathematical Sciences, Hebei Normal University, Shijiazhuang, Hebei, 050024, China}
\email{junshengfang@hotmail.com}

\author{Chunlan Jiang}
\address{School of Mathematical Sciences, Hebei Normal University, Shijiazhuang, Hebei, 050024, China}
\email{cljiang@hebtu.edu.cn}

\author{Liguang Wang}
\address{School of Mathematical Sciences, Qufu Normal University, Qufu, Shandong, 273165, China}
\email{wangliguang0510@163.com}

\author{Yanli Wang}
\address{School of Mathematical Sciences, Hebei Normal University, Shijiazhuang, Hebei, 050024, China}
\email{ylwangmathematics@outlook.com}
\maketitle
\begin{abstract}
In this paper, we prove that if $\mathcal{A}$ is a unital separable $C^*$-algebra, $\mathcal{M}$ is a von Neumann algebra which has the Kirchberg's quotient weak expectation property (QWEP), and $\phi:\, \mathcal{A}\rightarrow \mathcal{M}$ is a unital completely bounded representation, then there is an invertible operator $S\in \mathcal{M}$ such that $S\phi(\cdot) S^{-1}$ is a $\ast$-representation. On the other hand, Gilles Pisier proved the following result: a unital $C^*$-algebra $\mathcal{A}$ is nuclear if and only if for every unital completely bounded representation $\phi$ of $\mathcal{A}$ into an arbitrary von Neumann algebra $\mathcal{M}$ there is an invertible operator $S\in \mathcal{M}$ such that $S\phi(\cdot) S^{-1}$ is a $\ast$-representation. This implies that there exist von Neumann algebras which are not QWEP. Eberhard Kirchberg showed that every von Neumann algebra has QWEP if and only if every tracial von Neumann algebra embeds into the ultrapower $\mathcal{R}^w$ of the hyperfinite type ${\rm II}_1$ factor $\mathcal{R}$. This provides a negative answer to the Connes Embedding Problem. This paper  relies on previous work of Gilles Pisier and Florin Pop.

\end{abstract}

\section{Introduction}

In a paper from 1955, Richard V.Kadison~\cite{Kad} considered the following problem: Is any bounded, non-selfadjoint representation $\phi$ of a $C^*$-algebra $\mathcal{A}$ on a Hilbert space $\mathcal{H}$ similar to a $\ast$-representation? i.e., does there exist an invertible operator $S\in \mathcal{B}(\mathcal{H})$, such that $S\phi(\cdot)S^{-1}$ is a $\ast$-representation of $\mathcal{A}$?  The similarity problem is still open, although a number of partial results have been obtained.  Using the deep results of Alain Connes~\cite{Connes} on injective von Neumann algebras and the characterization of nuclear $C^*$-algebras given in~\cite{C-E}, one can prove that every bounded representation of a nuclear $C^*$-algebra is similar to a $\ast$-representation~\cite{Bunce,Christensen}. The similarity problem for bounded cyclic representations of arbitrary $C^*$-algebras was firstly considered by B. A. Barnes~\cite{Barnes}. He proved that when $\phi$ is a bounded representation of a $C^*$-algebra $\mathcal{A}$ on a Hilbert space $\mathcal{H}$, such that $\phi(\mathcal{A})$ and $\phi(\mathcal{A})^*$ have a common cyclic vector $\xi$, then there exists a closed, injective operator $S$ on $\mathcal{H}$, such that $S$ and $S^{-1}$ are densely defined, and such that $x\rightarrow \overline{S\phi(x)S^{-1}}$ is a $\ast$-representation of $\mathcal{A}$. J. W.  Bunce~\cite{Bunce} sharpened this by proving that $S$ can be chosen bounded, but not necessarily with bounded inverse. He also removed the condition that $\xi$ is cyclic for $\phi(\mathcal{A})^*$. Erik Christensen~\cite{Christensen} proved  that every irreducible bounded representation of a $C^*$-algebra is similar to a $\ast$-representation. Finally, Uffe Haagerup~\cite{Haagerup} proved that every bounded representation of a $C^*$-algebra with a cyclic vector $\xi$ is similar to a $\ast$-representation. Furthermore, the following  theorem is also proved.

\begin{thm}${\rm (Haagerup-Hadwin-Wittstock)}$\cite{Haagerup, Hadwin, Wit}\label{Haagerup}
Let \(\phi\) be a bounded non-degenerate representation of a \(C^{*}\)-algebra \(\mathcal{A}\) on a Hilbert space \(\mathcal{H}\). Then the following are equivalent:

\begin{enumerate}
    \item [{\rm (1)}] \(\phi\) is a completely bounded representation.
    \item [{\rm (2)}] \(\phi\) is similar to a \(*\)-representation.
\end{enumerate}
\end{thm}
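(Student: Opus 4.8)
The plan is to prove the two implications separately; the direction $(2)\Rightarrow(1)$ is routine and $(1)\Rightarrow(2)$ carries all the content. For $(2)\Rightarrow(1)$, suppose $S\phi(\cdot)S^{-1}=\pi$ is a $*$-representation for some invertible $S$. Every $*$-representation is completely contractive, and conjugation by a fixed invertible operator distorts the completely bounded norm by at most a factor $\|S\|\,\|S^{-1}\|$: writing $\phi(a)=S^{-1}\pi(a)S$ and amplifying to $M_n(\mathcal{A})$ gives $\|\phi\|_{cb}\le\|S\|\,\|S^{-1}\|<\infty$.

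For $(1)\Rightarrow(2)$, I would first reduce to the unital case. A non-degenerate representation extends to a unital completely bounded homomorphism $\widetilde\phi$ of the unitization $\widetilde{\mathcal{A}}$ on the same space via $\widetilde\phi(a+\lambda 1)=\phi(a)+\lambda I$, so I may assume $\mathcal{A}$ and $\phi$ are unital. The engine is then Wittstock's factorization theorem, the completely bounded generalization of Stinespring: since $\phi$ is completely bounded there are a Hilbert space $\mathcal{K}$, a $*$-representation $\pi:\mathcal{A}\to\mathcal{B}(\mathcal{K})$ which I take minimal so that $\overline{\pi(\mathcal{A})W\mathcal{H}}=\mathcal{K}$, and bounded operators $V:\mathcal{K}\to\mathcal{H}$ and $W:\mathcal{H}\to\mathcal{K}$ with $\phi(a)=V\pi(a)W$ for all $a$ and $\|V\|\,\|W\|=\|\phi\|_{cb}$.

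The idea now is to locate a $\pi$-invariant subspace on which $\phi$ is genuinely a $*$-representation. Unitality gives $VW=\phi(1)=I$, whence $Q:=WV\in\mathcal{B}(\mathcal{K})$ is idempotent, and I expect $\mathrm{range}(Q)=W\mathcal{H}$ because $QW=W(VW)=W$. Expanding $\phi(a)\phi(b)=\phi(ab)$ through the factorization, on the generating vectors $\pi(b)W\mathcal{H}$ one finds $Q\pi(a)(I-Q)\pi(b)W=0$, and by minimality this upgrades to the operator identity $Q\pi(a)(I-Q)=0$ for all $a$. This says precisely that $\mathcal{N}:=\ker Q=\mathrm{range}(I-Q)$ is invariant under $\pi(\mathcal{A})$. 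Since $\pi$ is a $*$-representation, $\mathcal{N}^{\perp}$ is then also $\pi$-invariant, so the compression $\pi_{0}(a):=P\pi(a)|_{\mathcal{N}^{\perp}}$ (with $P$ the orthogonal projection onto $\mathcal{N}^{\perp}$) is a $*$-representation. I would finally check that $S:=PW:\mathcal{H}\to\mathcal{N}^{\perp}$ is bounded and invertible—$\mathrm{range}(W)=\mathrm{range}(Q)$ is an algebraic complement of $\mathcal{N}=\ker Q$, so $P$ carries it isomorphically onto $\mathcal{N}^{\perp}$—and, using $P(Q-I)=0$ and the $\pi$-invariance of $\mathcal{N}$, that $S\phi(a)=\pi_{0}(a)S$, giving $\phi(a)=S^{-1}\pi_{0}(a)S$ as desired.

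The main obstacle is the passage from the bare factorization to a \emph{two-sided} bounded similarity. Complete boundedness alone yields only the one-sided data $\phi=V\pi(\cdot)W$; it is the multiplicativity of $\phi$ together with the normalization $\phi(1)=I$ that forces the idempotent $Q=WV$ to exist and its kernel to be $\pi$-invariant, and it is exactly this invariance that makes $S=PW$ invertible with bounded inverse rather than merely a contraction. I also anticipate having to argue carefully that minimality of the dilation genuinely suffices to pass from $Q\pi(a)(I-Q)\pi(b)W=0$ to $Q\pi(a)(I-Q)=0$, and that the unitization step preserves non-degeneracy of $\phi$, so that the resulting $S$ acts on all of $\mathcal{H}$.
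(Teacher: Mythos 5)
Your argument is correct. The paper does not actually prove Theorem~\ref{Haagerup}; it imports it from \cite{Haagerup, Hadwin, Wit}, but the route it leans on is reconstructed in Section~4: Lemma~\ref{1} (Hadwin) gives the factorization $\phi(a)=X\pi(a)Y$ with $XY=I$, Lemma~\ref{2} then produces a positive invertible $T=|X^*|^{-2}$ satisfying $T\phi(a)=\phi(a^*)^*T$ purely from the polar decompositions of $X$ and $Y$ (this works for \emph{any} unital completely bounded linear map, multiplicativity unused), and Lemma~\ref{three equivalences} converts that symmetry into the similarity. Your proof starts from the same Wittstock factorization but then exploits multiplicativity immediately: $VW=I$ makes $Q=WV$ a bounded idempotent, $\phi(ab)=\phi(a)\phi(b)$ plus minimality of the dilation forces $Q\pi(a)(I-Q)=0$, so $\ker Q$ is $\pi$-invariant and compressing to $(\ker Q)^\perp$ yields the $*$-representation, with $S=PW$ invertible because $\operatorname{range}Q=W\mathcal{H}$ is a closed algebraic complement of $\ker Q$. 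All the steps check out (the density argument from $Q\pi(a)(I-Q)\pi(b)W=0$ to $Q\pi(a)(I-Q)=0$ is legitimate once you cut $\pi$ down to $\overline{\operatorname{span}}\,\pi(\mathcal{A})W\mathcal{H}$, and the intertwining $S\phi(a)=\pi_0(a)S$ follows from $P(I-Q)=0$ and the invariance of $\ker Q$). Two cosmetic points: your $S$ maps $\mathcal{H}$ onto $\mathcal{N}^\perp$ rather than $\mathcal{H}$, so to match the usual formulation you should compose with a unitary identification of $\mathcal{N}^\perp$ with $\mathcal{H}$ (they have equal Hilbert dimension since $S$ is a bounded bijection); and minimality should read $\overline{\operatorname{span}}\,\pi(\mathcal{A})W\mathcal{H}=\mathcal{K}$. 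What the invariant-subspace route buys you is a conceptually transparent picture (the similarity is literally a change of the idempotent $Q$ into the orthogonal projection $P$); what Hadwin's route in the paper buys is that it sidesteps minimality and invariant subspaces entirely and delivers the stronger statement $T\phi(a)=\phi(a^*)^*T$ for all unital completely bounded maps, deferring multiplicativity to the trivial Lemma~\ref{three equivalences} --- which is exactly the form the paper needs repeatedly in Sections~3 and~4.
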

Later on, Erik Christensen~\cite{Chr} proved every bounded representation of a type ${\rm II}_1$ factor with property $\Gamma$ is similar to a $\ast$-representation. Very recently, Florin Pop~\cite{Pop} proved that every bounded representation of $\mathcal{M}\bar{\otimes}\mathcal{N}$ of type ${\rm II}_1$ factors is similar to a $\ast$-representation provided $\mathcal{M}$ or $\mathcal{N}$ can be embedded into the ultrapower algebra $\mathcal{R}^w$ of the hyperfinite type ${\rm II}_1$ factor $\mathcal{R}$. The following important theorem  (\cite{Pop}, Proposition 3.1) plays a key role in Florin Pop's work, which also plays a key role in our (both) proofs of the main result (Theorem~\ref{Main}).

\begin{thm}{\rm (Pop)}\cite{Pop}\label{Pop}
Let \(\mathcal{A}_{1},\mathcal{A}_{2},\mathcal{B}_{1},\mathcal{B}_{2}\) be \(C^{*}\)-algebras such that \(\mathcal{A}_{1}\) has the LP and \(\mathcal{B}_{1}\) is QWEP. If \(\alpha:\mathcal{A}_{1}\to \mathcal{B}_{1}\) is a completely bounded homomorphism and \(\varphi:\mathcal{A}_{2}\to \mathcal{B}_{2}\) is a unital completely positive map {\rm (u.c.p.)} map, then the map \(\alpha\otimes\varphi\) extends to a completely bounded map from \(\mathcal{A}_{1}\otimes_{max}\mathcal{A}_{2}\) with values in \(\mathcal{B}_{1}\otimes_{max}\mathcal{B}_{2}\) satisfying \(\|\alpha\otimes\varphi\|_{cb}\leq\|\alpha\|_{cb}\).
\end{thm}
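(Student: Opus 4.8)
The plan is to exploit the two hypotheses in tandem. The QWEP of $\mathcal{B}_1$ lets me replace $\mathcal{B}_1$ by a WEP algebra lying above it, and the LP of $\mathcal{A}_1$ lets me lift $\alpha$ to that algebra; the WEP then provides enough room to represent everything on some $\mathcal{B}(\mathcal{K})$, where the Wittstock/Paulsen factorization of completely bounded maps is available. So I would first write $\mathcal{B}_1=\mathcal{C}/\mathcal{J}$ with $\mathcal{C}$ having the WEP and $q:\mathcal{C}\to\mathcal{B}_1$ the quotient $*$-homomorphism. Using the LP of $\mathcal{A}_1$, I would produce a completely bounded lift $\tilde\alpha:\mathcal{A}_1\to\mathcal{C}$ with $q\circ\tilde\alpha=\alpha$ and $\|\tilde\alpha\|_{cb}\le\|\alpha\|_{cb}$. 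Since the lifting property is phrased for u.c.p. maps, I would route $\alpha$ through Paulsen's $2\times 2$ trick: after normalizing $\|\alpha\|_{cb}\le1$, the map $\alpha$ corresponds to a u.c.p. map on an operator system in $M_2(\mathcal{A}_1)$; extend by Arveson to a u.c.p. map $M_2(\mathcal{A}_1)\to M_2(\mathcal{B}_1)=M_2(\mathcal{C})/M_2(\mathcal{J})$, lift it to $M_2(\mathcal{C})$ using that $M_2(\mathcal{A}_1)$ inherits the LP, and read off $\tilde\alpha$ from the $(1,2)$-corner. Note that $\tilde\alpha$ need not be a homomorphism, which is harmless below.

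Next I would represent $\mathcal{C}\subseteq\mathcal{B}(\mathcal{K})$ faithfully. The WEP of $\mathcal{C}$ means it is relatively weakly injective in $\mathcal{B}(\mathcal{K})$, so the inclusion induces an \emph{isometric} embedding $\mathcal{C}\otimes_{\max}\mathcal{B}_2\hookrightarrow\mathcal{B}(\mathcal{K})\otimes_{\max}\mathcal{B}_2$. Hence it is enough to bound $\tilde\alpha\otimes\varphi$ for the maximal norm of $\mathcal{B}(\mathcal{K})\otimes_{\max}\mathcal{B}_2$. Now $\tilde\alpha:\mathcal{A}_1\to\mathcal{B}(\mathcal{K})$ is completely bounded, so by the Wittstock factorization $\tilde\alpha(a)=V^*\pi(a)W$ for a $*$-representation $\pi:\mathcal{A}_1\to\mathcal{B}(\widehat{\mathcal{K}})$ and operators $V,W$ which, after rescaling, satisfy $\|V\|=\|W\|=\|\tilde\alpha\|_{cb}^{1/2}$. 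On the algebraic tensor product this yields the factorization $\tilde\alpha\otimes\varphi=(\theta\otimes\mathrm{id}_{\mathcal{B}_2})\circ(\pi\otimes\varphi)$, where $\theta(T)=V^*TW$.

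Two standard tensorization facts then close the estimate. First, $\pi\otimes\varphi$ is the tensor product of a $*$-representation and a u.c.p. map, hence extends to a u.c.p. (in particular contractive) map $\mathcal{A}_1\otimes_{\max}\mathcal{A}_2\to\mathcal{B}(\widehat{\mathcal{K}})\otimes_{\max}\mathcal{B}_2$. Second, the multiplier map $\theta$ is decomposable: the amplification $(T_{ij})\mapsto R^*(T_{ij})R$ with $R=\mathrm{diag}(V,W)$ is completely positive, exhibiting $\theta$ with $\|\theta\|_{\mathrm{dec}}\le\|V\|\,\|W\|=\|\tilde\alpha\|_{cb}$, and by Haagerup's theorem decomposable maps tensorize with the maximal tensor product, so $\|\theta\otimes\mathrm{id}_{\mathcal{B}_2}\|\le\|\theta\|_{\mathrm{dec}}$. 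Composing gives $\|(\tilde\alpha\otimes\varphi)(x)\|_{\mathcal{B}(\mathcal{K})\otimes_{\max}\mathcal{B}_2}\le\|\alpha\|_{cb}\,\|x\|_{\mathcal{A}_1\otimes_{\max}\mathcal{A}_2}$, hence the same bound in $\mathcal{C}\otimes_{\max}\mathcal{B}_2$ by the WEP isometry. Finally $q\otimes\mathrm{id}_{\mathcal{B}_2}:\mathcal{C}\otimes_{\max}\mathcal{B}_2\to\mathcal{B}_1\otimes_{\max}\mathcal{B}_2$ is a $*$-homomorphism, hence contractive, and $(q\otimes\mathrm{id})\circ(\tilde\alpha\otimes\varphi)=\alpha\otimes\varphi$, so $\|\alpha\otimes\varphi\|\le\|\alpha\|_{cb}$ on $\mathcal{A}_1\otimes_{\max}\mathcal{A}_2$. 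Running the whole argument with $M_n$-amplifications, where every constant is uniform in $n$, upgrades this to $\|\alpha\otimes\varphi\|_{cb}\le\|\alpha\|_{cb}$.

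I expect the main obstacle to be the first step: lifting a genuinely non-positive completely bounded map through the quotient while preserving the cb-norm, i.e. making the LP (stated for u.c.p. maps) do the required work via the $2\times2$ trick and the stability of the LP under passage to $M_2$. The decomposable-map tensorization and the WEP isometry are standard, but their whole purpose here is to bridge the gap created by moving to $\mathcal{B}(\mathcal{K})$ — where Wittstock applies but the maximal norm could a priori increase — and verifying that they combine with the correct constant (this is exactly what forces the symmetric normalization $\|V\|=\|W\|=\|\tilde\alpha\|_{cb}^{1/2}$) is the point demanding care.
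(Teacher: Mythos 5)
Your overall architecture is the right one and matches the proof the paper is relying on: write $\mathcal{B}_1=\mathcal{C}/\mathcal{J}$ with $\mathcal{C}$ WEP, lift $\alpha$ to a completely bounded map $\tilde\alpha:\mathcal{A}_1\to\mathcal{C}$ with $\|\tilde\alpha\|_{cb}\le\|\alpha\|_{cb}$, and then combine the Wittstock factorization of $\tilde\alpha$ in $\mathcal{B}(\mathcal{K})$ with Haagerup's tensorization of decomposable maps and the fact that the WEP inclusion $\mathcal{C}\otimes_{\max}\mathcal{B}_2\hookrightarrow\mathcal{B}(\mathcal{K})\otimes_{\max}\mathcal{B}_2$ is isometric. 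That second half is correct, including the symmetric normalization $\|V\|=\|W\|=\|\tilde\alpha\|_{cb}^{1/2}$ needed to get $\|\theta\|_{dec}\le\|V\|\,\|W\|$ from the $2\times 2$ amplification, and the final contraction $q\otimes\mathrm{id}$ on max tensor products.

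The genuine gap is in the lifting step, exactly where you anticipated trouble. Your mechanism is: pass to the Paulsen operator system in $M_2(\mathcal{A}_1)$, \emph{extend by Arveson} to a u.c.p.\ map $M_2(\mathcal{A}_1)\to M_2(\mathcal{B}_1)$, then lift using the LP of $M_2(\mathcal{A}_1)$. Arveson's extension theorem requires the \emph{target} to be injective, and $M_2(\mathcal{B}_1)$ is not injective for a general $\mathcal{B}_1$ (nor is its bidual); if you instead extend into the injective algebra $\mathcal{B}(P\mathcal{H}\oplus P\mathcal{H})$ you leave $M_2(\mathcal{B}_1)$ and can no longer lift through $M_2(\mathcal{C})\to M_2(\mathcal{B}_1)$. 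The LP, as defined, only lifts u.c.p.\ maps defined on the whole $C^*$-algebra, so the $2\times2$ trick does not by itself reduce the lifting of a completely bounded map to the lifting of a u.c.p.\ map; indeed it is not known that arbitrary cb maps lift with preserved cb norm under the LP. The statement you need is true precisely because $\alpha$ is a \emph{homomorphism}, and this is Pop's Lemma 2.3, which the paper proves in its Appendix: working in the universal representation, with $P$ the central projection giving $(\mathcal{B}/\mathcal{J})^{**}=P\mathcal{B}^{**}$, one writes $\alpha=S^{-1}\theta S$ with $\theta$ a $*$-homomorphism and $\|S\|\,\|S^{-1}\|=\|\alpha\|_{cb}$ (Haagerup--Hadwin--Wittstock), sets $T=S\oplus(I-P)$, observes that $\mathcal{J}$ sits as a selfadjoint ideal inside the nonselfadjoint algebra $T\mathcal{B}T^{-1}$ and that $\theta$ maps into the genuine $C^*$-subquotient $\mathcal{D}/\mathcal{J}$ with $\mathcal{D}=T\mathcal{B}T^{-1}\cap(T\mathcal{B}T^{-1})^*$; the u.c.p.\ LP then applies directly to $\theta$, and conjugating the u.c.p.\ lift back by $T$ produces $\tilde\alpha$ with $\|\tilde\alpha\|_{cb}\le\|T\|\,\|T^{-1}\|=\|\alpha\|_{cb}$. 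Substituting this lemma for your Arveson step makes your argument complete.
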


In this paper, we will consider the following generalized version of Kadison's Similarity Problem: If $\phi$ is a  bounded representation from a $C^*$-algebra $\mathcal{A}$ into a von Neumann algebra $\mathcal{M}$, does there exist an invertible element $S\in \mathcal{M}$ such that $S\phi(\cdot)S^{-1}$ is a $\ast$-homomorphism? This problem was firstly studied in~\cite{V-Z} and the authors proved that if $\phi$ is a bounded representation from a $C^*$-algebra into a finite von Neumann algebra, then $\phi$ is similar to a $\ast$-represenation. Along the line, Gilles Pisier proved the following remarkable result~(\cite{Pis} Theorem 1, ~\cite{Oz} Theorem 2.11).

\begin{thm}{\rm (Pisier)}\cite{Pis}\label{Pis}
A unital \(C^{*}\)-algebra \(\mathcal{A}\) is nuclear if and only if for every unital completely bounded homomorphism $\phi$ from $\mathcal{A}$ into an arbitrary von Neumann algebra $\mathcal{M}$ is similar to a $\ast$-representation, i.e.,  there exists an invertible operator \(S \in \mathcal{M}\) such that \(S\phi(\cdot)S^{-1}\) is a \(*\)-representation.
\end{thm}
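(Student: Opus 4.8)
The plan is to treat the two implications separately, the real content being that the word \emph{arbitrary} together with the demand $S\in\mathcal{M}$ is what forces nuclearity, whereas for nuclear $\mathcal{A}$ an amenable averaging automatically keeps the similarity inside $\mathcal{M}$. For the forward direction I would use that a nuclear $C^{*}$-algebra is amenable (Connes--Haagerup), so it admits a bounded approximate diagonal $(m_{\alpha})\subset \mathcal{A}\,\widehat{\otimes}\,\mathcal{A}$ with $a\cdot m_{\alpha}-m_{\alpha}\cdot a\to 0$ and $\pi(m_{\alpha})\to 1$. Given a unital completely bounded homomorphism $\phi:\mathcal{A}\to\mathcal{M}$, set $\bar\phi(a)=\phi(a^{*})^{*}$, which is again a unital cb homomorphism, and consider the bilinear map $(x,y)\mapsto\bar\phi(x)\phi(y)$; since $\phi,\bar\phi$ are cb into the common algebra $\mathcal{M}$, this factors through a bounded $\Phi:\mathcal{A}\,\widehat{\otimes}\,\mathcal{A}\to\mathcal{M}$ with $\|\Phi\|\le\|\phi\|_{cb}^{2}$. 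The key point is that $\Phi$ takes values in $\mathcal{M}$, so $Q_{\alpha}:=\Phi(m_{\alpha})\in\mathcal{M}$, and a weak-$*$ cluster point $Q\in\mathcal{M}$ satisfies $\bar\phi(a)Q=Q\phi(a)$ by the approximate-diagonal identity. Choosing the diagonal in the symmetric form $m_{\alpha}=\sum_{k}x_{k}\otimes x_{k}^{*}$ makes $Q\ge 0$, and with $S=Q^{1/2}$ a short computation (take adjoints of $Q\phi(a)=\phi(a^{*})^{*}Q$) shows that $S\phi(\cdot)S^{-1}$ is a $*$-representation; as $Q\in\mathcal{M}$ and $\mathcal{M}$ is weak-$*$ closed, $S\in\mathcal{M}$, as required.

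I expect the invertibility of $Q$ (equivalently that $S$ is bounded below) to require a small separate argument using unitality of $\phi$ and the upper bound $Q\le\|\phi\|_{cb}^{2}$: one either builds symmetrically a positive intertwiner in the opposite direction and checks the composition is bounded below, or invokes Paulsen's theorem to know that \emph{some} invertible similarity exists and matches it with $Q$. For the reverse direction I would encode amenability cohomologically. Let $\delta:\mathcal{A}\to X$ be a completely bounded derivation into a normal dual operator $\mathcal{A}$-bimodule, with the two module actions implemented by $*$-representations $\pi,\rho$, and realize $X$ as the upper-right corner of the linking von Neumann algebra $\mathcal{M}=\begin{pmatrix}\pi(\mathcal{A})'' & X\\ X^{*} & \rho(\mathcal{A})''\end{pmatrix}$. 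Then
\[
\phi_{\delta}(a)=\begin{pmatrix}\pi(a) & \delta(a)\\ 0 & \rho(a)\end{pmatrix}
\]
is a unital cb homomorphism of $\mathcal{A}$ into $M_{2}(\mathcal{M})$ (the homomorphism property is exactly the derivation identity, and complete boundedness of $\phi_{\delta}$ is equivalent to that of $\delta$). By hypothesis there is an invertible $S\in M_{2}(\mathcal{M})$ conjugating $\phi_{\delta}$ to a $*$-representation; a triangular analysis of $S$ then exhibits $\delta$ as inner, implemented by an element $\xi$ of the corner $X$. Here the requirement $S\in M_{2}(\mathcal{M})$, rather than merely $S\in M_{2}(\mathcal{B}(\mathcal{H}))$, is precisely what places $\xi$ inside $X$. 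Thus every cb derivation into a normal dual bimodule is inner, i.e.\ $\mathcal{A}$ is amenable, hence nuclear (Haagerup/Connes).

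The main obstacle is this last extraction: passing from ``conjugate to a $*$-representation by $S\in M_{2}(\mathcal{M})$'' to ``inner with implementing element in $X$'' is exactly the step that consumes the strengthened hypothesis, and it must be dovetailed with the precise bimodule realization so that the relevant off-diagonal corner of $S$ lands in $X$ rather than in some ambient $\mathcal{B}(\mathcal{H})$; without the localization $S\in\mathcal{M}$ one would only recover the (conjecturally always-true, hence non-restrictive) similarity property and could not conclude nuclearity. A secondary technical point is reconciling the completely bounded derivations produced above with the (a priori merely bounded) derivations in Haagerup's characterization of amenability; I would either argue directly that the derivations in play are cb or invoke the automatic complete boundedness of derivations of $C^{*}$-algebras into dual bimodules. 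Finally, I would need to confirm that the approximate-diagonal estimates are uniform enough for the weak-$*$ limit in the forward direction to preserve the intertwining identity, which is routine once the projective-tensor factorization of $\Phi$ is established.
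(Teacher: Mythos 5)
The paper never proves Theorem~\ref{Pis} itself (it is quoted from \cite{Pis}), but it does prove the forward implication as a standalone result (the theorem on nuclear $\mathcal{A}$ following Lemma~\ref{three equivalences}), and there your route is genuinely different. The paper uses $\min=\max$ on $\mathcal{A}\odot\mathcal{M}'$ to show that $\sum a_i\otimes x_i\mapsto\sum\phi(a_i)x_i$ is completely bounded, applies Theorem~\ref{Haagerup} to this enlarged representation, and then tests the intertwining relation against $1\otimes\mathcal{M}'$ to force the positive intertwiner into $\mathcal{M}''=\mathcal{M}$. You instead invoke Haagerup's theorem that nuclear algebras are amenable, with an approximate diagonal of the symmetric form $\sum_k x_k\otimes x_k^*$, and average $\bar\phi\cdot\phi$ over it; this is a legitimate alternative (essentially Haagerup's original averaging argument localized to $\mathcal{M}$), and the weak-$*$ cluster point $Q$ does land in $\mathcal{M}$. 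The invertibility point you flag is real but standard: normalizing $\|\sum_k x_kx_k^*\|\le 1$, row complete boundedness gives $\sum_k\phi(x_k)\phi(x_k)^*\le\|\phi\|_{cb}^2$, and since $\sum_k\phi(x_k)\phi(x_k^*)\to 1$, Cauchy--Schwarz yields $\|\xi\|^2\le\langle Q\xi,\xi\rangle^{1/2}\,\|\phi\|_{cb}\,\|\xi\|$, i.e.\ $Q\ge\|\phi\|_{cb}^{-2}$. So your forward half is correct, at the price of a heavier input (Connes--Haagerup amenability) than the paper's, which needs only the definition of nuclearity.

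The reverse direction contains a genuine gap. Innerness of \emph{completely bounded} derivations into normal dual operator bimodules does not give amenability: amenability requires all merely \emph{bounded} derivations into all dual Banach bimodules to be inner, and the ``automatic complete boundedness of derivations'' you hope to fall back on is not available --- by Christensen and Kirchberg it is equivalent to the derivation/similarity problem itself, which is open. Worse, Christensen's theorem says a derivation of a $C^*$-algebra into $\mathcal{B}(\mathcal{H})$ is inner \emph{iff} it is completely bounded, for every $C^*$-algebra; so without exploiting the localization quantitatively, your triangular construction detects nothing about $\mathcal{A}$. The proof that actually works (\cite{Pis}; Theorem~2.11 of \cite{Oz}) applies the hypothesis only to the \emph{inner} derivations $\delta_T(a)=\pi(a)T-T\pi(a)$ with $\|T\|\le1$, which are automatically completely bounded with $\|\delta_T\|_{cb}\le2$; the requirement $S\in M_2(\mathcal{M})$ with $\mathcal{M}=(\pi(\mathcal{A})\cup\{T\})''$ then re-implements $\delta_T$ by an element of $\mathcal{M}$ whose norm is bounded by a universal function of the cb norm, and a further compactness/averaging argument over all such $T$ produces a conditional expectation onto $\pi(\mathcal{A})'$, hence injectivity of $\pi(\mathcal{A})''$ for every representation $\pi$ and nuclearity by Choi--Effros--Connes. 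Your outline stops at ``the off-diagonal corner of $S$ lands in $X$,'' which is the correct first move, but the uniform norm control and the averaging to a conditional expectation --- the substantive part of the converse --- are missing, and the endpoint should be injectivity of commutants rather than Banach-algebra amenability.
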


 The following result is our main theorem.

\begin{thm}\label{Main}
Let \(\mathcal{A}\) be a unital separable \(C^{*}\)-algebra, and \(\mathcal{M}\) a von Neumann algebra with QWEP. Let \(\phi: \mathcal{A} \to \mathcal{M}\) be a unital completely bounded representation. Then there exists a positive invertible operator \(S \in \mathcal{M}\) such that \(S\phi(\cdot)S^{-1}\) is a \(*\)-representation.
\end{thm}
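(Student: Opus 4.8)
Before turning to the authors' argument, I outline the strategy I would follow.

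\smallskip
\emph{Reduction to a domain with the lifting property.} First I would remove the non-nuclearity obstruction by pulling $\phi$ back along a resolution. Since $\mathcal{A}$ is unital and separable, there is a unital surjective $\ast$-homomorphism $q\colon \mathcal{A}_{1}\twoheadrightarrow\mathcal{A}$ from a separable $C^{*}$-algebra $\mathcal{A}_{1}$ with the LP (for instance $\mathcal{A}_{1}=C^{*}(\mathbb{F}_{\infty})$). As $q$ is a unital $\ast$-homomorphism it is completely contractive, so $\phi_{1}:=\phi\circ q\colon\mathcal{A}_{1}\to\mathcal{M}$ is again a unital completely bounded representation with $\|\phi_{1}\|_{cb}\le\|\phi\|_{cb}$, and a positive invertible $S\in\mathcal{M}$ conjugating $\phi_{1}$ to a $\ast$-representation automatically conjugates $\phi$ to one, because $q$ is a surjective $\ast$-homomorphism. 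Hence I may assume from the outset that $\mathcal{A}$ has the LP, which is exactly the hypothesis needed to feed $\phi$ into Theorem~\ref{Pop}.

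\smallskip
\emph{Reformulation as a fixed point.} Writing $S=Q^{1/2}$ with $Q\in\mathcal{M}$ positive and invertible, a direct computation shows that $S\phi(\cdot)S^{-1}$ is a $\ast$-representation if and only if $\phi(u)^{*}Q\,\phi(u)=Q$ for every unitary $u\in\mathcal{A}$: indeed $\bigl(S\phi(u)S^{-1}\bigr)^{*}\bigl(S\phi(u)S^{-1}\bigr)=S^{-1}\phi(u)^{*}Q\phi(u)S^{-1}$, which equals $1$ precisely when $\phi(u)^{*}Q\phi(u)=Q$, and the unitaries span $\mathcal{A}$. Thus the goal becomes: produce a positive, invertible $Q\in\mathcal{M}$ that is a fixed point of the affine, positivity-preserving action $\beta_{u}(Q)=\phi(u)^{*}Q\phi(u)$ of the unitary group $U(\mathcal{A})$. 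Complete boundedness already guarantees such a fixed point in $\mathcal{B}(\mathcal{H})$: by Theorem~\ref{Haagerup} there is a positive invertible $S_{0}\in\mathcal{B}(\mathcal{H})$ with $S_{0}\phi(\cdot)S_{0}^{-1}$ a $\ast$-representation, so $Q_{0}=S_{0}^{2}$ is $\beta$-invariant and bounded below by $\|S_{0}^{-1}\|^{-2}$. The whole problem is to replace $Q_{0}\in\mathcal{B}(\mathcal{H})$ by an invariant element of $\mathcal{M}$.

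\smallskip
\emph{Transporting the fixed point into $\mathcal{M}$.} If $\mathcal{M}$ were injective there would be a normal conditional expectation $\mathcal{B}(\mathcal{H})\to\mathcal{M}$, and applying it to $Q_{0}$ (using that it is an $\mathcal{M}$-bimodule map and unital completely positive) would at once give an invariant $Q\in\mathcal{M}$ with $Q\ge\|S_{0}^{-1}\|^{-2}$. The crux is that QWEP is strictly weaker than injectivity, so no such expectation exists and the orbit of $Q_{0}$ cannot simply be averaged back into $\mathcal{M}$. My plan is instead to manufacture the invariant element one level up, in the bidual: writing $\phi^{\sharp}(a):=\phi(a^{*})^{*}$ and using that $\mathcal{M}$ is a quotient of a $C^{*}$-algebra with the weak expectation property, I would lift the unital completely positive Paulsen system $\bigl(\begin{smallmatrix}\psi_{1}&\phi\\ \phi^{\sharp}&\psi_{2}\end{smallmatrix}\bigr)\colon\mathcal{A}\to M_{2}(\mathcal{M})$ through the quotient (this is where the LP of $\mathcal{A}$ is used), transport it into the bidual via the weak expectation, and there produce a $\beta$-invariant positive $Q_{1}$ bounded below by some $\varepsilon>0$. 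The indispensable point, and the step I expect to be hardest, is that lifting destroys multiplicativity, so the invariance equation $\phi(u)^{*}Q_{1}\phi(u)=Q_{1}$ is not automatic; reconciling the genuinely multiplicative homomorphism $\phi$ with the merely completely positive lifted data is precisely what Theorem~\ref{Pop} controls, by bounding the completely bounded norm of $\phi\otimes\varphi$ on the maximal tensor product $\mathcal{A}\otimes_{\max}\mathcal{A}_{2}$ when $\mathcal{A}$ has the LP and $\mathcal{M}$ is QWEP. This is the common engine of both proofs.

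\smallskip
\emph{Descending to $\mathcal{M}$ and finishing.} Finally I would push $Q_{1}$ down through the canonical normal conditional expectation $P\colon\mathcal{M}^{**}\to\mathcal{M}$. Since $\phi(u),\phi^{\sharp}(u)\in\mathcal{M}$ and $P$ is an $\mathcal{M}$-bimodule map, one has $P\bigl(\phi(u)^{*}Q_{1}\phi(u)\bigr)=\phi(u)^{*}P(Q_{1})\phi(u)$, so $Q:=P(Q_{1})\in\mathcal{M}$ remains $\beta$-invariant; and as $P$ is unital and positive with $Q_{1}\ge\varepsilon 1$ we get $Q\ge\varepsilon 1$, whence $Q$ is positive and invertible. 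Then $S=Q^{1/2}\in\mathcal{M}$ is positive and invertible, and $S\phi(\cdot)S^{-1}$ is a $\ast$-representation by the reformulation above, which is the assertion of Theorem~\ref{Main}.
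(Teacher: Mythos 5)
Your reduction to $\mathcal{A}=C^{*}(F_{\infty})$ and your reformulation of the goal as finding a positive invertible $Q\in\mathcal{M}$ with $\phi(u)^{*}Q\phi(u)=Q$ for all unitaries $u$ (equivalently, $Q\phi(x)=\phi(x^{*})^{*}Q$, which is condition (3) of Lemma~\ref{three equivalences}) are both correct and match the paper. The gap is in the heart of the argument: you never actually produce the invariant element inside $\mathcal{M}$ (or $\mathcal{M}^{**}$). The sketched route --- lift a u.c.p.\ Paulsen system $\left(\begin{smallmatrix}\psi_{1}&\phi\\ \phi^{\sharp}&\psi_{2}\end{smallmatrix}\right):\mathcal{A}\to M_{2}(\mathcal{M})$ through the quotient, transport it via the weak expectation, and ``there produce a $\beta$-invariant $Q_{1}$'' --- is circular as stated, because the existence of such a u.c.p.\ system with $\psi_{1},\psi_{2}$ valued in $\mathcal{M}$ is already equivalent to the desired conclusion (this is the equivalence of conditions (1) and (4) in the paper's theorem characterizing similarity inside $\mathcal{M}$); and even granting its existence, no mechanism is given for extracting a $\beta$-invariant positive element from merely completely positive, non-multiplicative lifted data. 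You correctly sense that Theorem~\ref{Pop} must be the engine, but you never specify the tensor-product data $(\mathcal{A}_{2},\mathcal{B}_{2},\varphi)$ to which it is applied, and that choice is precisely the missing idea.

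The paper's resolution is the classical commutant trick, which bypasses the bidual entirely. Represent $\mathcal{M}\subseteq\mathcal{B}(\mathcal{H})$ faithfully and apply Theorem~\ref{Pop} with $\alpha=\phi\circ q:C^{*}(F_{\infty})\to\mathcal{M}$ and $\varphi=\mathrm{id}:\mathcal{M}'\to\mathcal{M}'$ (the LP of $C^{*}(F_{\infty})$ and the QWEP of $\mathcal{M}$ are exactly the hypotheses needed); composing with the $*$-representation $\mathcal{M}\otimes_{\max}\mathcal{M}'\to\mathcal{B}(\mathcal{H})$, $x\otimes b\mapsto xb$, yields a completely bounded representation $\Phi$ of $C^{*}(F_{\infty})\otimes_{\max}\mathcal{M}'$ on $\mathcal{H}$ satisfying $\Phi(1\otimes b)=b$. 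Theorem~\ref{Haagerup} and Lemma~\ref{three equivalences} then give a positive invertible $T\in\mathcal{B}(\mathcal{H})$ with $T\Phi(\cdot)=\Phi((\cdot)^{*})^{*}T$; evaluating at $1\otimes b$ forces $Tb=bT$ for all $b\in\mathcal{M}'$, hence $T\in\mathcal{M}''=\mathcal{M}$, and evaluating at $x\otimes 1$ gives the intertwining relation for $\phi$. Enlarging the representation so that the similarity is automatically implemented by an operator commuting with $\mathcal{M}'$ is the single step your proposal lacks, and without it (or a fully worked-out substitute) the proof does not go through.
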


Combining Theorem~\ref{Pis} and Theorem~\ref{Main}, the existence of non nuclear unital separable $C^*$-algebras implies that there exist von Neumann algebras which are not QWEP. This provides a negative answer to Connes Embedding Problem due to Kirchberg's work. The Connes Embedding Problem~\cite{Connes}  states: every tracial von Neumann algebra embeds (in a trace-preserving way) in an ultrapower $\mathcal{R}^w$  of the hyperfinite ${\rm II}_1$ factor $\mathcal{R}$.  The Connes Embedding Problem is considered as one of the
most important open problems in the field of operator algebras.  It
turns out, most notably by Kirchberg's seminal work~\cite{Kir}, that the Connes Embedding Problem is equivalent to a variety of other important conjectures (see Proposition 13.1 of~\cite{pisier}), which touches
most of the subfields of operator algebras, and also some other branches of mathematics
such as noncommutative real algebraic geometry  and quantum information theory. Among these conjectures is the QWEP conjecture: does every von Neumann algebra have QWEP property (see Theorem 14.1 of~\cite{pisier}). In~\cite{Ji}, the authors proved a landmark
theorem in quantum complexity theory known as ${\rm MIP}^*={\rm RE}$. As an application, they proved that the answer to the Connes Embedding Problem is false. In this paper, we give an ``operator algebraic" proof of this result.

This paper is organized as follows. Section 2 contains some preliminary results. In section 3, we prove the main theorem. In section 4, we give another approach to prove the main theorem under the assumption that $\mathcal{M}$ is separable (with separable predual). Section 5 contains some ideas might be useful to attack the original Kadison's similarity problem. For the theory of operator spaces, $C^*$-algebras and von Neumann algebras, we refer to the books~\cite{pisier,B-O,Kadison}.

\section{Preliminary results}

For a proof of the following theorem, we refer to~\cite{Kadison}.
\begin{thm}\label{Kadison}
Let \(\mathcal{A}\) and \(\mathcal{B}\) be \(C^{*}\)-algebras. Then there exists a \(C^{*}\)-norm $\displaystyle{\otimes}_{\max}$ on \(\mathcal{A} \odot \mathcal{B}\) such that for every \(s \in \mathcal{A} \odot \mathcal{B}\) and for every \(C^{*}\)-seminorm \(\alpha\) on \(\mathcal{A} \odot \mathcal{B}\), we have \(\|s\|_{\max} \ge \alpha(s)\). Moreover, if \(s = \sum_{j=1}^n a_j \otimes b_j\), then
\[
\|s\|_{\max} = \sup \Bigl\| \sum_{j=1}^n \varphi(a_j) \psi(b_j) \Bigr\|,
\]
where the supremum is taken over all commuting pairs \((\varphi, \psi)\) of \(*\)-representations of \(\mathcal{A}\) and \(\mathcal{B}\).
\end{thm}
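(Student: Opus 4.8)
The plan is to realize $\|\cdot\|_{\max}$ as the pointwise supremum of all $C^{*}$-seminorms on the algebraic tensor product $\mathcal{A}\odot\mathcal{B}$ and then to identify that supremum with the representation-theoretic formula, the bridge being the fact that every $*$-representation of $\mathcal{A}\odot\mathcal{B}$ splits into a commuting pair. First I would recall the dictionary between $C^{*}$-seminorms and representations: given a $C^{*}$-seminorm $\alpha$ on the $*$-algebra $\mathcal{A}\odot\mathcal{B}$, one quotients by its null ideal $N_{\alpha}=\{s:\alpha(s)=0\}$, completes the quotient to a $C^{*}$-algebra, and applies Gelfand--Naimark to obtain a nondegenerate $*$-representation $\pi_{\alpha}$ on a Hilbert space with $\alpha(s)=\|\pi_{\alpha}(s)\|$; conversely, any nondegenerate $*$-representation $\pi$ furnishes the $C^{*}$-seminorm $s\mapsto\|\pi(s)\|$.

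The key technical step is to decompose a nondegenerate representation $\pi$ of $\mathcal{A}\odot\mathcal{B}$ on $\mathcal{H}$ into commuting $*$-representations $\varphi:\mathcal{A}\to B(\mathcal{H})$ and $\psi:\mathcal{B}\to B(\mathcal{H})$ with $\pi(a\otimes b)=\varphi(a)\psi(b)$. Fixing approximate units $(e_{\lambda})\subset\mathcal{A}$ and $(f_{\mu})\subset\mathcal{B}$, I would set $\varphi(a)=\mathrm{SOT}\text{-}\lim_{\mu}\pi(a\otimes f_{\mu})$ and $\psi(b)=\mathrm{SOT}\text{-}\lim_{\lambda}\pi(e_{\lambda}\otimes b)$. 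Existence of these limits on the dense set $\mathrm{span}\,\pi(\mathcal{A}\odot\mathcal{B})\mathcal{H}$ follows from $\pi(a\otimes f_{\mu})\pi(c\otimes d)\eta=\pi(ac\otimes f_{\mu}d)\eta\to\pi(ac\otimes d)\eta$, while the uniform bound $\|\pi(a\otimes f_{\mu})\|\le\|a\|$ lets one extend to all of $\mathcal{H}$; one then checks multiplicativity, $*$-preservation, the factorization $\pi(a\otimes b)=\varphi(a)\psi(b)$, and the commutation $\varphi(a)\psi(b)=\psi(b)\varphi(a)$, all by reducing to products of elementary tensors and passing to the limit. In the unital case this collapses to $\varphi(a)=\pi(a\otimes 1)$ and $\psi(b)=\pi(1\otimes b)$, with commutation immediate from $(a\otimes 1)(1\otimes b)=(1\otimes b)(a\otimes 1)$ in $\mathcal{A}\odot\mathcal{B}$.

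With this factorization available, for any $C^{*}$-seminorm $\alpha$ and $s=\sum_{j=1}^{n}a_{j}\otimes b_{j}$ I would write $\alpha(s)=\|\sum_{j}\varphi(a_{j})\psi(b_{j})\|\le\sum_{j}\|a_{j}\|\,\|b_{j}\|$, using that $*$-homomorphisms are contractive; this uniform bound shows that $\|s\|_{\max}:=\sup_{\alpha}\alpha(s)$ is finite. That this supremum is itself a $C^{*}$-seminorm is routine: a pointwise supremum of seminorms is a seminorm, submultiplicativity is inherited since $\alpha(st)\le\alpha(s)\alpha(t)\le\|s\|_{\max}\|t\|_{\max}$, and the $C^{*}$-identity follows from $\sup_{\alpha}\alpha(s)^{2}=(\sup_{\alpha}\alpha(s))^{2}$ together with $\alpha(s^{*}s)=\alpha(s)^{2}$. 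It is a genuine norm because it dominates the spatial (minimal) $C^{*}$-norm, which separates points. By construction $\|s\|_{\max}\ge\alpha(s)$ for every $C^{*}$-seminorm $\alpha$, which is the asserted domination property.

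Finally, the explicit formula drops out of the correspondence: every $\alpha$ equals $\|\pi_{\alpha}(\cdot)\|$ for a representation that factors as a commuting pair $(\varphi,\psi)$, so $\alpha(s)=\|\sum_{j}\varphi(a_{j})\psi(b_{j})\|$, and conversely every commuting pair $(\varphi,\psi)$ of $*$-representations defines the representation $a\otimes b\mapsto\varphi(a)\psi(b)$ and hence a $C^{*}$-seminorm. Taking suprema over the same index set therefore yields $\|s\|_{\max}=\sup\|\sum_{j}\varphi(a_{j})\psi(b_{j})\|$ over all commuting pairs. The main obstacle I anticipate is precisely the decomposition step in the non-unital setting — verifying that the $\mathrm{SOT}$-limits converge, define $*$-homomorphisms, and have commuting ranges — since everything else is formal once the split $\pi(a\otimes b)=\varphi(a)\psi(b)$ is in hand.
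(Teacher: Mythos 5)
The paper itself offers no proof of this theorem: it simply cites Kadison--Ringrose, and your outline (realize $\|\cdot\|_{\max}$ as the supremum of all $C^*$-seminorms, use the seminorm/representation dictionary, and split every nondegenerate representation of $\mathcal{A}\odot\mathcal{B}$ into a commuting pair) is exactly the architecture of the standard proof in that reference. The purely formal parts you describe --- the $C^*$-identity for the supremum, submultiplicativity, domination of the spatial norm to get a genuine norm, and the final identification of the two suprema --- are all fine.

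There is, however, one genuine gap, and it sits exactly at the point you flagged: as written, your decomposition step is circular. For the representation $\pi_\alpha$ attached to an arbitrary $C^*$-seminorm $\alpha$ you have, by construction, $\|\pi_\alpha(a\otimes f_\mu)\|=\alpha(a\otimes f_\mu)$, so the ``uniform bound'' $\|\pi_\alpha(a\otimes f_\mu)\|\le\|a\|$ that you invoke to extend $\varphi(a)$ from the dense subspace is precisely the subcross inequality $\alpha(a\otimes b)\le\|a\|\,\|b\|$ --- which is what your whole argument is designed to prove (you derive it \emph{from} the decomposition, but the decomposition's extension step assumes it). The same issue infects the SOT convergence $\pi(ac\otimes f_\mu d)\eta\to\pi(ac\otimes d)\eta$: this needs continuity of $y\mapsto\pi(x\otimes y)\eta$, again a norm bound you do not yet possess. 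Both points are repaired by positivity rather than by any a priori norm estimate. First, for fixed $x\in\mathcal{A}$ and $\eta\in\mathcal{H}$, the functional $z\mapsto\langle\pi(x^*x\otimes z)\eta,\eta\rangle$ on $\mathcal{B}$ is positive, since $x^*x\otimes z=(x\otimes z^{1/2})^*(x\otimes z^{1/2})$; positive functionals on a $C^*$-algebra are automatically bounded, which yields $\|\pi(x\otimes y)\eta\|\le C_{x,\eta}\|y\|$ and hence the required convergence, so that $\varphi(a)\pi(c\otimes d)\eta=\pi(ac\otimes d)\eta$ is well defined on $D=\mathrm{span}\,\pi(\mathcal{A}\odot\mathcal{B})\mathcal{H}$. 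Second, to bound $\varphi(a)$ on $D$, check that $\varphi$ extends to a unital multiplicative $*$-preserving action of the unitization $\widetilde{\mathcal{A}}$ on $D$ (a direct computation with $\langle\varphi(a)v,w\rangle=\langle v,\varphi(a^*)w\rangle$ for $v,w\in D$), write $\|a\|^2 1-a^*a=c^*c$ with $c\in\widetilde{\mathcal{A}}$, and conclude $\|a\|^2\|v\|^2-\|\varphi(a)v\|^2=\|\varphi(c)v\|^2\ge 0$. This gives contractivity on $D$ with no circularity, the extension to $\mathcal{H}$ follows, and the rest of your argument then goes through verbatim.
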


The following definition was introduced by C. Lance~\cite{Lan}.
\begin{defn}
Let \(\mathcal{A}\) be a \(C^{*}\)-algebra and \(\pi:\mathcal{A} \to \mathcal{B}(\mathcal{H}\)) a faithful unital \(*\)-representation. We say that \(\mathcal{A}\) has the \emph{weak expectation property} (WEP) if there exists a unital completely positive (u.c.p.) map \(\varphi: \mathcal{B}(\mathcal{H}) \to \pi(\mathcal{A})''\) such that \(\varphi(\pi(a)) = \pi(a)\) for every \(a \in \mathcal{A}\).
\end{defn}

\begin{defn}
Let \(\mathcal{A}\) be a \(C^{*}\)-algebra. We say that \(\mathcal{A}\) has the \emph{quotient weak expectation property} (QWEP) if \(\mathcal{A}\) is a quotient of a \(C^{*}\)-algebra with the WEP.
\end{defn}

We collect some properties of QWEP algebras~\cite{Ozawa}.
\begin{thm}\label{Ozawa1}
We have the following results.
\begin{enumerate}
    \item[{\rm (1)}] If \(\mathcal{A}_{i}\) is QWEP for all \(i\in I\), then so is \(\prod_{i\in I}\mathcal{A}_{i}\).
    \item[{\rm (2)}] If \(\mathcal{A}\subset \mathcal{B}\) is weakly cp complemented and \(\mathcal{B}\) is QWEP, then so is \(\mathcal{A}\).
    \item[{\rm (3)}] Let \((\mathcal{A}_{i})_{i\in I}\) be an increasing net of {\rm (possibly non-unital)} \(C^{*}\)-subalgebras in \(\mathcal{A}\) {\rm (resp. \(\mathcal{M}\))} whose union is dense in norm {\rm (resp. weak*)} topology. If all \(\mathcal{A}_{i}\) are QWEP, then so is \(\mathcal{A}\) {\rm (resp. \(\mathcal{M}\))}.
    \item[{\rm (4)}] A \(C^{*}\)-algebra \(\mathcal{A}\) is QWEP if and only if the second dual \(\mathcal{A}^{**}\) is QWEP.
    \item[{\rm (5)}] If \(\mathcal{A}\) is QWEP and \(\mathcal{B}\) is nuclear, then \(\mathcal{A}\otimes_{\min}\mathcal{B}\) is QWEP. If \(\mathcal{M}\) and \(\mathcal{N}\) are QWEP, then so is \(\mathcal{M}\bar{\otimes}\mathcal{N}\).
    \item[{\rm (6)}] If \(\mathcal{A}\) {\rm (resp. \(\mathcal{M}\))} is QWEP and \(\alpha\) is an action of an amenable group \(\Gamma\), then \(\Gamma\ltimes_{\alpha}\mathcal{A}\) {\rm (resp. \(\Gamma\ltimes_{\alpha}\mathcal{M}\))} is QWEP.
    \item[{\rm (7)}] The commutant \(\mathcal{M}^{\prime}\) is QWEP if and only if \(\mathcal{M}\) is QWEP.
    \item[{\rm (8)}] Let \(\mathcal{M}=\int^{\oplus}\mathcal{M}(\gamma)\,d\gamma\) be the direct integral of separable von Neumann algebras. Then, \(\mathcal{M}\) is QWEP if and only if \(\mathcal{M}(\gamma)\) are QWEP for almost every \(\gamma\).
\end{enumerate}
\end{thm}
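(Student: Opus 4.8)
The plan is to reduce all eight clauses to a small number of structural facts about the weak expectation property and then run soft arguments. I would first record two elementary inputs: (K1) the class of WEP algebras is closed under \(\ell^{\infty}\)-products and under passage to the bidual, so \(\mathcal{C}^{**}\) is a WEP (hence injective) von Neumann algebra whenever \(\mathcal{C}\) is WEP; and (K2) for any quotient \(\mathcal{A}=\mathcal{C}/\mathcal{J}\) the bidual \(\mathcal{A}^{**}\) is a central von Neumann direct summand of \(\mathcal{C}^{**}\), hence a quotient of it. Clause (1) is then immediate: writing \(\mathcal{A}_{i}=\mathcal{C}_{i}/\mathcal{J}_{i}\) with each \(\mathcal{C}_{i}\) WEP, the coordinatewise surjection \(\prod_{i}\mathcal{C}_{i}\twoheadrightarrow\prod_{i}\mathcal{A}_{i}\) presents the product as a quotient of the WEP algebra \(\prod_{i}\mathcal{C}_{i}\). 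The forward implication of clause (4) is equally quick from (K1)--(K2): if \(\mathcal{A}=\mathcal{C}/\mathcal{J}\) with \(\mathcal{C}\) WEP, then \(\mathcal{A}^{**}\) is a quotient of the WEP algebra \(\mathcal{C}^{**}\).

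The heart of the matter is a single von Neumann-level lemma, which I would isolate as the main technical step: if \(\mathcal{N}\subset\mathcal{M}\) are von Neumann algebras, \(\mathcal{M}\) is QWEP, and there is a normal conditional expectation \(E\colon\mathcal{M}\to\mathcal{N}\), then \(\mathcal{N}\) is QWEP. Granting this and the (deep) converse of clause (4), clause (2) follows by a bidual argument: a weak cp complementation \(\Phi\colon\mathcal{B}\to\mathcal{A}^{**}\) admits a normal u.c.p. extension \(\widetilde{\Phi}\colon\mathcal{B}^{**}\to\mathcal{A}^{**}\); since \(\widetilde{\Phi}\) is the identity on the weak*-dense subalgebra \(\mathcal{A}\), it is a normal conditional expectation onto \(\mathcal{A}^{**}\), and as \(\mathcal{B}^{**}\) is QWEP by forward (4) the lemma gives \(\mathcal{A}^{**}\) QWEP, whence \(\mathcal{A}\) is QWEP by the converse of (4). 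For clause (3) I would embed the dense union into a suitable \(C^{*}\)- or von Neumann ultraproduct \(\prod_{\omega}\mathcal{A}_{i}\); this ultraproduct is a quotient of \(\prod_{i}\mathcal{A}_{i}\), hence QWEP by (1), and the weak*-limit map \(\prod_{\omega}\mathcal{A}_{i}\to\mathcal{A}^{**}\) (resp. \(\to\mathcal{M}\)) exhibits the limit algebra as weakly cp complemented in it, so clause (2) applies.

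The remaining clauses are reductions to this machinery. For (5), WEP is stable under \(\otimes_{\min}\) with a nuclear algebra (where the two tensor norms agree), and this upgrades to the quotient level; the von Neumann statement about \(\mathcal{M}\bar\otimes\mathcal{N}\) is Kirchberg's theorem, for which the completely bounded tensor estimate of Theorem~\ref{Pop} is precisely the tool controlling the relevant norms on \(\otimes_{\max}\) (cf. Theorem~\ref{Kadison}). For (6), amenability of \(\Gamma\) yields an equivariant expectation realizing \(\Gamma\ltimes_{\alpha}\mathcal{M}\) as the range of a normal conditional expectation inside \(\mathcal{M}\,\bar\otimes\,\mathcal{B}(\ell^{2}\Gamma)\); the latter is QWEP by (5), since \(\mathcal{B}(\ell^{2}\Gamma)\) is injective, so the engine lemma concludes. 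For (7) I would reduce to the standard form of \(\mathcal{M}\) --- amplification \(\mathcal{M}\,\bar\otimes\,\mathcal{B}(\mathcal{K})\) preserves QWEP by (5) and compression to a corner is handled by (2) --- where \(\mathcal{M}'=J\mathcal{M}J\cong\mathcal{M}^{\mathrm{op}}\); since WEP, quotients, and hence QWEP are all stable under passing to the opposite algebra, \(\mathcal{M}'\) is QWEP iff \(\mathcal{M}\) is. Finally, for (8) I would combine a measurable field of generators with an approximation of \(\int^{\oplus}\mathcal{M}(\gamma)\,d\gamma\) by the product \(\prod_{\gamma}\mathcal{M}(\gamma)\), invoking (1), (2) and (3).

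The genuine obstacle, on which essentially all of (2), (3), (6) and (8) rest, is the von Neumann-level engine lemma together with the converse of clause (4): a \emph{quotient-of-WEP} presentation of \(\mathcal{M}\) cannot be pushed through a conditional expectation \(E\) directly, because \(E\) is only completely positive and not a \(*\)-homomorphism, so one cannot simply compose with the defining surjection. Overcoming this requires Kirchberg's analysis of WEP through tensor norms --- that a cp-complemented von Neumann subalgebra inherits the relevant \(\otimes_{\max}=\otimes_{\min}\) equality against \(C^{*}(\mathbb{F}_{\infty})\) --- rather than any formal manipulation of quotient maps. The reduction to standard form in (7) is a secondary subtlety, since the QWEP of a commutant is a priori a spatial condition and one must verify that it is independent of the chosen faithful normal representation.
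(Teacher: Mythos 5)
A preliminary point of comparison: the paper itself offers no proof of this theorem. It is stated as a collection of known results with a citation to Ozawa's survey \cite{Ozawa}, where these items (essentially all due to Kirchberg) appear as Proposition 4.1 and its companions, so your attempt has to be judged against that literature rather than against an argument in the paper. Parts of your outline do follow the standard reductions (clause (1) via products of WEP algebras and norm-preserving lifts; the opposite-algebra and standard-form observations in (7)). But your foundation contains a genuine error: input (K1), the claim that WEP passes to biduals, is false. For a von Neumann algebra, WEP is equivalent to injectivity (a weak expectation relative to a faithful normal representation is a conditional expectation), and by the Choi--Effros--Connes theorem $\mathcal{C}^{**}$ is injective if and only if $\mathcal{C}$ is nuclear; hence any non-nuclear WEP algebra, for instance $\mathcal{C}=\mathcal{B}(\ell^{2})$, has a bidual that fails WEP. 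Consequently your ``quick'' proof of the forward half of (4) collapses. What is true is only that $\mathcal{C}^{**}$ is QWEP when $\mathcal{C}$ has WEP, and that is precisely the hard technical core of (4) (Ozawa's Lemma: if $\varphi\colon \mathcal{B}(\mathcal{H})\to\mathcal{A}''$ is u.c.p. and restricts to the identity on the weak*-dense subalgebra $\mathcal{A}$, then $\mathcal{A}''$ is QWEP); it requires Kirchberg's tensor-norm machinery, not formal bidual manipulations.

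This error also inverts where the difficulty of (4) sits, and it makes your skeleton circular. The direction you ``grant'' as deep ($\mathcal{A}^{**}$ QWEP $\Rightarrow$ $\mathcal{A}$ QWEP) is the trivial one: $\mathcal{A}$ is weakly cp complemented in $\mathcal{A}^{**}$ (take the identity map of $\mathcal{A}^{**}$), so it is immediate from (2); the direction you call quick is the deep one. Moreover, you deduce (2) from the unproven converse of (4) together with the unproven engine lemma, while both of those are naturally deduced from (2) (compose a normal expectation $E\colon\mathcal{M}\to\mathcal{N}$ with $\mathcal{N}\hookrightarrow\mathcal{N}^{**}$ to get weak cp complementation), so nothing in the proposal is actually proved unless those black boxes are established independently --- and that is where all the content of the theorem lies. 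A second concrete failure of the same kind: in the von Neumann half of (3), your weak*-limit map $\prod_{\omega}\mathcal{A}_i\to\mathcal{M}$ does not exhibit $\mathcal{M}$ as weakly cp complemented in the ultraproduct, because $\mathcal{M}$ is not a subalgebra of $\prod_{\omega}\mathcal{A}_i$; the map is only the identity on the weak*-dense subalgebra $\bigcup_i\mathcal{A}_i$, so clause (2) does not apply and one again needs the Kirchberg--Ozawa lemma above. In short, the proposal reduces all eight clauses to statements that are either false (K1) or deferred (the engine lemma and the hard direction of (4)), and the soft arguments given cannot recover them.
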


\begin{defn}
Let \(\mathcal{A}\) and \(\mathcal{B}\) be \(C^{*}\)-algebras, and let \(\mathcal{J}\) be a closed two-sided ideal of \(\mathcal{B}\). We say that \(\mathcal{A}\) has the \emph{lifting property} (LP) if for every u.c.p. map \(\varphi: \mathcal{A} \to \mathcal{B}/\mathcal{J}\), there exists a u.c.p. map \(\psi: \mathcal{A} \to \mathcal{B}\) such that \(\varphi = \pi \circ \psi\), where \(\pi: \mathcal{B} \to \mathcal{B}/\mathcal{J}\) is the quotient map.
\end{defn}

For a proof of the following result, we refer to~\cite{Ozawa}.
\begin{thm}\label{Ozawa}
The full \(C^{*}\)-algebra \(C^{*}(F_\infty)\) of a countable free group \(F_\infty\) has the LP.
\end{thm}

\section{Main theorem}

\begin{lem}\label{three equivalences}
Let \(\mathcal{A}\) be a \(C^{*}\)-algebra, \(\mathcal{M}\) a von Neumann algebra, and \(\phi: \mathcal{A} \to \mathcal{M}\) a completely bounded representation. Then the following conditions are equivalent:

\begin{enumerate}
    \item [{\rm (1)}]There exists an invertible operator \(h \in \mathcal{M}\) such that \(h\phi(\cdot)h^{-1}\) is a \(*\)-representation.
    \item [{\rm (2)}]There exists a positive invertible operator \(S \in \mathcal{M}\) such that \(S\phi(\cdot)S^{-1}\) is a \(*\)-representation.
    \item [{\rm (3)}]There exists a positive invertible operator \(T \in \mathcal{M}\) such that for every \(x \in \mathcal{A}\), \(T\phi(x) = \phi(x^{*})^{*}T\).
\end{enumerate}
\end{lem}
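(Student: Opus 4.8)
The plan is to prove the cycle of implications $(2) \Rightarrow (1) \Rightarrow (3) \Rightarrow (2)$, each of which reduces to a short algebraic manipulation; beyond the existence of square roots and inverses of positive invertible elements in a von Neumann algebra, no analytic input is needed. The implication $(2) \Rightarrow (1)$ is immediate, since a positive invertible operator is in particular an invertible operator, so the content lies in the two remaining steps.

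For $(1) \Rightarrow (3)$, suppose $h \in \mathcal{M}$ is invertible and $\psi := h\phi(\cdot)h^{-1}$ is a $*$-representation. Multiplicativity of $\psi$ is automatic, since conjugation preserves products and $\phi$ is a homomorphism, so the only content is the $*$-condition $\psi(x^{*}) = \psi(x)^{*}$ for all $x \in \mathcal{A}$. Writing this out gives
\[
h\phi(x^{*})h^{-1} = \bigl(h\phi(x)h^{-1}\bigr)^{*} = (h^{*})^{-1}\phi(x)^{*}h^{*},
\]
and multiplying on the left by $h^{*}$ and on the right by $h$ yields $h^{*}h\,\phi(x^{*}) = \phi(x)^{*}\,h^{*}h$. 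Setting $T := h^{*}h$, which is positive and invertible, and then replacing $x$ by $x^{*}$, we obtain $T\phi(x) = \phi(x^{*})^{*}T$ for every $x$, which is exactly condition $(3)$.

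For $(3) \Rightarrow (2)$, given a positive invertible $T$ with $T\phi(x) = \phi(x^{*})^{*}T$, I would set $S := T^{1/2}$, again positive and invertible, and check that $\psi := S\phi(\cdot)S^{-1}$ is a $*$-representation. As before only the $*$-condition needs verification. Since $S = S^{*}$, one has $\psi(x)^{*} = S^{-1}\phi(x)^{*}S$, so the identity $\psi(x^{*}) = \psi(x)^{*}$ is equivalent, after multiplying through by $S$ on both sides, to $T\phi(x^{*}) = \phi(x)^{*}T$; this is precisely the relation in $(3)$ with $x$ replaced by $x^{*}$, closing the cycle.

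I do not anticipate a genuine obstacle: the lemma is essentially a bookkeeping statement, and the only point demanding care is tracking adjoints together with the substitution symmetry between $T\phi(x^{*}) = \phi(x)^{*}T$ and $T\phi(x) = \phi(x^{*})^{*}T$. The value of the lemma for what follows is that later arguments need only produce any one of the three equivalent objects — most conveniently the intertwining relation $(3)$ — which can then be upgraded to the positive similarity $(2)$ required in the statement of Theorem~\ref{Main}.
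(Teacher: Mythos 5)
Your proposal is correct and follows essentially the same route as the paper: the cycle $(2)\Rightarrow(1)\Rightarrow(3)\Rightarrow(2)$, with $T=h^{*}h$ in the step $(1)\Rightarrow(3)$ and $S=T^{1/2}$ in the step $(3)\Rightarrow(2)$, exactly as in the paper's proof. The algebraic manipulations and the substitution $x\mapsto x^{*}$ are carried out correctly, so there is nothing to add.
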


\begin{proof}
(3) \(\Rightarrow\) (2). Suppose there exists a positive invertible operator \(T \in \mathcal{M}\) such that for every \(x \in \mathcal{A}\), \(T\phi(x) = \phi(x^{*})^{*}T\). Let \(S = T^{1/2}\). Then \(S \in \mathcal{M}\) and
\[
S\phi(x)S^{-1} = S^{-1}\phi(x^{*})^{*}S.
\]
Set \(\psi(x) = S\phi(x)S^{-1}\). Then
\[
\psi(x^{*}) = S\phi(x^{*})S^{-1} = S^{-1}\phi(x)^{*}S = (S\phi(x)S^{-1})^{*} = \psi(x)^{*},
\]
so \(S\phi(\cdot)S^{-1}\) is a \(*\)-representation.

(2) \(\Rightarrow\) (1) is obvious.

(1) \(\Rightarrow\) (3). Suppose there exists an invertible operator \(h \in \mathcal{M}\) such that \(h\phi(\cdot)h^{-1}\) is a \(*\)-representation. Set \(\psi(x) = h\phi(x)h^{-1}\). Then
\[
h\phi(x^{*})h^{-1} = \psi(x^{*}) = \psi(x)^{*} = (h^{*})^{-1}\phi(x)^{*}h^{*}.
\]
which implies
\[
h^*h\phi(x^*) = \phi(x)^* h^* h.
\]
Replacing \(x\) by \(x^*\) yields
\[
h^*h\phi(x) = \phi(x^*)^* h^* h.
\]
Let \(T = h^*h\). Then the positive operator \(T \in \mathcal{M}\). Moreover, since \(T = h^*h\) is invertible, there exists a positive invertible operator \(T\in\mathcal{M}\) such that \(T\phi(x) = \phi(x^*)^* T\).
\end{proof}

\begin{defn}
A von Neumann algebra \(\mathcal{M} \subseteq \mathcal{B}(\mathcal{H})\) is called \emph{injective} if there exists a conditional expectation \(E: \mathcal{B}(\mathcal{H}) \to \mathcal{M}\).
\end{defn}

\begin{thm}
Let \(\mathcal{A}\) be a \(C^*\)-algebra, \(\mathcal{M}\) an injective von Neumann algebra, and \(\phi: \mathcal{A} \to \mathcal{M}\) a completely bounded representation. Then there exists a positive invertible operator \(S \in \mathcal{M}\) such that \(S\phi(\cdot)S^{-1}\) is a \(*\)-representation.
\end{thm}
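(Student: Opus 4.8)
The plan is to first solve the similarity problem at the level of $\mathcal{B}(\mathcal{H})$ using the already-stated Haagerup--Hadwin--Wittstock theorem, and then to \emph{average} the resulting intertwiner back into $\mathcal{M}$ using the conditional expectation supplied by injectivity. The crucial structural fact I will exploit is that a conditional expectation $E:\mathcal{B}(\mathcal{H})\to\mathcal{M}$ is an $\mathcal{M}$-bimodule map, i.e.\ $E(axb)=aE(x)b$ whenever $a,b\in\mathcal{M}$; combined with the observation that both $\phi(x)$ and $\phi(x^*)^*$ already lie in $\mathcal{M}$, this will let me transport the intertwining relation from $\mathcal{B}(\mathcal{H})$ into $\mathcal{M}$ without losing it.

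First I would view $\phi$ as a completely bounded representation of $\mathcal{A}$ on the Hilbert space $\mathcal{H}$ (with $\mathcal{M}\subseteq\mathcal{B}(\mathcal{H})$) and apply Theorem~\ref{Haagerup} to obtain an invertible operator $h\in\mathcal{B}(\mathcal{H})$ such that $h\phi(\cdot)h^{-1}$ is a $*$-representation. Running the argument of the implication $(1)\Rightarrow(3)$ of Lemma~\ref{three equivalences} inside $\mathcal{B}(\mathcal{H})$, the positive invertible operator $T=h^*h\in\mathcal{B}(\mathcal{H})$ then satisfies
\[
T\phi(x)=\phi(x^*)^*\,T\qquad\text{for all }x\in\mathcal{A}.
\]

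Next I would set $T_0=E(T)\in\mathcal{M}$ and apply $E$ to both sides of this identity. Since $\phi(x)\in\mathcal{M}$ sits on the right and $\phi(x^*)^*\in\mathcal{M}$ sits on the left, the bimodule property yields $E(T\phi(x))=E(T)\phi(x)=T_0\phi(x)$ and $E(\phi(x^*)^*T)=\phi(x^*)^*E(T)=\phi(x^*)^*T_0$, so that
\[
T_0\phi(x)=\phi(x^*)^*\,T_0\qquad\text{for all }x\in\mathcal{A}.
\]
It remains to check that $T_0$ is positive and invertible: positivity is immediate because $E$ is completely positive, and invertibility follows because $T\ge c\cdot 1$ for some $c>0$ forces $T_0=E(T)\ge c\,E(1)=c\cdot 1$ by unitality of $E$. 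Thus $T_0$ verifies condition~(3) of Lemma~\ref{three equivalences} with $T_0\in\mathcal{M}$, and the implication $(3)\Rightarrow(2)$ produces the desired positive invertible $S=T_0^{1/2}\in\mathcal{M}$ with $S\phi(\cdot)S^{-1}$ a $*$-representation.

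The argument is short because injectivity does all the work through the single averaging step; the point I would flag as the conceptual crux (rather than a genuine obstacle) is recognizing that one must phrase the intertwiner through condition~(3) of Lemma~\ref{three equivalences} instead of through the similarity $h$ itself. The multiplicative similarity $h\phi(\cdot)h^{-1}$ is \emph{not} preserved by the non-multiplicative map $E$, whereas the one-sided intertwining relation $T\phi(x)=\phi(x^*)^*T$ \emph{is}, precisely because its two outer factors already live in $\mathcal{M}$. If $\phi$ is not \emph{a priori} non-degenerate, I would first compress to its essential subspace, or simply assume $\phi$ unital as in the main theorem, so that Theorem~\ref{Haagerup} applies.
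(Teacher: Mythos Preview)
Your proposal is correct and follows essentially the same argument as the paper's own proof: apply Theorem~\ref{Haagerup} in $\mathcal{B}(\mathcal{H})$, pass via Lemma~\ref{three equivalences} to a positive invertible intertwiner $T$ satisfying $T\phi(x)=\phi(x^*)^*T$, push $T$ into $\mathcal{M}$ by applying the conditional expectation $E$ and using its $\mathcal{M}$-bimodule property, and then take the square root. Your explicit remark that one must work with the one-sided intertwining relation of condition~(3) rather than with the similarity $h$ itself (since $E$ is not multiplicative) is exactly the point, and your treatment of positivity and invertibility of $E(T)$ via $T\ge c\cdot 1$ matches the paper's.
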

\begin{proof}
Without loss of generality, assume that \(\mathcal{M}\) acts faithfully on a Hilbert space \(\mathcal{H}\), so that \(\phi\) is a completely bounded representation of the \(C^*\)-algebra \(\mathcal{A}\) on \(\mathcal{H}\). By Theorem \ref{Haagerup}, \(\phi\) is similar to a \(*\)-representation in $\mathcal{B}(\mathcal{H})$.  By Lemma \ref{three equivalences}, there exists a positive invertible operator \(h \in \mathcal{B}(\mathcal{H})\) such that
\[
h\phi(x) = \phi(x^*)^* h.
\]
Since \(\mathcal{M}\) is an injective von Neumann algebra, there exists a conditional expectation \(E: \mathcal{B}(\mathcal{H}) \to \mathcal{M}\). Let \(T = E(h) \in \mathcal{M}\). Since \(h\) is a positive invertible bounded operator, there exist constants \(0 < \alpha < \beta\) such that \(\alpha I \leq h \leq \beta I\). Hence,
\[
\alpha I \leq T = E(h) \leq \beta I.
\]
So \(T\) is also an invertible positive bounded operator. Moreover,
\[
T\phi(x) = E(h\phi(x)) = E(\phi(x^*)^* h) = \phi(x^*)^* T, \quad \forall x \in \mathcal{A}.
\]
By Lemma \ref{three equivalences}, there exists a positive invertible operator \(S = T^{1/2} \in \mathcal{M}\) such that \(S\phi(\cdot)S^{-1}\) is a \(*\)-representation.
\end{proof}

The following result is well known. We include a proof for completeness.
\begin{thm}
Let \(\mathcal{A}\) be a nuclear \(C^{*}\)-algebra, \(\mathcal{M}\) a von Neumann algebra, and \(\phi: \mathcal{A} \to \mathcal{M}\) a bounded representation. Then there exists a positive invertible operator \(S \in \mathcal{M}\) such that \(S\phi(\cdot)S^{-1}\) is a \(*\)-representation.
\end{thm}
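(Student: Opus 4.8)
The plan is to reduce the problem, via Lemma~\ref{three equivalences}, to producing a single positive invertible $T\in\mathcal M$ with $T\phi(x)=\phi(x^{*})^{*}T$ for all $x\in\mathcal A$, and to obtain such a $T$ by an amenability averaging that keeps the operator inside $\mathcal M$. We may assume $\phi$ is unital (replacing $\mathcal H$ by the essential subspace and adjoining a unit if necessary) and that $\mathcal M\subseteq\mathcal B(\mathcal H)$. First I would invoke the classical theorem of Bunce and Christensen~\cite{Bunce,Christensen}, cited in the introduction: since $\mathcal A$ is nuclear, $\phi$ is similar to a $*$-representation on $\mathcal H$. By Lemma~\ref{three equivalences} applied inside $\mathcal B(\mathcal H)$ this yields a positive invertible $k\in\mathcal B(\mathcal H)$, with $\alpha I\le k\le\beta I$ for some $0<\alpha<\beta$, such that $k\phi(x)=\phi(x^{*})^{*}k$ for all $x$. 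Writing $\psi(x):=\phi(x^{*})^{*}$, a bounded unital homomorphism with $\psi(\mathcal A)\subseteq\mathcal M$, this reads $k\phi(x)=\psi(x)k$. The operator $k$ need not lie in $\mathcal M$, and since $\mathcal M$ is not assumed injective we cannot compress $k$ by a conditional expectation as in the injective case; this is precisely where nuclearity of $\mathcal A$ must be used.

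The key is that nuclearity of $\mathcal A$ is equivalent to amenability, and the completely positive approximation property furnishes a \emph{symmetric} approximate diagonal: a bounded net $M_\lambda=\sum_j a_j^\lambda\otimes (a_j^\lambda)^{*}\in\mathcal A\odot\mathcal A$ with $\sum_j a_j^\lambda (a_j^\lambda)^{*}\to 1$ and $\|x\cdot M_\lambda-M_\lambda\cdot x\|\to 0$ for every $x$; the special form $b_j=a_j^{*}$ comes from applying the $*$-preserving u.c.p.\ maps $\beta_\lambda\colon M_{k(\lambda)}\to\mathcal A$ of the approximation to the matrix units, since $\beta_\lambda(e_{qp})=\beta_\lambda(e_{pq})^{*}$. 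I would then set
\[
\Theta_\lambda(X)=\sum_j \psi(a_j^\lambda)\,X\,\phi\big((a_j^\lambda)^{*}\big)=\sum_j \phi\big((a_j^\lambda)^{*}\big)^{*}\,X\,\phi\big((a_j^\lambda)^{*}\big),
\]
so that each $\Theta_\lambda$ is a completely positive map, and $\Theta_\lambda(X)\in\mathcal M$ whenever $X\in\mathcal M$. Taking $T$ to be a weak-$*$ cluster point of $T_\lambda:=\Theta_\lambda(1)=\sum_j \phi((a_j^\lambda)^{*})^{*}\phi((a_j^\lambda)^{*})\in\mathcal M$, the limit $T$ is positive, being a weak-$*$ limit of positive elements, and lies in $\mathcal M$.

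To see that $T$ intertwines and is invertible, I would use the two defining properties of $M_\lambda$. For the intertwining relation, $\psi(x)\Theta_\lambda(1)-\Theta_\lambda(1)\phi(x)$ is the image of $x\cdot M_\lambda-M_\lambda\cdot x$ under the bounded bilinear map $(a,b)\mapsto\psi(a)\phi(b)$, hence tends to $0$ in norm, so in the limit $\psi(x)T=T\phi(x)$, i.e.\ $T\phi(x)=\phi(x^{*})^{*}T$. For invertibility I would compare with $k$: using $k\phi((a_j^\lambda)^{*})=\psi((a_j^\lambda)^{*})k$ one gets $\Theta_\lambda(k)=\psi\big(\sum_j a_j^\lambda (a_j^\lambda)^{*}\big)k\to \psi(1)k=k$ in norm, while complete positivity of $\Theta_\lambda$ applied to $\alpha I\le k\le\beta I$ gives $\alpha\,\Theta_\lambda(1)\le \Theta_\lambda(k)\le\beta\,\Theta_\lambda(1)$. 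Passing to the limit yields $\alpha T\le k\le\beta T$, whence $T\ge\beta^{-1}k\ge(\alpha/\beta)I$, so $T$ is a positive invertible element of $\mathcal M$. Then condition (3) of Lemma~\ref{three equivalences} holds for $T$, and $S=T^{1/2}\in\mathcal M$ is a positive invertible operator with $S\phi(\cdot)S^{-1}$ a $*$-representation.

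The main obstacle is exactly the step forcing the averaged operator to be simultaneously inside $\mathcal M$ and invertible: the similarity $k$ in $\mathcal B(\mathcal H)$ is classical, and producing \emph{some} bounded intertwiner in $\mathcal M$ is routine from any approximate diagonal, but controlling positivity and a two-sided bound requires the \emph{symmetric, completely positive} form $M_\lambda=\sum_j a_j^\lambda\otimes(a_j^\lambda)^{*}$ supplied by the completely positive approximation property, together with the comparison $\Theta_\lambda(k)\to k$ that transfers the bounds on $k$ to bounds on $T$. Verifying that this symmetric approximate diagonal can be extracted from the u.c.p.\ factorizations with the multiplication condition correctly normalized is the one genuinely technical point.
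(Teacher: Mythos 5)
Your argument is correct in substance, but it takes a genuinely different route from the paper's. The paper also starts from Christensen's theorem to obtain a similarity in $\mathcal{B}(\mathcal{H})$, but then uses nuclearity in the form $\otimes_{\min}=\otimes_{\max}$ on $\mathcal{A}\odot\pi(\mathcal{A})'$ to show that $\Phi\bigl(\sum_i a_i\otimes x_i\bigr)=\sum_i\phi(a_i)x_i$ is a \emph{completely bounded} representation of $\mathcal{A}\otimes_{\min}\mathcal{M}'$; applying Theorem~\ref{Haagerup} to $\Phi$ and evaluating at $1\otimes x$ forces the positive invertible intertwiner to commute with $\mathcal{M}'$, hence to lie in $\mathcal{M}''=\mathcal{M}$. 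You instead use nuclearity through the Connes--Haagerup amenability theorem, averaging the $\mathcal{B}(\mathcal{H})$-intertwiner $k$ against a self-adjoint approximate diagonal so that the average $T$ lands in $\mathcal{M}$ by construction; your comparison $\alpha\,\Theta_\lambda(1)\le\Theta_\lambda(k)\le\beta\,\Theta_\lambda(1)$ together with $\Theta_\lambda(k)\to k$ is a clean way to transfer invertibility to the limit, and you avoid the tensor-with-the-commutant device altogether. The paper's route is more economical relative to its own toolkit (the same $\mathcal{A}\otimes\mathcal{M}'$ trick recurs in the QWEP argument), while yours is closer in spirit to the classical averaging proofs and makes the mechanism by which $T$ stays in $\mathcal{M}$ completely transparent. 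The one point you must repair is the provenance of the symmetric approximate diagonal: applying the u.c.p.\ maps $\beta_\lambda$ of a completely positive approximation to the matrix units does \emph{not} by itself give either $\sum_j a_j^\lambda(a_j^\lambda)^*\to 1$ or the approximate centrality $\|x\cdot M_\lambda-M_\lambda\cdot x\|\to 0$; producing such a net from the c.p.\ approximation property is exactly the hard content of Haagerup's theorem that nuclear $C^*$-algebras are amenable, with an approximate diagonal of the self-adjoint form $\sum_j a_j\otimes a_j^*$, and it should be cited as such rather than sketched. With that citation in place (and the observation, which you already have, that $\phi$ is completely bounded after the first step, so the $\Theta_\lambda$ are uniformly bounded and the relevant bilinear maps are continuous for the norm in which the diagonal is approximately central), your proof is complete.
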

\begin{proof}
Without loss of generality, assume that \(\mathcal{M}\) acts faithfully on a Hilbert space \(\mathcal{H}\), so that \(\phi\) is a bounded representation of the nuclear \(C^{*}\)-algebra \(\mathcal{A}\) on \(\mathcal{H}\). By~\cite{Christensen}, $\phi$ is similar to a $\ast$-representation in $\mathcal{B}(\mathcal{H})$, i.e., there exists a positive invertible operator \(T^{-1} \in \mathcal{B}(\mathcal{H})\) such that \(T^{-1}\phi(\cdot)T\) is a \(*\)-representation \(\pi(\cdot)\). Hence,
\[
\phi(a) = T\pi(a)T^{-1}, \quad \forall a \in \mathcal{A}.
\]
Define a representation of the $C^*$-algebra $\mathcal{A} \otimes_{\min} \mathcal{M}'$ on the Hilbert space $\mathcal{H}$ by
\[
\Phi: \mathcal{A} \otimes_{\min} \mathcal{M}' \longrightarrow \mathcal{B}(\mathcal{H}), \]
\[\Phi\Bigl(\sum_{i=1}^n a_i \otimes x_i\Bigr) := \sum_{i=1}^n \phi(a_i)x_i \quad \forall a_i \in \mathcal{A}, x_i \in \mathcal{M}'.
\]
$\Phi$ is well-defined. Note that $\phi(a_i)x_i = x_i\phi(a_i)$ and $\phi(a_i) = T\pi(a_i)T^{-1}$. Hence, $\pi(a_i)T^{-1}x_iT = T^{-1}x_iT\pi(a_i)$. By the definition of nuclear $C^*$-algebras and Theorem \ref{Kadison}, we have
\begin{align*}
\Bigl\|\Phi\Bigl(\sum_{i=1}^n a_i \otimes x_i\Bigr)\Bigr\|
&= \Bigl\|\sum_{i=1}^n \phi(a_i)x_i\Bigr\| \\
&= \Bigl\|\sum_{i=1}^n T\pi(a_i)T^{-1}x_iTT^{-1}\Bigr\| \\
&\leq \|T\| \|T^{-1}\|\Bigl\|\sum_{i=1}^n \pi(a_i)T^{-1}x_iT\Bigr\| \\
&\leq \|T\| \|T^{-1}\|\Bigl\|\sum_{i=1}^n a_i \otimes T^{-1}x_iT\Bigr\|_{\mathcal{A} \otimes_{\max} \pi(\mathcal{A})'} \\
&= \|T\| \|T^{-1}\|\Bigl\|\sum_{i=1}^n a_i \otimes T^{-1}x_iT\Bigr\|_{\mathcal{A} \otimes_{\min} \pi(\mathcal{A})'} \\
&= \|T\| \|T^{-1}\|\Bigl\|\sum_{i=1}^n a_i \otimes T^{-1}x_iT\Bigr\|_{\mathcal{A} \otimes_{\min} \mathcal{B}(\mathcal{H})} \\
&\leq \|T\|^2 \|T^{-1}\|^2\Bigl\|\sum_{i=1}^n a_i \otimes x_i\Bigr\|_{\mathcal{A} \otimes_{\min} \mathcal{B}(\mathcal{H})} \\
&= \|T\|^2 \|T^{-1}\|^2\Bigl\|\sum_{i=1}^n a_i \otimes x_i\Bigr\|_{\mathcal{A} \otimes_{\min} \mathcal{M}'}.
\end{align*}
Thus, $\Phi$ is bounded on a dense subset of $\mathcal{A} \otimes_{\min} \mathcal{M}'$, and hence $\Phi$ is bounded on $\mathcal{A} \otimes_{\min} \mathcal{M}'$.

Consider the representation
\[
\Phi^{(n)}: \mathcal{A} \otimes_{\min} \mathcal{M}' \otimes_{\min} M_n(\mathbb{C}) \longrightarrow \mathcal{B}(\mathcal{H}) \otimes_{\min} M_n(\mathbb{C})\]
defined by
\[\Phi^{(n)}\Bigl(\sum_{i=1}^k (a^i_{rs}) \otimes x_i\Bigr) := \sum_{i=1}^k (\phi(a^i_{rs})x_i) \quad \forall a^i_{rs} \in \mathcal{A}, x_i \in \mathcal{M}'.\]
Similarly,
\begin{align*}
\Bigl\|\Phi^{(n)}\Bigl(\sum_{i=1}^k (a^i_{rs}) \otimes x_i\Bigr)\Bigr\|
&= \Bigl\|\sum_{i=1}^k (\phi(a^i_{rs})x_i)\Bigr\| \\
&= \Bigl\|\sum_{i=1}^k (T\pi(a^i_{rs})T^{-1}x_iTT^{-1})\Bigr\| \\
&\leq  \|T\|\|T^{-1}\Bigl\|\sum_{i=1}^k (\pi(a^i_{rs}))(T^{-1}\otimes I_n)(x_i\otimes I_n)(T\otimes I_n)\Bigr\| \\
&\leq \|T\|\|T^{-1}\|\Bigl\|\sum_{i=1}^k (a^i_{rs}) \otimes T^{-1}x_iT\Bigr\|_{M_n(\mathcal{A}) \otimes_{\max} \pi(\mathcal{A})'} \\
&= \|T\|\|T^{-1}\|\Bigl\|\sum_{i=1}^k (a^i_{rs}) \otimes T^{-1}x_iT\Bigr\|_{M_n(\mathcal{A}) \otimes_{\min} \pi(\mathcal{A})'} \\
&= \|T\|\|T^{-1}\|\Bigl\|\sum_{i=1}^k (a^i_{rs}) \otimes T^{-1}x_iT\Bigr\|_{M_n(\mathcal{A}) \otimes_{\min} \mathcal{B}(\mathcal{H})} \\
&\leq \|T\|^2\|T^{-1}\|^2\Bigl\|\sum_{i=1}^k (a^i_{rs}) \otimes x_i\Bigr\|_{M_n(\mathcal{A}) \otimes_{\min} \mathcal{B}(\mathcal{H})} \\
&= \|T\|^2\|T^{-1}\|^2\Bigl\|\sum_{i=1}^k (a^i_{rs}) \otimes x_i\Bigr\|_{M_n(\mathcal{A}) \otimes_{\min} \mathcal{M}'}.
\end{align*}
Thus, $\Phi$ is completely bounded on a dense subset of $\mathcal{A} \otimes_{\min} \mathcal{M}'$, and hence $\Phi$ is a completely bounded representation of $\mathcal{A} \otimes_{\min} \mathcal{M}'$.

By Theorem \ref{Haagerup}, $\Phi$ is similar to a $*$-representation in $\mathcal{B}(\mathcal{H})$.  By Lemma \ref{three equivalences}, there exists a positive invertible operator $h \in \mathcal{B}(\mathcal{H})$ such that
\[
h\Phi\Bigl(\sum_{i=1}^n a_i \otimes x_i\Bigr) = \Phi\Bigl(\sum_{i=1}^n a_i^* \otimes x_i^*\Bigr)^* h, \quad \forall a_i \in \mathcal{A}, \; x_i \in \mathcal{M}'.
\]
In particular, for $x \in \mathcal{M}'$, we have
\[
h\Phi(1 \otimes x) = \Phi(1 \otimes x^*)^* h.
\]
Since $\Phi(1 \otimes x) = x$ and $\Phi(1 \otimes x^*) = x^*$, this implies $hx = xh$ for all $x \in \mathcal{M}'$. Therefore, $h \in \mathcal{M}''$. By the Double Commutant Theorem, $\mathcal{M}'' = \mathcal{M}$, hence $h \in \mathcal{M}$. By Lemma \ref{three equivalences}, there exists a positive invertible operator $S = h^{1/2} \in \mathcal{M}$ such that for all $a \in \mathcal{A}$,
$$
h\phi(a) = \phi(a^*)^* h,
$$
which implies that $S\phi(\cdot)S^{-1}$ is a $*$-representation.
\end{proof}

\begin{thm}
Let $\mathcal{A}$ be a $C^{*}$-algebra, $\mathcal{M}$ a von Neumann algebra, and $\varphi: \mathcal{A}\rightarrow\mathcal{M}$ a  bounded representation. Then the following are equivalent:
\begin{enumerate}
\item [{\rm (1)}]$\varphi$ is similar to a $*$-representation in $\mathcal{M}$;
\item [{\rm (2)}]$\Phi$ is a completely bounded representation, where $\Phi: \mathcal{A}\otimes_{max}\mathcal{M}'\rightarrow\mathcal{B}(\mathcal{H})$ is defined by
$$\Phi(\sum\limits_{i=1}^na_{i}\otimes x_{i}):=\sum\limits_{i=1}^n\varphi(a_{i})x_{i}\quad\forall~ a_{i}\in\mathcal{A},x_{i}\in\mathcal{M'};$$
\item [{\rm (3)}]$\varphi=\varphi_{1}-\varphi_{2}+i(\varphi_{3}-\varphi_{4})$, where $\varphi_{1}, \varphi_{2}, \varphi_{3}, \varphi_{4}\in CP(\mathcal{A},\mathcal{M})$, the set of completely positive maps from $\mathcal{A}$ to $\mathcal{M}$;
\item [{\rm (4)}]There exist $\phi_{1}, \phi_{2}\in CP(\mathcal{A},\mathcal{M})$ such that
$\Psi: \mathcal{A}\rightarrow\mathcal{M}\otimes\mathcal{M}_{2}(\mathbb{C})$ defined by
\begin{equation*}
\Psi(a)=\left ( \begin{matrix}\phi_{1}(a) &\varphi(a^{*})^{*}\\
\varphi(a)&\phi_{2}(a)
\end{matrix}\right )\quad\forall~a\in\mathcal{A}
\end{equation*}
is a completely positive map;
\item [{\rm (5)}]There exists a completely positive map $\phi: \mathcal{A}\rightarrow\mathcal{M}$ and $T, S\in\mathcal{M}$ such that  $\varphi(a)=T^{*}\phi(a)S$, $\forall a\in\mathcal{A}$;
\item [{\rm (6)}]There exist finitely many completely positive maps $\phi_{i}: \mathcal{A}\rightarrow\mathcal{M}$ and $T_{i}, S_{i}\in\mathcal{M}$ such that $\varphi(a)=\sum\limits_{i=1}^{n}T_{i}^{*}\phi_{i}(a)S_{i}$, $\forall a\in \mathcal{A}$;
\item [{\rm (7)}]There exist finitely many $*$-representations $\pi_{i}: \mathcal{A}\rightarrow\mathcal{M}$, positive invertible operators $T_{i}\in\mathcal{M}$, and $\alpha_{i}>0$ with $\sum\limits_{i=1}^{n}\alpha_{i}=1$ such that $\varphi(a)=\sum\limits_{i=1}^{n}\alpha_{i}T_{i}\pi_{i}(a)T_{i}^{-1}$, $\forall a\in \mathcal{A}$.
\end{enumerate}
\end{thm}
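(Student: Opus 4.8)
The plan is to prove the cycle of implications
\[
(1)\Rightarrow(5)\Rightarrow(6)\Rightarrow(4)\Rightarrow(3)\Rightarrow(2)\Rightarrow(1),
\]
together with $(1)\Rightarrow(7)\Rightarrow(6)$, so that all seven conditions collapse to one another. The core is the equivalence $(1)\Leftrightarrow(2)$, handled exactly as in the injective and nuclear cases treated above. For $(1)\Rightarrow(2)$ I may assume, by Lemma~\ref{three equivalences}, that the similarity is implemented by a positive invertible $S\in\mathcal{M}$, so $\varphi(a)=S^{-1}\pi(a)S$ for a $*$-representation $\pi$. Since $S\in\mathcal{M}$ commutes with every $x\in\mathcal{M}'$, one gets $\Phi(\sum a_i\otimes x_i)=S^{-1}\bigl(\sum\pi(a_i)x_i\bigr)S$, and because $(\pi,\mathrm{id}_{\mathcal{M}'})$ is a commuting pair of $*$-representations, Theorem~\ref{Kadison} furnishes a $*$-representation $\rho$ of $\mathcal{A}\otimes_{\max}\mathcal{M}'$ with $\rho(\sum a_i\otimes x_i)=\sum\pi(a_i)x_i$; thus $\Phi=S^{-1}\rho(\cdot)S$ is completely bounded. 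Conversely, if $\Phi$ is a completely bounded representation, Theorem~\ref{Haagerup} makes it similar to a $*$-representation, and Lemma~\ref{three equivalences} yields a positive invertible $h\in\mathcal{B}(\mathcal{H})$ with $h\Phi(y)=\Phi(y^{*})^{*}h$. Restricting to $1\otimes\mathcal{M}'$ gives $hx=xh$ for all $x\in\mathcal{M}'$, so $h\in\mathcal{M}''=\mathcal{M}$; restricting to $\mathcal{A}\otimes 1$ gives $h\varphi(a)=\varphi(a^{*})^{*}h$, and Lemma~\ref{three equivalences} returns $(1)$.

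For the decomposition conditions I argue as follows. For $(1)\Rightarrow(5)$ I take $\phi=\pi$ (a $*$-representation is completely positive); since $\varphi(a)=S^{-1}\pi(a)S$ with $S$ positive invertible in $\mathcal{M}$, this is of the form $T^{*}\phi(a)S$ with $T=S^{-1}\in\mathcal{M}$. The implication $(5)\Rightarrow(6)$ is the case $n=1$. For $(6)\Rightarrow(4)$ I assemble $\phi=\mathrm{diag}(\phi_1,\dots,\phi_n):\mathcal{A}\to M_n(\mathcal{M})$, set the columns $V=(S_1,\dots,S_n)^{t}$ and $W=(T_1,\dots,T_n)^{t}$, and form
\[
\Psi(a)=\begin{pmatrix}V^{*} & 0\\ 0 & W^{*}\end{pmatrix}\Bigl(\phi(a)\otimes\begin{pmatrix}1&1\\1&1\end{pmatrix}\Bigr)\begin{pmatrix}V & 0\\ 0 & W\end{pmatrix}.
\]
Because $\begin{pmatrix}1&1\\1&1\end{pmatrix}\ge 0$ and conjugation by a fixed operator preserves complete positivity, $\Psi$ is completely positive; its $(2,1)$ entry is $W^{*}\phi(a)V=\varphi(a)$, and its $(1,2)$ entry is $V^{*}\phi(a)W=\varphi(a^{*})^{*}$, using that completely positive maps are $*$-preserving. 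For $(4)\Rightarrow(3)$ I extract the corner by polarization: with $e_1,e_2$ the standard column generators of $M_{2,1}(\mathcal{M})$,
\[
\varphi(a)=e_2^{*}\Psi(a)e_1=\tfrac14\sum_{k=0}^{3} i^{-k}\,(e_2+i^{k}e_1)^{*}\Psi(a)(e_2+i^{k}e_1),
\]
and each summand is a completely positive $\mathcal{M}$-valued map, which regroups into the required $\varphi=\varphi_1-\varphi_2+i(\varphi_3-\varphi_4)$.

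The implication $(3)\Rightarrow(2)$ is where the maximal tensor product does the work, and here no passage to matrix algebras is needed. For a completely positive $\psi:\mathcal{A}\to\mathcal{M}$ the map $\psi\otimes\mathrm{id}_{\mathcal{M}'}$ is completely positive from $\mathcal{A}\otimes_{\max}\mathcal{M}'$ into $\mathcal{M}\otimes_{\max}\mathcal{M}'$, while the commuting inclusions of $\mathcal{M}$ and $\mathcal{M}'$ into $\mathcal{B}(\mathcal{H})$ induce, by Theorem~\ref{Kadison}, a $*$-homomorphism $m:\mathcal{M}\otimes_{\max}\mathcal{M}'\to\mathcal{B}(\mathcal{H})$ with $m(y\otimes x)=yx$; hence $\Phi_{\psi}:=m\circ(\psi\otimes\mathrm{id}_{\mathcal{M}'})$ is completely positive with $\Phi_{\psi}(\sum a_i\otimes x_i)=\sum\psi(a_i)x_i$. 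Writing $\varphi$ as in $(3)$ gives $\Phi=\Phi_{\varphi_1}-\Phi_{\varphi_2}+i(\Phi_{\varphi_3}-\Phi_{\varphi_4})$ on the dense algebraic tensor product, a finite combination of completely positive maps, hence completely bounded. Finally $(1)\Rightarrow(7)$ follows from Lemma~\ref{three equivalences} by taking the single term $\varphi(a)=S^{-1}\pi(a)S$ with $S^{-1}$ positive invertible, and $(7)\Rightarrow(6)$ holds because each $\pi_i$ is completely positive, so $\alpha_iT_i\pi_i(a)T_i^{-1}=(\alpha_iT_i)^{*}\pi_i(a)T_i^{-1}$ already has the form in $(6)$.

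I expect the only point of real substance to be $(2)\Rightarrow(1)$: the Haagerup--Hadwin--Wittstock theorem produces an implementing operator $h$ a priori only in $\mathcal{B}(\mathcal{H})$, and one must use the restriction to $1\otimes\mathcal{M}'$ to force $h\in\mathcal{M}''=\mathcal{M}$, so that the similarity lives inside $\mathcal{M}$ as required. The remaining implications are then a matter of verifying that the block and polarization manipulations preserve complete positivity and keep each building block appropriately $\mathcal{M}$-valued (or $M_2(\mathcal{M})$-valued in $(4)$); routing $(6)$ back to $(5)$ through $(4)\Rightarrow(3)\Rightarrow(2)\Rightarrow(1)\Rightarrow(5)$ neatly avoids any need to compress a $M_n(\mathcal{M})$-valued completely positive map into an $\mathcal{M}$-valued one.
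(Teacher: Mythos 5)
Your proposal is correct, and all the individual steps check out (I verified in particular that in $(6)\Rightarrow(4)$ the conjugated map $\Psi(a)=\mathrm{diag}(V,W)^{*}\bigl(\phi(a)\otimes E\bigr)\mathrm{diag}(V,W)$ with $E=\left(\begin{smallmatrix}1&1\\1&1\end{smallmatrix}\right)\ge 0$ has $(2,1)$-entry $W^{*}\phi(a)V=\varphi(a)$ and $(1,2)$-entry $V^{*}\phi(a)W=\varphi(a^{*})^{*}$, and that your polarization identity in $(4)\Rightarrow(3)$ is the correct one after the reindexing $k\mapsto -k$). The decisive step $(2)\Rightarrow(1)$ --- obtaining $h$ from Theorem~\ref{Haagerup} and Lemma~\ref{three equivalences} and forcing $h\in\mathcal{M}''=\mathcal{M}$ by testing against $1\otimes\mathcal{M}'$ --- is exactly the paper's argument. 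Where you genuinely diverge is in the organization and in the tools for the remaining implications. The paper proves $(1)\Rightarrow(2)$ and $(3)\Rightarrow(2)$ by hands-on norm estimates carried out separately at every matrix level $\Phi^{(n)}$, and in $(3)\Rightarrow(2)$ it simply asserts the inequality $\|\sum\varphi_{1}(a_{i})x_{i}\|\le\|\varphi_{1}\|\,\|\sum a_{i}\otimes x_{i}\|_{\max}$; you instead factor $\Phi$ through the universal property of $\otimes_{\max}$ (Theorem~\ref{Kadison}) as $S^{-1}\rho(\cdot)S$, respectively through $m\circ(\psi\otimes\mathrm{id})$ using functoriality of $\otimes_{\max}$ under c.p.\ maps, which is cleaner and actually supplies the justification the paper omits. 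The implication graphs also differ: the paper proves $(1)\Rightarrow(3)$ directly via the explicit decomposition with $T\pm T^{-1}$ and declares $(6)\Rightarrow(3)$ obvious, whereas you route $(6)\Rightarrow(4)\Rightarrow(3)$ through the block-matrix construction and reach $(1)\Rightarrow(3)$ only around the long cycle; both graphs are logically complete. The trade-off is that the paper's decompositions are fully explicit and self-contained at the cost of repetitive estimates, while your version leans on standard functoriality facts and conjugation tricks to shorten the verifications.
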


\begin{proof}
$(1)\Rightarrow(7)$, $(7)\Rightarrow(6)$, $(6)\Rightarrow(3)$, and $(1)\Rightarrow(5)$ are obvious.

$(1)\Rightarrow(2)$. If $\varphi$ is similar to a $*$-representation in $\mathcal{M}$, then there exists a positive invertible operator $T^{-1}\in\mathcal{M}$ such that $T^{-1}\varphi(\cdot)T=\pi(\cdot)$ is a $*$-representation. Consider
\begin{align*}
\Vert\Phi(\sum\limits_{i=1}^{n}a_{i}\otimes x_{i})\Vert&=\Vert\sum\limits_{i=1}^{n}\varphi(a_{i})x_{i}\Vert=\Vert\sum\limits_{i=1}^{n}T\pi(a_{i})T^{-1}x_{i}TT^{-1}\Vert\\
&\leq\Vert T\Vert\Vert T^{-1}\Vert\Vert\sum\limits_{i=1}^{n}\pi(a_{i})T^{-1}x_{i}T\Vert\\
&=\Vert T\Vert\Vert T^{-1}\Vert\Vert\sum\limits_{i=1}^{n}\pi(a_{i})x_{i}T^{-1}T\Vert\\
&=\Vert T\Vert\Vert T^{-1}\Vert\Vert\sum\limits_{i=1}^{n}\pi(a_{i})x_{i}\Vert\\
&\leq\Vert T\Vert\Vert T^{-1}\Vert\Vert\sum\limits_{i=1}^{n}a_{i}\otimes x_{i}\Vert_{\mathcal{A}\otimes_{max}\mathcal{M'}}.
\end{align*}
Thus $\Phi$ is bounded on a dense subset of $\mathcal{A}\otimes_{max}\mathcal{M'}$, hence $\Phi$ is bounded on $\mathcal{A}\otimes_{max}\mathcal{M'}$.

Consider the representation $$\Phi^{(n)}:\mathcal{A}\otimes \mathcal{M'}\otimes M_{n}(\mathbb{C})\longrightarrow \mathcal{B}(\mathcal{H})\otimes M_{n}(\mathbb{C})$$ defined by
$$\Phi^{(n)}(\sum\limits_{i=1}^k(a_{rs}^{i})\otimes x_{i})=\sum\limits_{i=1}^k(\varphi(a_{rs}^{i}))(x_{i}\otimes I_{n})\quad\forall~ a_{rs}^{i}\in\mathcal{A},x_{i}\in\mathcal{M'}.$$
Then
\begin{eqnarray*}
\Vert\Phi^{(n)}(\sum\limits_{i=1}^{k}(a_{rs}^{i})\otimes x_{i})\Vert
&=&\Vert\sum\limits_{i=1}^{k}(\varphi(a_{rs}^{i}))(x_{i}\otimes I_{n})\Vert\\
&=&\Vert\sum\limits_{i=1}^{n}(T\otimes I_{n})(\pi(a_{rs}^{i}))(T^{-1}\otimes I_{n})(x_{i}\otimes I_{n})(TT^{-1}\otimes I_{n})\Vert\\
&\leq&\Vert T\Vert\Vert T^{-1}\Vert\Vert\sum\limits_{i=1}^{n}(\pi(a_{rs}^{i}))(T^{-1}\otimes I_{n})(x_{i}\otimes I_{n})(T\otimes I_{n})\Vert\\
&=&\Vert T\Vert\Vert T^{-1}\Vert\Vert\sum\limits_{i=1}^{n}(\pi(a_{rs}^{i}))(x_{i}\otimes I_{n})(T^{-1}\otimes I_{n})(T\otimes I_{n})\Vert\\
&=&\Vert T\Vert\Vert T^{-1}\Vert\Vert\sum\limits_{i=1}^{n}(\pi(a_{rs}^{i}))(x_{i}\otimes I_{n})\Vert\\
&\leq&\Vert T\Vert\Vert T^{-1}\Vert\Vert\sum\limits_{i=1}^{n}(a_{rs}^{i})\otimes x_{i}\Vert_{\mathcal{A}\otimes_{max}\mathcal{M'}\otimes M_{n}(\mathbb{C})}.
\end{eqnarray*}
Thus $\Phi$ is completely bounded on a dense subset of $\mathcal{A}\otimes_{max}\mathcal{M'}$, hence $\Phi$ is a completely bounded representation on $\mathcal{A}\otimes_{max}\mathcal{M'}$.

$(2)\Rightarrow(1)$. If $\Phi$ is a completely bounded representation, then by Theorem~\ref{Haagerup}, $\Phi$ is similar to a $*$-representation in $\mathcal{B}(\mathcal{H})$. By Lemma~\ref{three equivalences}, there exists a positive invertible operator $T\in\mathcal{B}(\mathcal{H})$ such that for all $x\in\mathcal{A}\otimes_{max}\mathcal{M'}$, $T\Phi(x)=\Phi(x^{*})^{*}T$. In particular, for any $x\in\mathcal{M'}$, $Tx=xT$, so $T\in\mathcal{M''}$. By the Double Commutant Theorem for von Neumann algebras, $T\in\mathcal{M}$.

For any $a\in\mathcal{A}, 1\in\mathcal{M'}$, $$T\varphi(a)=T\Phi(a\otimes 1)=\Phi(a^{*}\otimes 1)^{*}T=\varphi(a^{*})^{*}T.$$
 By Lemma~\ref{three equivalences},  $\varphi$ is similar to a $*$-representation in $\mathcal{M}$.

$(1)\Rightarrow(3)$. If $\varphi$ is similar to a $*$-representation in $\mathcal{M}$, then there exists a positive invertible operator $T\in\mathcal{M}$ such that $T\varphi(\cdot)T^{-1}=\pi(\cdot)\in\mathcal{M}$ is a $*$-representation, so $\varphi(\cdot)=T^{-1}\pi(\cdot)T$.

For any $a\in\mathcal{A}$, consider \begin{align*}(T+T^{-1})\pi(a)(T+T^{-1})&=T\pi(a)T+T^{-1}\pi(a)T^{-1}+T^{-1}\pi(a)T+T\pi(a)T^{-1},\\
(T+iT^{-1})\pi(a)(T-iT^{-1})&=T\pi(a)T+T^{-1}\pi(a)T^{-1}+i(T^{-1}\pi(a)T-T\pi(a)T^{-1}).\end{align*}
Then
\begin{eqnarray*}
\varphi(a)&=&T^{-1}\pi(a)T\\&=&\frac{1}{2}[(T+T^{-1})\pi(a)(T+T^{-1})-(T\pi(a)T+T^{-1}\pi(a)T^{-1})]\\
& &+\frac{1}{2}i[(T\pi(a)T+T^{-1}\pi(a)T^{-1})-(T+iT^{-1})\pi(a)(T-iT^{-1})].
\end{eqnarray*}
Let \begin{align*}\varphi_{1}(a)&=\frac{1}{2}(T+T^{-1})\pi(a)(T+T^{-1}), \quad\varphi_{2}(a)=\frac{1}{2}(T\pi(a)T+T^{-1}\pi(a)T^{-1}), \\ \varphi_{3}(a)&=\frac{1}{2}(T\pi(a)T+T^{-1}\pi(a)T^{-1}), \quad\varphi_{4}(a)=\frac{1}{2}(T+iT^{-1})\pi(a)(T-iT^{-1}).\end{align*}
Then $\varphi_{1}, \varphi_{2}, \varphi_{3}, \varphi_{4}\in CP(\mathcal{A},\mathcal{M})$ and $\varphi=\varphi_{1}-\varphi_{2}+i(\varphi_{3}-\varphi_{4})$.

$(3)\Rightarrow(2)$. If $\varphi=\varphi_{1}-\varphi_{2}+i(\varphi_{3}-\varphi_{4})$, where $\varphi_{1}, \varphi_{2}, \varphi_{3}, \varphi_{4}\in CP(\mathcal{A},\mathcal{M})$.

Consider
\begin{align*}
\Vert\Phi(\sum\limits_{i=1}^{n}a_{i}\otimes x_{i})\Vert
&=\Vert\sum\limits_{i=1}^{n}\varphi(a_{i})x_{i}\Vert
=\Vert\sum\limits_{i=1}^{n}\varphi_{1}(a_{i})x_{i}+\varphi_{2}(a_{i})x_{i}+\varphi_{3}(a_{i})x_{i}+\varphi_{4}(a_{i})x_{i}\Vert\\
&\leq\Vert\sum\limits_{i=1}^{n}\varphi_{1}(a_{i})x_{i}\Vert+\Vert\sum\limits_{i=1}^{n}\varphi_{2}(a_{i})x_{i}\Vert+\Vert\sum\limits_{i=1}^{n}\varphi_{3}(a_{i})x_{i}\Vert+\Vert\sum\limits_{i=1}^{n}\varphi_{4}(a_{i})x_{i}\Vert\\
&\leq(\Vert\varphi_{1}\Vert+\Vert\varphi_{2}\Vert+\Vert\varphi_{3}\Vert+\Vert\varphi_{4}\Vert)\Vert\sum\limits_{i=1}^{n}a_{i}\otimes x_{i}\Vert_{\mathcal{A}\otimes_{max}\mathcal{M'}}.
\end{align*}
Thus $\Phi$ is bounded on a dense subset of $\mathcal{A}\otimes_{max}\mathcal{M'}$, hence $\Phi$ is bounded on $\mathcal{A}\otimes_{max}\mathcal{M'}$.

Consider the representation $$\Phi^{(n)}:\mathcal{A}\otimes_{max}\mathcal{M'}\otimes M_{n}(\mathbb{C})\longrightarrow \mathcal{B}(\mathcal{H})\otimes M_{n}(\mathbb{C})$$ defined by
$$\Phi^{(n)}(\sum\limits_{i}(a_{rs}^{i})\otimes x_{i})=\sum\limits_{i}(\varphi(a_{rs}^{i}))(x_{i}\otimes I_{n})\quad\forall~ a_{rs}^{i}\in\mathcal{A},x_{i}\in\mathcal{M'}.$$
Then
\begin{eqnarray*}
\Vert\Phi^{(n)}(\sum\limits_{i=1}^{k}(a_{rs}^{i})\otimes x_{i})\Vert
&=&\Vert\sum\limits_{i=1}^{k}(\varphi(a_{rs}^{i}))(x_{i}\otimes I_{n})\Vert\\
&=&\Vert\sum\limits_{i=1}^{n}(\varphi_{1}(a_{rs}^{i}))(x_{i}\otimes I_{n})+(\varphi_{2}(a_{rs}^{i}))(x_{i}\otimes I_{n})\\
& &+(\varphi_{3}(a_{rs}^{i}))(x_{i}\otimes I_{n})+(\varphi_{4}(a_{rs}^{i}))(x_{i}\otimes I_{n})\Vert\\
&\leq&\Vert\sum\limits_{i=1}^{n}(\varphi_{1}(a_{rs}^{i}))(x_{i}\otimes I_{n})\Vert +\Vert (\varphi_{2}(a_{rs}^{i}))(x_{i}\otimes I_{n})\Vert \\
& &+\Vert (\varphi_{3}(a_{rs}^{i}))(x_{i}\otimes I_{n})\Vert +\Vert (\varphi_{4}(a_{rs}^{i}))(x_{i}\otimes I_{n})\Vert\\
&\leq&(\Vert\varphi_{1}\Vert+\Vert\varphi_{2}\Vert+\Vert\varphi_{3}\Vert+\Vert\varphi_{4}\Vert)\Vert\sum\limits_{i=1}^{n}(a_{rs}^{i})\otimes x_{i}
\Vert_{\mathcal{A}\otimes_{max}\mathcal{M'}\otimes M_{n}(\mathbb{C})}.
\end{eqnarray*}
Thus $\Phi$ is completely bounded on a dense subset of $\mathcal{A}\otimes_{max}\mathcal{M'}$, hence $\Phi$ is a completely bounded representation on $\mathcal{A}\otimes_{max}\mathcal{M'}$.

$(5)\Rightarrow(3)$. If $\varphi: \mathcal{A}\rightarrow\mathcal{M}$, $\varphi(a)=T^{*}\phi(a)S$ is a completely bounded map, where $T, S\in\mathcal{M}$, $\phi: \mathcal{A}\rightarrow\mathcal{M}$ is a completely positive map, let \begin{align*}\varphi_{1}&=\frac{1}{4}(T^{*}+S^{*})\phi(T+S), \quad\varphi_{2}=\frac{1}{4}(T^{*}-S^{*})\phi(T-S),\\
\varphi_{3}&=\frac{1}{4}(T^{*}+iS^{*})\phi(T-iS), \quad\varphi_{4}=\frac{1}{4}(T^{*}-iS^{*})\phi(T+iS),\end{align*} then $\varphi_{1}, \varphi_{2}, \varphi_{3}, \varphi_{4}\in CP(\mathcal{A},\mathcal{M})$, and
$$\varphi_{1}-\varphi_{2}=\frac{1}{2}(T^{*}\phi S+S^{*}\phi T), \quad\varphi_{3}-\varphi_{4}=\frac{1}{2}i(-T^{*}\phi S+S^{*}\phi T).$$
Hence $$\varphi=T^{*}\phi S=\varphi_{1}-\varphi_{2}+i(\varphi_{3}-\varphi_{4}).$$

$(3)\Rightarrow(4)$. Suppose $\varphi=\sum\limits_{i=1}^{n}c_{i}\varphi_{i}$, $c_{i}\in\mathbb{C}$ and $\varphi_{i}\in CP(\mathcal{A},\mathcal{M})$, then $$\phi_{1}=\phi_{2}=\sum\limits_{i=1}^{n}\Vert c_{i}\Vert\varphi_{i}\in CP(\mathcal{A},\mathcal{M})$$ such that
$$\Psi: \mathcal{A}\rightarrow\mathcal{M}\otimes\mathcal{M}_{2}(\mathbb{C})$$ defined by
\begin{equation*}
\Psi(a)=\left ( \begin{matrix}\phi_{1}(a) &\varphi(a^{*})^{*}\\
\varphi(a)&\phi_{2}(a)
\end{matrix}\right )\quad\forall~a\in\mathcal{A}
\end{equation*}
is a completely positive map.

$(4)\Rightarrow(3)$. Let
\begin{align*}\varphi_{1}&=\frac{1}{4}(\phi_{1}+\phi_{2}+\varphi+\varphi^{*}),\quad \varphi_{2}=\frac{1}{4}(\phi_{1}+\phi_{2}-\varphi-\varphi^{*}),\\
\varphi_{3}&=\frac{1}{4}(\phi_{1}+\phi_{2}-i\varphi+i\varphi^{*}),\quad \varphi_{4}=\frac{1}{4}(\phi_{1}+\phi_{2}+i\varphi-i\varphi^{*}).\end{align*}
Then $\varphi_{1}, \varphi_{2}, \varphi_{3}, \varphi_{4}\in CP(\mathcal{A},\mathcal{M})$ and $\varphi=\varphi_{1}-\varphi_{2}+i(\varphi_{3}-\varphi_{4})$.
\end{proof}

\begin{thm}
Let \(\mathcal{A}\) be a separable \(C^{*}\)-algebra, and \(\mathcal{M}\) a von Neumann algebra with QWEP. Let \(\phi: \mathcal{A} \to \mathcal{M}\) be a completely bounded representation. Then there exists a positive invertible operator \(S \in \mathcal{M}\) such that \(S\phi(\cdot)S^{-1}\) is a \(*\)-representation.
\end{thm}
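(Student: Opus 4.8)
The plan is to reduce the similarity assertion to a single complete-boundedness estimate for an auxiliary homomorphism, and then to extract that estimate from Pop's tensor theorem (Theorem~\ref{Pop}). Fix a faithful normal representation $\mathcal{M}\subseteq\mathcal{B}(\mathcal{H})$ with commutant $\mathcal{M}'$. First I pass to the unitization: replacing $\mathcal{A}$ by $\tilde{\mathcal{A}}=\mathcal{A}\oplus\mathbb{C}1$ and $\phi$ by $\tilde\phi(a+\lambda1)=\phi(a)+\lambda I$ (which is completely bounded, being $\phi$ composed with the canonical projection plus a scalar multiple of the character), and then renaming, I may assume that $\mathcal{A}$ is unital and separable. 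A positive invertible $S\in\mathcal{M}$ conjugating $\tilde\phi$ to a $*$-representation of $\tilde{\mathcal{A}}$ restricts to one conjugating $\phi$ on $\mathcal{A}$, so no generality is lost. By the implication $(2)\Rightarrow(1)$ of the preceding theorem, it then suffices to show that the homomorphism
\[
\Phi:\mathcal{A}\otimes_{\max}\mathcal{M}'\longrightarrow\mathcal{B}(\mathcal{H}),\qquad \Phi\Bigl(\sum_i a_i\otimes x_i\Bigr)=\sum_i\phi(a_i)x_i,
\]
is completely bounded; the resulting invertible similarity is upgraded to a \emph{positive} invertible one by Lemma~\ref{three equivalences}.

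Since $\mathcal{A}$ is unital and separable, it is generated by countably many unitaries, so there is a surjective unital $*$-homomorphism $q:C^{*}(F_\infty)\to\mathcal{A}$, and $C^{*}(F_\infty)$ has the LP by Theorem~\ref{Ozawa}. Then $\alpha:=\phi\circ q:C^{*}(F_\infty)\to\mathcal{M}$ is a completely bounded homomorphism with $\|\alpha\|_{cb}\le\|\phi\|_{cb}$. I now apply Theorem~\ref{Pop} with $\mathcal{A}_1=C^{*}(F_\infty)$ (which has the LP), $\mathcal{B}_1=\mathcal{M}$ (which is QWEP), the completely bounded homomorphism $\alpha$, and the u.c.p.\ map $\varphi=\mathrm{id}_{\mathcal{M}'}$. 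It yields a completely bounded extension
\[
\alpha\otimes\mathrm{id}:\;C^{*}(F_\infty)\otimes_{\max}\mathcal{M}'\longrightarrow\mathcal{M}\otimes_{\max}\mathcal{M}',\qquad \|\alpha\otimes\mathrm{id}\|_{cb}\le\|\alpha\|_{cb}.
\]

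To transfer this estimate to $\Phi$, observe that the inclusions of $\mathcal{M}$ and of $\mathcal{M}'$ into $\mathcal{B}(\mathcal{H})$ are commuting $*$-representations, so by Theorem~\ref{Kadison} the product map $\mu:\mathcal{M}\otimes_{\max}\mathcal{M}'\to\mathcal{B}(\mathcal{H})$, $\mu(m\otimes x)=mx$, is a $*$-homomorphism, hence completely contractive. On elementary tensors $\mu\circ(\alpha\otimes\mathrm{id})$ sends $a\otimes x\mapsto\phi(q(a))x$, which means $\mu\circ(\alpha\otimes\mathrm{id})=\Phi\circ(q\otimes_{\max}\mathrm{id})$; in particular this composite is completely bounded with cb-norm at most $\|\phi\|_{cb}$. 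Because the maximal tensor product is exact, $q\otimes_{\max}\mathrm{id}:C^{*}(F_\infty)\otimes_{\max}\mathcal{M}'\to\mathcal{A}\otimes_{\max}\mathcal{M}'$ is a complete quotient map, and therefore $\|\Phi\|_{cb}=\|\Phi\circ(q\otimes_{\max}\mathrm{id})\|_{cb}\le\|\phi\|_{cb}$. Thus $\Phi$ is completely bounded, and the reduction of the first paragraph finishes the proof.

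The step I expect to be the crux is the invocation of Pop's theorem: the essential recognition is that its hypotheses match our situation precisely once $\mathcal{A}$ has been lifted to the LP-algebra $C^{*}(F_\infty)$, with the QWEP hypothesis falling on the \emph{target} $\mathcal{M}$ and the second factor entering only through the u.c.p.\ map $\mathrm{id}_{\mathcal{M}'}$; crucially, the cb-bound it delivers is governed by $\|\alpha\|_{cb}$ and is insensitive to the size of the commutant $\mathcal{M}'$. The remaining delicate point is purely functorial: it is the exactness (quotient-preservation) of the maximal tensor product that makes $q\otimes_{\max}\mathrm{id}$ a complete quotient map and so lets the cb-norm pass from $\Phi\circ(q\otimes_{\max}\mathrm{id})$ down to $\Phi$ itself.
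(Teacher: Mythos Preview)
Your proof is correct and follows essentially the same route as the paper's: lift $\mathcal{A}$ to $C^{*}(F_\infty)$ via separability, apply Pop's Theorem~\ref{Pop} with $\alpha=\phi\circ q$ and $\varphi=\mathrm{id}_{\mathcal{M}'}$ to obtain a completely bounded map $C^{*}(F_\infty)\otimes_{\max}\mathcal{M}'\to\mathcal{M}\otimes_{\max}\mathcal{M}'$, and compose with the product $*$-homomorphism $\mu$ into $\mathcal{B}(\mathcal{H})$. The only organizational difference is that the paper carries out the Haagerup--commutant argument directly on $C^{*}(F_\infty)\otimes_{\max}\mathcal{M}'$ and only at the very end pushes the intertwining relation $T\phi_2(x)=\phi_2(x^*)^*T$ forward to $\mathcal{A}$ through the surjection, whereas you first descend the complete boundedness to $\Phi$ on $\mathcal{A}\otimes_{\max}\mathcal{M}'$ via the complete-quotient property of $q\otimes_{\max}\mathrm{id}$ and then invoke the implication $(2)\Rightarrow(1)$ of the preceding theorem as a black box; the content is the same.
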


\begin{proof}
We may assume that $\mathcal{M}$ acts on \(\mathcal{H}\) faithfully, i.e., $\mathcal{M}\subset \mathcal{B}({\mathcal{H}})$. Since \(\mathcal{A}\) is separable, it is a quotient of \(C^{*}(F_{\infty})\). Let
\[
\phi_1: C^{*}(F_{\infty}) \longrightarrow \mathcal{A}
\]
be the natural surjective \(*\)-homomorphism. Then
\[
\phi \circ \phi_1: C^{*}(F_{\infty}) \longrightarrow \mathcal{M}
\]
is a completely bounded representation. Denote \(\phi_2 = \phi \circ \phi_1\).

Since \(C^{*}(F_{\infty})\) has the LP and \(\mathcal{M}\) is a von Neumann algebra with QWEP, and since the identity map \(\mathrm{id}: \mathcal{M}' \to \mathcal{M}'\) is a unital completely positive map, Theorem \ref{Pop} implies that \(\phi_2 \otimes id\) extends to a completely bounded representation
\[
C^{*}(F_{\infty}) \otimes_{\max} \mathcal{M}' \overset{\phi_2 \otimes id}{\longrightarrow} \mathcal{M} \otimes_{\max} \mathcal{M}',
\]
satisfying \(\|\phi_2\otimes id\|_{\mathrm{cb}} \leq \|\phi_2\|_{\mathrm{cb}}\).

Let $\pi:\,\mathcal{M}\otimes_{\max}\mathcal{M}'\longrightarrow \mathcal{B}(\mathcal{H})$ be the bounded unital $\ast$-representation such that
\[
\pi(x\otimes b)=xb,\quad x\in\mathcal{M}, b\in\mathcal{M}'.
\]
Define $\Phi:\, C^*(F_{\infty})\otimes_{\max}\mathcal{M}'\longrightarrow \mathcal{B}(\mathcal{H})$ by
\[
\Phi(\sum_{i=1}^n x_i\otimes b_i)=\sum_{i=1}^n\phi_2(x_i)b_i, \quad x_i\in C^*(F_{\infty}), b_i\in \mathcal{M}'.
\]
Then $\Phi=\pi\circ (\phi_2\otimes id)$ is a completely bounded representation of $ C^*(F_{\infty})\otimes_{\max}\mathcal{M}'$ on $\mathcal{H}$.
 By Theorem \ref{Haagerup}, \(\Phi\) is similar to a \(*\)-representation in $\mathcal{B}(\mathcal{H})$. By  Lemma \ref{three equivalences},  there exists a positive invertible operator \(T \in \mathcal{B}(\mathcal{H})\) such that
\[
T \Phi\Bigl(\sum_{i=1}^n x_i \otimes b_i\Bigr) = \Phi\Bigl(\sum_{i=1}^n x_i^* \otimes b_i^*\Bigr)^* T, \quad \forall x_i \in C^{*}(F_{\infty}), \; b_i \in \mathcal{M}'.
\]
In particular, for any \(b \in \mathcal{M}'\),
\[
T \Phi(1 \otimes b) = \Phi(1 \otimes b^*)^* T.
\]
Since \(\Phi(1 \otimes b) = b\) and \(\Phi(1 \otimes b^*) = b^*\), this implies \(T b = b T\). Hence, \(T\) commutes with every element of \(\mathcal{M}'\), so \(T \in \mathcal{M}''\). By the Double Commutant Theorem, \(\mathcal{M}'' = \mathcal{M}\), thus \(T \in \mathcal{M}\). Moreover, for any \(x \in C^{*}(F_{\infty})\),
\[
T \phi_2(x) = \phi_2(x^*)^* T.
\]

Since \(\phi_1\) is a surjective \(*\)-homomorphism, for any \(a \in \mathcal{A}\) there exists an \(x \in C^{*}(F_{\infty})\) such that \(\phi_1(x) = a\) and \(\phi_1(x^*) = a^*\). Therefore,
\[
T \phi(a) = T \phi(\phi_1(x)) = T \phi_2(x) = \phi_2(x^*)^* T = (\phi \circ \phi_1(x^*))^* T = \phi(a^*)^* T, \quad \forall a \in \mathcal{A}.
\]

By Lemma \ref{three equivalences}, setting \(S = T^{1/2} \in \mathcal{M}\) yields that \(S \phi(\cdot) S^{-1}\) is a \(*\)-representation. This completes the proof.
\end{proof}

\section{A second approach to the main theorem}
The following Lemma is due to Don Hadwin (see the proof of (1)$\Rightarrow$(3) of Theorem 3)~\cite{Hadwin}.
\begin{lem}{\rm (Hadwin)}\label{1}
If $\phi:\mathcal{A}\rightarrow\mathcal{B}(\mathcal{H})$, then the following are equivalent:
\begin{enumerate}
\item [{\rm (1)}]$\phi$ is a unital completely bounded linear map from $\mathcal{A}$ to $\mathcal{B}(\mathcal{H})$;
\item [{\rm (2)}]There exists a unital $*$-representation $\pi:\mathcal{A}\rightarrow\mathcal{B}(\mathcal{H})$ and operators $X,Y\in\mathcal{B}(\mathcal{H})$, such that $XY=I$ and $\phi(a)=X\pi(a)Y$ for every a in $\mathcal{A}$.
    \end{enumerate}
\end{lem}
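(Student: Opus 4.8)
My plan is to treat the easy implication $(2)\Rightarrow(1)$ by direct estimate and to derive $(1)\Rightarrow(2)$ from the Wittstock--Paulsen representation of completely bounded maps, followed by a reduction that forces the representation and the two intertwiners to live on the single Hilbert space $\mathcal{H}$.

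For $(2)\Rightarrow(1)$: a unital $*$-representation is completely positive and completely contractive, so if $\phi(a)=X\pi(a)Y$ then for every $n$ and every $(a_{ij})\in M_n(\mathcal{A})$ one has $\phi^{(n)}((a_{ij}))=(X\otimes I_n)\,\pi^{(n)}((a_{ij}))\,(Y\otimes I_n)$, whence $\|\phi\|_{\mathrm{cb}}\le\|X\|\,\|Y\|$; moreover $\phi(1)=X\pi(1)Y=XY=I$, so $\phi$ is unital.

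For $(1)\Rightarrow(2)$ the engine is Paulsen's off-diagonal technique together with Stinespring's theorem. Writing $c=\|\phi\|_{\mathrm{cb}}$ and applying the off-diagonal trick to $c^{-1}\phi$ produces unital completely positive maps $\psi_1,\psi_2:\mathcal{A}\to\mathcal{B}(\mathcal{H})$ for which
\[
\Phi\begin{pmatrix}a_{11}&a_{12}\\ a_{21}&a_{22}\end{pmatrix}
=\begin{pmatrix}\psi_1(a_{11})& c^{-1}\phi(a_{12})\\ c^{-1}\phi(a_{21}^{*})^{*}&\psi_2(a_{22})\end{pmatrix}
\]
defines a unital completely positive map on $M_2(\mathcal{A})$. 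Stinespring's dilation theorem then yields a Hilbert space $\mathcal{K}$, a unital $*$-representation $\rho:M_2(\mathcal{A})\to\mathcal{B}(\mathcal{K})$, and an isometry $W:\mathcal{H}\oplus\mathcal{H}\to\mathcal{K}$ with $\Phi(\cdot)=W^{*}\rho(\cdot)W$. The scalar matrix units $1\otimes e_{ij}$ are sent by $\rho$ to a system of matrix units $f_{ij}=\rho(1\otimes e_{ij})$; these split $\mathcal{K}=\mathcal{K}_1\oplus\mathcal{K}_2$ and identify $\rho$ with the (twofold) ampliation of a single unital $*$-representation $\pi:\mathcal{A}\to\mathcal{B}(\mathcal{K}_1)$. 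Reading off the $(1,2)$-corner of $W^{*}\rho(\cdot)W$ gives $c^{-1}\phi(a)=W_1^{*}\pi(a)W_2$ for two maps $W_1,W_2:\mathcal{H}\to\mathcal{K}_1$ assembled from $W$ and $f_{12}$, and the unitality of $\psi_1,\psi_2$ forces $W_1,W_2$ to be isometries; taking $a=1$ gives $W_1^{*}W_2=c^{-1}I$.

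It remains to transport this decomposition onto $\mathcal{H}$ itself, which is the only genuinely delicate point. Restricting to the case relevant here, where $\mathcal{H}$ is infinite-dimensional and separable (so $\mathcal{K}$ is separable), I would pass to the infinite ampliation $\Pi=\pi\otimes 1_{\ell^2}$ on $\mathcal{K}_1\otimes\ell^2(\mathbb{N})$, replacing $W_i$ by $W_i'=W_i(\,\cdot\,)\otimes e_0$, which changes none of the identities above since $W_1^{*}W_2=(W_1')^{*}W_2'$. Fixing a unitary $U:\mathcal{K}_1\otimes\ell^2(\mathbb{N})\to\mathcal{H}$ and setting $\tilde\pi=U\Pi(\cdot)U^{*}$, $X=c\,(UW_1')^{*}$ and $Y=UW_2'$, all three operators act on $\mathcal{H}$, one has $\phi(a)=X\tilde\pi(a)Y$, and $XY=c\,(UW_1')^{*}(UW_2')=c\,W_1^{*}W_2=I$. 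I expect this identification to be the crux: it genuinely requires $\dim\mathcal{H}=\infty$ so that $XY=I$ can hold with $YX\ne I$ — for finite-dimensional $\mathcal{H}$ the relation $XY=I$ would make $Y=X^{-1}$ and force every unital completely bounded $\phi$ to be a similarity of a $*$-representation, hence multiplicative, which is false for a general linear $\phi$ — and it is precisely the infinite ampliation that lets the representation be realized on $\mathcal{H}$ while preserving $XY=I$.
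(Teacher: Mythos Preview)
The paper does not prove this lemma: it attributes the result to Hadwin and simply refers the reader to the proof of $(1)\Rightarrow(3)$ of Theorem~3 in \cite{Hadwin}, so there is nothing in the paper itself to compare your argument against.

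Your approach --- Paulsen's off-diagonal trick plus Stinespring, followed by an infinite ampliation and a unitary identification $\mathcal{K}_1\otimes\ell^2\cong\mathcal{H}$ to transport the representation onto $\mathcal{H}$ --- is the standard route and is correct for infinite-dimensional separable $\mathcal{H}$; this is in the same spirit as Hadwin's original argument. Your closing remark is on point and worth keeping: the lemma as stated genuinely needs $\dim\mathcal{H}=\infty$, since for finite-dimensional $\mathcal{H}$ the relation $XY=I$ forces $Y=X^{-1}$ and hence $\phi$ multiplicative, which a general unital completely bounded linear map is not. The paper leaves this hypothesis tacit. One place to tighten: the claim that $W_1,W_2$ are isometries follows from $\psi_i(1)=I$ only after you identify each $W_i$ with its component landing in $\mathcal{K}_1$ (using the partial isometry $f_{12}:\mathcal{K}_2\to\mathcal{K}_1$ for the second one); as written the bookkeeping is slightly loose but easily repaired.
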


\begin{lem}\label{2}
If $\phi:\mathcal{A}\rightarrow\mathcal{B}(\mathcal{H})$ is a unital completely bounded linear map, then there exists an invertible positive operator $T\in\mathcal{B}(\mathcal{H})$, such that $T\phi(a)=\phi(a^{*})^{*}T$.
\end{lem}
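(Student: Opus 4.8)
The plan is to prove the statement directly from the Hadwin decomposition of Lemma~\ref{1}, rather than from any similarity theorem for homomorphisms. First I would apply Lemma~\ref{1} to the unital completely bounded map $\phi$, obtaining a unital $*$-representation $\pi:\mathcal{A}\to\mathcal{B}(\mathcal{H})$ together with operators $X,Y\in\mathcal{B}(\mathcal{H})$ satisfying $XY=I$ and $\phi(a)=X\pi(a)Y$ for every $a\in\mathcal{A}$. Since $\pi$ is a $*$-representation we have $\pi(a^*)^*=\pi(a)$, so
\[
\phi(a^*)^*=(X\pi(a^*)Y)^*=Y^*\pi(a)X^*.
\]
Consequently the sought identity $T\phi(a)=\phi(a^*)^*T$ is equivalent to $TX\pi(a)Y=Y^*\pi(a)X^*T$ for all $a\in\mathcal{A}$, and the whole problem reduces to producing one positive invertible $T\in\mathcal{B}(\mathcal{H})$ intertwining the two sides.

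The natural candidate is $T=Y^*Y$. It is positive, and it is invertible because $XY=I$ forces $\|Yh\|\ge\|X\|^{-1}\|h\|$, so $Y$ is bounded below and $Y^*Y\ge\|X\|^{-2}I$. To see when this choice succeeds I would record the clean sufficient condition $TX=Y^*$: taking adjoints it gives $X^*T=Y$, and then both sides of the intertwining equation collapse to $Y^*\pi(a)Y$. Multiplying $TX=Y^*$ on the right by $Y$ and using $XY=I$ shows that $T$ is then forced to be $Y^*Y$, and conversely that $Y^*Y\,X=Y^*$ holds exactly when $\ker X\perp\operatorname{ran}Y$, that is, exactly when the idempotent $P:=YX$ is a self-adjoint projection, equivalently when $X$ is the Moore--Penrose inverse of $Y$.

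The heart of the argument is therefore to arrange the Hadwin data in this \emph{balanced} form, with $P=YX$ an orthogonal projection. My plan is to obtain such a decomposition not from an arbitrary application of Lemma~\ref{1}, but from a minimal Stinespring dilation of the completely positive $2\times2$ map associated with $\phi$ by Paulsen's off-diagonalization, namely $a\mapsto\bigl(\begin{smallmatrix}\phi_1(a)&\phi(a^*)^*\\ \phi(a)&\phi_2(a)\end{smallmatrix}\bigr)$, whose complete positivity is guaranteed by the complete boundedness of $\phi$; reading $X$ and $Y$ off as compressions of a single $*$-representation of $M_2(\mathcal{A})$ is what one hopes will force $P=YX$ to be self-adjoint. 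Once the balanced pair is in hand, $T=Y^*Y$ satisfies $TX=Y^*$, hence the intertwining identity, and $S=T^{1/2}$ conjugates $\phi$ to a self-adjoint map.

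The step I expect to be the main obstacle is precisely this balancing: for a genuinely non-multiplicative $\phi$ the operators $X,Y$ delivered by a general decomposition need not satisfy $X=Y^{+}$, so the self-adjointness of $P=YX$ cannot be assumed and must be extracted from the minimality and essential uniqueness of the Stinespring dilation. This is the only place where the completely bounded hypothesis (as opposed to mere boundedness) can be made to do real work, and the positivity and invertibility of the final $T$ hinge entirely on it; I would accordingly concentrate the effort on making the passage from the $2\times2$ completely positive dilation to a balanced pair $(X,Y)$ rigorous.
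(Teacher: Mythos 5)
Your reduction is correct as far as it goes, and it isolates exactly the right quantity: given Hadwin data $\phi(a)=X\pi(a)Y$ with $XY=I$, the operator $T=Y^{*}Y$ is positive and invertible, the identity $T\phi(a)=\phi(a^{*})^{*}T$ follows from $TX=Y^{*}$, and $TX=Y^{*}$ holds precisely when the idempotent $YX$ is an orthogonal projection, i.e.\ when $X$ is the Moore--Penrose inverse of $Y$. You should also know that this is the \emph{same} condition on which the paper's own proof turns: the paper takes $T=|X^{*}|^{-2}$, and its pivotal assertion $|X^{*}|^{-1}V_{2}=|Y|V_{1}^{*}$ is, after unwinding the polar decompositions (since $V_{2}=|X^{*}|^{-1}X$ and $|Y|V_{1}^{*}=Y^{*}$), nothing but $TX=Y^{*}$; right-multiplying by $Y$ and using $XY=I$ forces $T=Y^{*}Y$, so the paper's candidate coincides with yours and the two arguments stand or fall together on the balancing condition you identified. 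The paper asserts that condition without justification; you, rightly, flag it as unproven.

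The gap is genuine and, more importantly, it cannot be closed, because Lemma~\ref{2} as stated is false for general unital completely bounded linear maps. Take $\mathcal{A}=M_{2}(\mathbb{C})$, $\mathcal{H}=\mathbb{C}$, and $\phi(a)=a_{11}+a_{12}$. This map is linear, unital ($\phi(I)=1$), and completely bounded (every bounded linear functional is completely bounded). A positive invertible $T\in\mathcal{B}(\mathcal{H})$ is a scalar $t>0$, and the required identity at $a=e_{12}$ reads $t\,\phi(e_{12})=\overline{\phi(e_{21})}\,t$, i.e.\ $t\cdot 1=0$, a contradiction. Consequently, by your own (correct) computation, for this $\phi$ \emph{no} decomposition $\phi=X\pi(\cdot)Y$ with $XY=I$ can have $YX$ self-adjoint; minimality or essential uniqueness of the Stinespring dilation of the Paulsen $2\times 2$ map cannot produce an object that does not exist, so the balancing step is not a technical obstacle but an impossibility. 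What is true --- and is what the rest of the paper actually needs when it can be salvaged --- is the statement for unital completely bounded \emph{homomorphisms}: there Theorem~\ref{Haagerup} together with Lemma~\ref{three equivalences} yields the positive invertible $T$ with $T\phi(a)=\phi(a^{*})^{*}T$. Any correct version of Lemma~\ref{2} must add multiplicativity (or some hypothesis implying $\phi$ is similar, via a positive invertible operator, to a self-adjoint map); complete boundedness alone is not strong enough, so you should redirect your effort from proving the balancing to recognizing that it fails.
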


\begin{proof}
By lemma \ref{1}, there exists a unital $*$-representation $\pi:\mathcal{A}\rightarrow\mathcal{B}(\mathcal{H})$ and operators $X,Y\in\mathcal{B}(\mathcal{H})$, such that $XY=I$ and $\phi(a)=X\pi(a)Y$ for every a in $\mathcal{A}$. Since $XY=I$, for every $\xi\in\mathcal{H}$, $$\Vert\xi\Vert=\Vert XY\xi\Vert\leq\Vert X\Vert\Vert Y\xi\Vert,$$ that is $\Vert Y\xi\Vert\geq\frac{1}{\Vert X\Vert}\Vert\xi\Vert$. Therefore $$\Vert|Y|\xi\Vert=\Vert Y\xi\Vert\geq\frac{1}{\Vert X\Vert}\Vert\xi\Vert.$$ This implies that $|Y|$ is injective and with closed range. Thus $|Y|$ is invertible. Since $XY=I$, $Y^{*}X^{*}=I$. Similarly, $|X^{*}|$ is invertible.

Let $Y=V_{1}|Y|$, $X=|X^{*}|V_{2}$ be polar decompositions. Then $XY=I$ implies $|X^{*}|V_{2}V_{1}|Y|=I$. Since $|Y|$, $|X^{*}|$ are invertible, $V_{2}V_{1}$ is invertible. Therefore, $RanV_{2}=\mathcal{H}$ and $V_{2}V_{2}^{*}=I$. Note that $V_{1}^{*}V_{2}^{*}$ is also invertible. So $V_{1}^{*}V_{1}=I$. Let $T=|X^{*}|^{-2}$. Claim $T\phi(a)=\phi(a^{*})^{*}T$. Since $|X^*| V_2 V_1 |Y| = I$, we have $ I = |Y| V_1^* V_2^* |X^*|$ and therefore  $|X^*|^{-1} V_2 = |Y|V_1^*.$
Since \begin{align*}\phi(a)&=X\pi(a)Y=|X^{*}|V_{2}\pi(a)V_{1}|Y|,\\
\phi(a^{*})^{*}&=|Y|V_{1}^{*}\pi(a)V_{2}^{*}|X^{*}|,\end{align*}
we have  $$|X^*|^{-1} V_2 \pi(a) V_1|Y| = |Y|V_1^* \pi(a) V_2^* |X^*|^{-1}$$ and therefore
$$
T\phi(a) = \phi(a^*)^* T.
$$
This completes the proof.
\end{proof}

For a proof of the following Lemma, we refer to Proposition 1.48 in \cite{pisier}.
\begin{lem}\label{3}
Let \((\mathcal{A}_i)_{i \in I}\) be a family of \(C^*\)-algebras.
Then \((\oplus\sum_{i \in I} \mathcal{A}_i)_{\infty}\) is injective iff \(\mathcal{A}_i\) is injective for all \(i \in I\).
In particular, \((\oplus\sum_{n=1}^{\infty} M_{n}(\mathbb{C}))_{\infty}\) is a unital injective \(C^*\)-algebra.
\end{lem}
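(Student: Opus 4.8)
The plan is to run both implications off the extension characterization of injectivity rather than the conditional-expectation definition, since the $\mathcal{A}_i$ here are general $C^*$-algebras: a unital operator system $\mathcal{B}$ is \emph{injective} exactly when, for every inclusion of operator systems $E\subseteq F$ and every completely positive (c.p.) map $\varphi:E\to\mathcal{B}$, there is a c.p. extension $\tilde\varphi:F\to\mathcal{B}$ with $\tilde\varphi|_E=\varphi$. Write $\mathcal{A}=(\oplus\sum_{i\in I}\mathcal{A}_i)_\infty$, let $p_i:\mathcal{A}\to\mathcal{A}_i$ be the coordinate $*$-homomorphisms, and let $e_i\in\mathcal{A}$ be the corresponding coordinate projections. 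I will also use, repeatedly, the standard identity $\|\psi\|=\|\psi(1)\|$ for c.p. maps on unital operator systems, together with the fact that positivity in $M_n(\mathcal{A})\cong(\oplus\sum_i M_n(\mathcal{A}_i))_\infty$ is checked coordinatewise.

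For the implication that injectivity of every $\mathcal{A}_i$ forces injectivity of $\mathcal{A}$, I would start from $E\subseteq F$ and a c.p. map $\varphi:E\to\mathcal{A}$, push forward to each coordinate by setting $\varphi_i=p_i\circ\varphi:E\to\mathcal{A}_i$ (c.p., as $p_i$ is a $*$-homomorphism), and use injectivity of $\mathcal{A}_i$ to obtain a c.p. extension $\tilde\varphi_i:F\to\mathcal{A}_i$. I then reassemble $\tilde\varphi(f)=(\tilde\varphi_i(f))_{i\in I}$. The point needing care is that $\tilde\varphi$ actually lands in the bounded family $\mathcal{A}$: since $\|\tilde\varphi_i\|=\|\tilde\varphi_i(1_F)\|=\|p_i(\varphi(1_E))\|\le\|\varphi(1_E)\|$ uniformly in $i$, one gets $\sup_i\|\tilde\varphi_i(f)\|\le\|\varphi(1_E)\|\,\|f\|<\infty$. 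Complete positivity of $\tilde\varphi$ is then immediate from the coordinatewise description of positivity in $M_n(\mathcal{A})$ and the complete positivity of each $\tilde\varphi_i$.

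For the converse, I would realize each factor as a corner of $\mathcal{A}$. The compression $E_i:\mathcal{A}\to e_i\mathcal{A}e_i\cong\mathcal{A}_i$, $a\mapsto e_iae_i$, is a c.p. idempotent whose restriction to the $i$-th summand (regarded as a unital operator system with unit $e_i$) is the identity. Hence $\mathcal{A}_i$ is completely positively complemented in $\mathcal{A}$, and a completely complemented operator subsystem of an injective system is again injective: given a c.p. map into $\mathcal{A}_i$, compose with the inclusion $\mathcal{A}_i\hookrightarrow\mathcal{A}$, extend using injectivity of $\mathcal{A}$, and apply $E_i$ to return to $\mathcal{A}_i$. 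The ``in particular'' clause then drops out, since each $M_n(\mathbb{C})=\mathcal{B}(\mathbb{C}^n)$ is unital and injective by Arveson's extension theorem, and an $\ell^\infty$-sum of unital algebras is unital.

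The main obstacle I anticipate is bookkeeping around units. In the converse the coordinate projection $e_i$ is not the unit of $\mathcal{A}$, so one must consistently treat $\mathcal{A}_i$ as a unital operator system with its own unit $e_i$ and verify that $E_i$ is unital in that sense; in the forward direction the uniform bound on $\|\tilde\varphi_i\|$ is precisely what keeps the reassembled map inside the $\ell^\infty$-direct sum rather than the larger full product. Both subtleties reduce to the identity $\|\psi\|_{\mathrm{cb}}=\|\psi(1)\|$ for c.p. maps and to the coordinatewise description of the order structure and matrix amplifications of $(\oplus\sum_{i\in I}\mathcal{A}_i)_\infty$, so I expect no essential difficulty beyond this routine verification.
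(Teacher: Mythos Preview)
The paper does not supply its own proof of this lemma; it simply refers the reader to Proposition~1.48 of \cite{pisier}. Your argument is correct and is the standard one: for the forward direction you push a c.p.\ map to coordinates via $p_i$, extend each $\varphi_i$ inside $\mathcal{A}_i$, and reassemble, with the uniform bound $\|\tilde\varphi_i\|=\|\tilde\varphi_i(1_F)\|=\|\varphi_i(1_E)\|\le\|\varphi(1_E)\|$ guaranteeing that the result lands in the $\ell^\infty$-sum; for the converse you note that each $\mathcal{A}_i$ is a c.p.-complemented corner of $\mathcal{A}$ via $a\mapsto e_iae_i$. There is therefore nothing in the paper itself to compare against; your write-up would simply supply the details the authors chose to cite away. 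One minor caveat: your converse via the coordinate projections $e_i$ tacitly assumes each $\mathcal{A}_i$ is unital so that $e_i$ exists, but this is the natural setting for injectivity and is certainly satisfied in the intended application to $M_n(\mathbb{C})$.
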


For a proof of the following Lemma, we refer to Theorem 4.11 in \cite{pisier}
\begin{lem}\label{4}
There is a surjective unital completely positive map
\[ P_G : L(G) \otimes_{\max} L(G) \rightarrow C^*(G). \]
 Moreover, for all \( a, b \in L(G) \), where \( a(\delta_e) = \sum_{t\in G} a(t) \delta_t\) and \( b(\delta_e) = \sum_{t\in G} b(t) \delta_t\), we have
\[ P_G(a \otimes b) = \sum_{t\in G} a(t)b(t) U_G(t). \]

In particular, the restriction \( P_G|_{L(G) \otimes \mathbb{C}} \rightarrow C^*(G) \) is an into unital completely positive map and
\( \text{Ran}(P_G|_{L(G) \otimes \mathbb{C}}) \) contains \( \{U_G(t) \mid t \in G\} \).
\end{lem}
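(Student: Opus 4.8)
The plan is to realize $P_G$ as a Stinespring compression of a single $*$-representation of $L(G)\otimes_{\max}L(G)$, assembled from two commuting representations of $L(G)$ via Fell's absorption principle. Fix the universal representation $\pi_u:C^{*}(G)\to B(H_u)$ and write $u_t:=\pi_u(U_G(t))$, so that $C^{*}(G)$ is identified with $\overline{\mathrm{span}}\{u_t\}\subseteq B(H_u)$. I would work on $K:=H_u\otimes\ell^2(G)$ and introduce the diagonal unitary $W\in B(K)$ given by $W(\xi\otimes\delta_t)=u_t\xi\otimes\delta_t$; a direct check yields the absorption identity $W(1\otimes\lambda(s))W^{*}=u_s\otimes\lambda(s)$, where $\lambda$ is the left regular representation generating $L(G)$.

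First I would produce the two commuting representations. Define $\pi_1:L(G)\to B(K)$ by $\pi_1(x)=W(1\otimes x)W^{*}$; by the absorption identity $\pi_1(\lambda(s))=u_s\otimes\lambda(s)$, and $\pi_1$ is a normal unital $*$-representation. For the second copy, let $\beta:L(G)\to L(G)'$ be the canonical normal $*$-isomorphism of $L(G)$ onto its commutant sending $\lambda(r)$ to the right-translation unitary $\rho(r)$ (with $\rho(r)\delta_t=\delta_{tr^{-1}}$), and set $\pi_2(x)=1\otimes\beta(x)$. Since each $\beta(x)$ lies in $\lambda(G)'$, the ranges of $\pi_1$ and $\pi_2$ commute, so the universal property of $\otimes_{\max}$ produces a $*$-representation $\Pi:L(G)\otimes_{\max}L(G)\to B(K)$ with $\Pi(x\otimes y)=\pi_1(x)\pi_2(y)$. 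Finally, with the isometry $V:H_u\to K$, $V\xi=\xi\otimes\delta_e$, I put $P_G:=V^{*}\Pi(\cdot)V$, which is automatically unital and completely positive.

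Next I would verify the formula and pin down the range. On group elements, $\Pi(\lambda(s)\otimes\lambda(r))=u_s\otimes\lambda(s)\rho(r)$, and using $\lambda(s)\rho(r)\delta_e=\delta_{sr^{-1}}$ one gets $P_G(\lambda(s)\otimes\lambda(r))=\delta_{s,r}\,u_s$, matching $\sum_t a(t)b(t)U_G(t)$ for $a=\lambda(s)$, $b=\lambda(r)$. The same computation carried out for arbitrary $x,y\in L(G)$ gives $P_G(x\otimes y)=\pi_u\bigl(\sum_t x(t)y(t)U_G(t)\bigr)$, where the series converges in $C^{*}(G)$-norm since $\sum_t|x(t)y(t)|\le\|x\delta_e\|_2\,\|y\delta_e\|_2\le\|x\|\,\|y\|$ by Cauchy--Schwarz. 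Because $L(G)\odot L(G)$ is norm-dense in $L(G)\otimes_{\max}L(G)$, $P_G$ is norm-continuous, and $C^{*}(G)$ is norm-closed in $B(H_u)$, it follows that $\mathrm{Ran}(P_G)\subseteq C^{*}(G)$; this is the step that confirms $P_G$ genuinely takes values in $C^{*}(G)$ rather than merely in $B(H_u)$. For surjectivity, $t\mapsto\lambda(t)\otimes\lambda(t)$ is a unitary representation of $G$ inside $L(G)\otimes_{\max}L(G)$, so the universal property of $C^{*}(G)$ extends it to a $*$-homomorphism $j:C^{*}(G)\to L(G)\otimes_{\max}L(G)$ with $j(U_G(t))=\lambda(t)\otimes\lambda(t)$; the formula above gives $P_G\circ j=\mathrm{id}_{C^{*}(G)}$, so $j$ is a completely positive section, $P_G$ is onto, and each generator $U_G(t)=P_G(\lambda(t)\otimes\lambda(t))$ lies in the range.

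I expect the main obstacle to be the combination of two things: producing two genuinely commuting \emph{normal} representations of the von Neumann algebra $L(G)$ (Fell absorption is exactly what makes $\pi_1$ normal, and the standard-form isomorphism $\beta$ onto the commutant is what makes $\pi_2$ available), and then matching the abstract compression $V^{*}\Pi(\cdot)V$ with the explicit Hadamard-product formula. The subtlety worth flagging is that one \emph{cannot} verify the formula by density from the group algebra, since $\mathbb{C}[G]\odot\mathbb{C}[G]$ is not norm-dense in $L(G)\otimes_{\max}L(G)$; the identity must be obtained by the direct computation valid for all $x,y\in L(G)$, with the Cauchy--Schwarz estimate simultaneously guaranteeing convergence of the defining series and confining the values to $C^{*}(G)$.
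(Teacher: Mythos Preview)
The paper does not give its own proof of this lemma; it simply cites Theorem~4.11 in Pisier's book. Your construction via Fell absorption is the standard argument (and essentially the one in Pisier): the two commuting normal representations $\pi_1(x)=W(1\otimes x)W^{*}$ and $\pi_2(y)=1\otimes\beta(y)$, the resulting $*$-representation $\Pi$ of $L(G)\otimes_{\max}L(G)$, and the compression by $V\xi=\xi\otimes\delta_e$ are all set up correctly. Your verification of the formula for arbitrary $x,y\in L(G)$ (rather than only on $\mathbb{C}[G]\odot\mathbb{C}[G]$), the Cauchy--Schwarz bound forcing the values into $C^{*}(G)$, and the surjectivity via the diagonal section $j(U_G(t))=\lambda(t)\otimes\lambda(t)$ are all sound.

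What you do not address is the final ``in particular'' clause about $P_G|_{L(G)\otimes\mathbb{C}}$, and you should be aware that this clause, as literally written in the paper, is inconsistent with the displayed formula: taking $b=1$ gives $b(t)=\delta_{t,e}$, so $P_G(a\otimes 1)=a(e)\cdot 1$ is scalar-valued, hence neither injective nor does its range contain any $U_G(t)$ with $t\neq e$. The way the paper actually \emph{uses} this lemma (in the proof of Lemma~\ref{5}) only needs a u.c.p.\ map whose range contains the generators $U_G(t)$, and your surjectivity argument already supplies preimages $\lambda(t)\otimes\lambda(t)$ in the full tensor product. So the substantive content is covered by your proof; the discrepancy lies in the paper's phrasing of the ``in particular'' sentence, not in your argument.
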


\begin{lem}\label{5}
There exists a  {\rm (nonseparable)} Hilbert space \( \mathcal{K} \) and a unital completely positive map \( \theta : \mathcal{B}(\mathcal{K}) \rightarrow C^*(F_\infty) \) such that
\( \text{Ran}(\theta) \text{ contains } \{U_{F_\infty}(t) \mid t \in F_\infty\}. \)
\end{lem}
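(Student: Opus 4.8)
The plan is to realize $\theta$ as a composition
\[
\mathcal{B}(\mathcal{K}) \xrightarrow{\ \mathcal{E}\ } \mathcal{C} \xrightarrow{\ \Theta\ } C^{*}(F_\infty),
\]
where $\mathcal{C}$ is an injective $C^{*}$-algebra concretely sitting inside $\mathcal{B}(\mathcal{K})$, $\mathcal{E}$ is a conditional expectation supplied by the injectivity of $\mathcal{C}$, and $\Theta$ is a unital completely positive map whose range already contains every $U_{F_\infty}(t)$. Since $\mathcal{E}$ maps $\mathcal{B}(\mathcal{K})$ onto $\mathcal{C}$ and a composition of u.c.p. maps is u.c.p., this will give $\mathrm{Ran}(\theta)=\mathrm{Ran}(\Theta)\supseteq\{U_{F_\infty}(t)\}$, so the lemma reduces to producing the pair $(\mathcal{C},\Theta)$.

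For the injective intermediate I would use the residual finite-dimensionality of $C^{*}(F_\infty)$ (a theorem of Choi): the finite-dimensional unitary representations $\sigma$ of $F_\infty$ separate points and compute the norm, so $\iota:=\bigoplus_{\sigma}\sigma$ is an isometric $*$-embedding of $C^{*}(F_\infty)$ into $\mathcal{C}:=\bigl(\oplus\sum_{\sigma}M_{\dim\sigma}(\mathbb{C})\bigr)_{\infty}$. By Lemma~\ref{3} this $\mathcal{C}$ is injective, and it acts on $\mathcal{K}:=\oplus\sum_{\sigma}\mathbb{C}^{\dim\sigma}$; because $F_\infty$ has uncountably many inequivalent finite-dimensional representations, $\mathcal{K}$ is nonseparable, which is exactly the nonseparability asserted in the statement. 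Injectivity of $\mathcal{C}$ then furnishes a conditional expectation $\mathcal{E}\colon \mathcal{B}(\mathcal{K})\to\mathcal{C}$.

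It remains to construct $\Theta\colon \mathcal{C}\to C^{*}(F_\infty)$, unital completely positive, with $U_{F_\infty}(t)\in\mathrm{Ran}(\Theta)$ for every $t$. Here I would transport the mechanism behind Lemma~\ref{4}: in that lemma the generators are recovered through the Fourier/trace expansion $P_{F_\infty}(\lambda(t)\otimes\lambda(t))=U_{F_\infty}(t)$, whose absolute convergence rests on the group unitaries $\lambda(t)\in L(F_\infty)$ being orthonormal for the canonical trace. The idea is to run the same argument with the normalized traces on the matrix blocks of $\mathcal{C}$ in the role of the trace on $L(F_\infty)$ and the images $\iota(U_{F_\infty}(t))$ in the role of the $\lambda(t)$, so that a diagonal, trace-weighted map out of (a tensor power of) $\mathcal{C}$ lands in $C^{*}(F_\infty)$ and sends the diagonal unitaries back to the generators.

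The main obstacle is precisely this last step. Unlike $\lambda(t)\in L(F_\infty)$, the images $\iota(U_{F_\infty}(t))$ are not orthonormal for the block traces: the relevant inner products are the values $\varphi(s^{-1}t)$ of a positive-definite function that is only approximately $\delta_{e}$, so the series defining the naive analogue of $P_{F_\infty}$ need not converge and the map need not be well defined. I expect the resolution to require a careful choice of the representing family and of weights — for instance passing through the regular representations of the finite quotients $F_\infty/N_{k}$ with $N_{k}\downarrow\{e\}$, where the normalized characters are exactly the indicator functions $[\,t\in N_{k}\,]$, together with a limiting refinement — so that complete positivity and convergence hold simultaneously. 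It is worth emphasizing that we only need each $U_{F_\infty}(t)$ to lie in the range of $\Theta$ and are free to choose its $\Theta$-preimage; we must not, and do not, demand a u.c.p. left inverse of $\iota$, since such a splitting $\Theta\circ\iota=\mathrm{id}$ would exhibit $C^{*}(F_\infty)$ as a u.c.p.-complemented subalgebra of the injective algebra $\mathcal{C}$, forcing it to be injective (in particular WEP), which it is not.
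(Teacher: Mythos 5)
Your first half matches the paper's opening move (a conditional expectation from $\mathcal{B}(\mathcal{K})$ onto an injective $\ell^\infty$-sum of matrix algebras, as in Lemma~\ref{3}), and your observation that everything reduces to producing a u.c.p.\ map $\Theta:\mathcal{C}\to C^*(F_\infty)$ with all $U_{F_\infty}(t)$ in its range is correct. But that reduction is the entire content of the lemma, and you never carry it out: your last paragraph explicitly concedes that the ``naive analogue of $P_{F_\infty}$'' out of the matrix blocks need not converge or be well defined, and the proposed repair (regular representations of finite quotients, weights, ``a limiting refinement'') is a direction, not an argument. As it stands this is a genuine gap, not a completed proof.

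The missing idea is the paper's insertion of two further arrows between the matrix-block algebra and $C^*(F_\infty)$. First, one passes from $(\oplus\sum_n M_n(\mathbb{C}))_\infty$ to the tracial ultraproduct $\prod_{n}^{\omega}M_n(\mathbb{C})$ via the quotient $*$-homomorphism. Second, one uses the embedding $L(F_\infty)\hookrightarrow \prod_{n}^{\omega}M_n(\mathbb{C})$ and, crucially, the \emph{trace-preserving conditional expectation} $E_2$ onto $L(F_\infty)$ --- this exists for any von Neumann subalgebra of a finite von Neumann algebra with trace, so the complementation happens at the von Neumann level for $L(F_\infty)$ and never requires the u.c.p.\ splitting of $\iota$ that you rightly rule out for $C^*(F_\infty)$. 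Only then does one apply the map of Lemma~\ref{4} from $L(F_\infty)$ into $C^*(F_\infty)$; there the group unitaries $\lambda(t)$ \emph{are} orthonormal for the canonical trace and Cauchy--Schwarz on the two $\ell^2$-coefficient sequences gives the absolute convergence whose failure you correctly identify as the obstruction in your block-trace setting. In short, your difficulty is real, and the known resolution is not to fix the series over the finite-dimensional blocks but to re-route through $\prod_n^\omega M_n(\mathbb{C})$ and $L(F_\infty)$; without that detour (or an equally effective substitute, which you do not supply) the proof is incomplete.
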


\begin{proof}
By Lemma \ref{3}, there exists a (nonseparable) Hilbert space \( \mathcal{K} \) and a conditional expectation \( E_1 \) from \( \mathcal{B}(\mathcal{K}) \) onto
\( (\oplus\sum_{n=1}^{\infty} M_n(\mathbb{C}))_{\infty}. \)
By definition, there exists an onto $^*$-homomorphism
\[ \pi : (\oplus\sum_{n=1}^{\infty} M_n(\mathbb{C}))_{\infty} \rightarrow \prod_{n=1}^{\omega}M_n(\mathbb{C}). \]

Since \( L(F_\infty) \) can be embedded into \( \prod_{n=1}^{\omega}M_n(\mathbb{C}) \), there is a conditional expectation \( E_2 \) from \( \prod_{n=1}^{\omega}M_n(\mathbb{C}) \) onto \( L(F_\infty) \).
By Lemma \ref{4}, there exists a unital completely positive map \( P_{F_\infty} : L(F_\infty) \rightarrow C^*(F_\infty) \) such that \( \text{Ran}(P_{F_\infty}) \) contains \( \{U_{F_\infty}(t) \mid t \in F_\infty\} \). Let $\theta=P_{F_\infty}\circ E_2\circ\pi\circ E_1$. Then $\theta$ satisfies Lemma \ref{5}.
\end{proof}

\begin{thm}\label{7}
Let \(\phi : \mathcal{A} \rightarrow \mathcal{M}\) be a unital completely bounded representation from a separable \(C^*\)-algebra \(\mathcal{A}\) into a separable von Neumann algebra \(\mathcal{M}\) with QWEP. Then there exists a positive invertible operator \(S \in \mathcal{M}\) such that \(S\phi(\cdot)S^{-1}\) is a \(*\)-representation.
\end{thm}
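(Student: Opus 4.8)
The plan is to invoke Lemma~\ref{three equivalences} and reduce the statement to producing a positive invertible \(T\in\mathcal{M}\) with \(T\phi(a)=\phi(a^{*})^{*}T\) for all \(a\in\mathcal{A}\); the whole point of this second route is to arrange that such a \(T\), obtained from the general factorization Lemma~\ref{2}, lands in \(\mathcal{M}\) automatically by feeding the commutant \(\mathcal{M}'\) into the construction. Realize \(\mathcal{M}\subseteq\mathcal{B}(\mathcal{H})\) faithfully with \(\mathcal{H}\) separable (using separability of \(\mathcal{M}\)). Since \(\mathcal{A}\) is separable, fix a surjective unital \(*\)-homomorphism \(q:C^{*}(F_{\infty})\to\mathcal{A}\) and set \(\phi_{2}=\phi\circ q\), a unital completely bounded homomorphism. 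Let \(\mathcal{K}\) and \(\theta:\mathcal{B}(\mathcal{K})\to C^{*}(F_{\infty})\) be the nonseparable Hilbert space and the u.c.p.\ map of Lemma~\ref{5}, whose range contains every generator \(U_{F_{\infty}}(t)\), and write \(\psi=\phi_{2}\circ\theta:\mathcal{B}(\mathcal{K})\to\mathcal{M}\). I would then consider the unital linear map
\[
\Psi:\mathcal{B}(\mathcal{K})\otimes_{\max}\mathcal{M}'\longrightarrow\mathcal{B}(\mathcal{H}),\qquad \Psi(w\otimes b)=\psi(w)\,b,
\]
which is well defined because \(\psi(w)\in\mathcal{M}\) commutes with \(b\in\mathcal{M}'\). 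Granting that \(\Psi\) is completely bounded (the crux, discussed below), Lemma~\ref{2} yields a positive invertible \(T\in\mathcal{B}(\mathcal{H})\) with \(T\Psi(d)=\Psi(d^{*})^{*}T\) for all \(d\). Taking \(d=1\otimes b\) gives \(Tb=bT\) for every \(b\in\mathcal{M}'\), so \(T\in\mathcal{M}''=\mathcal{M}\) by the double commutant theorem; taking \(d=w\otimes 1\) with \(\theta(w)=U_{F_{\infty}}(t)\), and using that the u.c.p.\ map \(\theta\) is \(*\)-preserving, gives \(T\phi_{2}(U_{F_{\infty}}(t))=\phi_{2}(U_{F_{\infty}}(t)^{*})^{*}T\).

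Both sides of \(T\phi_{2}(x)=\phi_{2}(x^{*})^{*}T\) are linear and norm-continuous in \(x\), and the linear span of \(\{U_{F_{\infty}}(t)\}\) is dense in \(C^{*}(F_{\infty})\), so the identity holds for all \(x\in C^{*}(F_{\infty})\). Given \(a\in\mathcal{A}\), pick \(x\) with \(q(x)=a\); then \(q(x^{*})=a^{*}\), \(\phi(a)=\phi_{2}(x)\) and \(\phi(a^{*})=\phi_{2}(x^{*})\), whence \(T\phi(a)=\phi(a^{*})^{*}T\). With \(T\in\mathcal{M}\) positive invertible, Lemma~\ref{three equivalences} applied with \(S=T^{1/2}\) shows that \(S\phi(\cdot)S^{-1}\) is a \(*\)-representation, which completes the proof.

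The hard part will be the complete boundedness of \(\Psi\); this is the only place the hypotheses on \(\mathcal{M}\) are used, and it plays the role that Pop's Theorem~\ref{Pop} plays in the first proof. Note that \(\Psi=\Xi\circ(\theta\otimes\mathrm{id})\), where \(\theta\otimes\mathrm{id}\) is u.c.p.\ and \(\Xi:C^{*}(F_{\infty})\otimes_{\max}\mathcal{M}'\to\mathcal{B}(\mathcal{H})\), \(\Xi(x\otimes b)=\phi_{2}(x)b\), is the full maximal multiplication map; thus it suffices to bound the weaker object \(\Psi\), a completely bounded estimate whose noncommutative factor is now the \emph{injective} algebra \(\mathcal{B}(\mathcal{K})\), rather than to establish complete boundedness of \(\Xi\) (which is exactly Theorem~\ref{Pop}). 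The content is to build an \(\mathcal{M}'\)-equivariant Wittstock dilation of \(\psi\), namely a \(*\)-representation \(\sigma\) of \(\mathcal{B}(\mathcal{K})\) and operators \(X,Y\) with \(\psi(\cdot)=X\sigma(\cdot)Y\) intertwining the two \(\mathcal{M}'\)-actions, after which the commuting-pair description of the maximal norm in Theorem~\ref{Kadison} bounds \(\|\Psi\|_{\mathrm{cb}}\) by \(\|\phi\|_{\mathrm{cb}}\). Here the injectivity of \(\mathcal{B}(\mathcal{K})\) (Lemma~\ref{3} and Lemma~\ref{5}) is essential, since it supplies the Arveson-type extensions needed to construct the equivariant dilation, while \(\mathcal{M}'\) being QWEP (Theorem~\ref{Ozawa1}(7)) and \(C^{*}(F_{\infty})\) having the lifting property (Theorem~\ref{Ozawa}) control the passage between the minimal and maximal tensor norms; separability of \(\mathcal{M}\) enters precisely so that \(L(F_{\infty})\) embeds into \(\prod_{n=1}^{\omega}M_{n}(\mathbb{C})\) as in Lemma~\ref{5} and \(\mathcal{H}\) may be taken separable. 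Carrying out this maximal-tensor estimate through the injective algebra \(\mathcal{B}(\mathcal{K})\), rather than quoting Theorem~\ref{Pop}, is the technical heart of the second approach, and the step I expect to be most delicate.
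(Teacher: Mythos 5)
Your reduction scheme is sound and matches the skeleton of the paper's second proof: obtain a positive invertible \(T\) with \(T\Psi(d)=\Psi(d^{*})^{*}T\) from Lemma~\ref{2}, force \(T\in\mathcal{M}\) by evaluating at \(1\otimes b\) for \(b\in\mathcal{M}'\), recover \(T\phi_{2}(U_{F_{\infty}}(t))=\phi_{2}(U_{F_{\infty}}(t)^{*})^{*}T\) by evaluating at \(w\otimes 1\) with \(\theta(w)=U_{F_{\infty}}(t)\), extend by linearity, density and surjectivity of \(q\), and finish with Lemma~\ref{three equivalences}. All of those steps are correct. The genuine gap is exactly where you flag it: the complete boundedness of \(\Psi\) on \(\mathcal{B}(\mathcal{K})\otimes_{\max}\mathcal{M}'\) is never established, and the strategy you sketch for it --- an ``\(\mathcal{M}'\)-equivariant Wittstock dilation'' of the merely completely bounded (non-multiplicative) map \(\psi=\phi_{2}\circ\theta\), built from injectivity of \(\mathcal{B}(\mathcal{K})\) --- is not a known argument and is dangerously close to circular. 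Extending \(w\otimes b\mapsto\psi(w)b\) boundedly to the maximal tensor product amounts to \(\psi\) (equivalently \(\phi_{2}\)) being decomposable as a map \emph{into} \(\mathcal{M}\), and the chain of equivalences proved in Section~3 shows that this is equivalent to \(\phi\) being similar to a \(*\)-representation inside \(\mathcal{M}\), i.e., to the theorem itself. Injectivity of the \emph{domain} \(\mathcal{B}(\mathcal{K})\) does not yield decomposability into a non-injective range; the hypotheses that actually carry the load (the LP of \(C^{*}(F_{\infty})\) and QWEP of \(\mathcal{M}\)) appear in your sketch only as a vague remark about ``controlling the passage between the minimal and maximal tensor norms.''

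The paper closes this gap by ordering the tensor legs so that Pop's Theorem~\ref{Pop} applies verbatim: the completely bounded \emph{homomorphism} \(\phi_{2}\) on the LP algebra \(C^{*}(F_{\infty})\) sits on the first leg and the u.c.p.\ identity of \(\mathcal{B}(\mathcal{K})\) on the second, giving a completely bounded map \(\phi_{2}\otimes\mathrm{id}:C^{*}(F_{\infty})\otimes_{\max}\mathcal{B}(\mathcal{K})\to\mathcal{M}\otimes_{\max}\mathcal{B}(\mathcal{K})\); this is followed by the u.c.p.\ map \(\mathrm{id}\otimes\theta\) and the \(*\)-representation \(x\otimes y\mapsto x\varphi(y)\), where \(\varphi\) maps \(C^{*}(F_{\infty})\) onto a weakly dense \(C^{*}\)-subalgebra of \(\mathcal{M}'\) (this is also where separability of \(\mathcal{M}\) is used). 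Note that your own factorization \(\Psi=\Xi\circ(\theta\otimes\mathrm{id})\) already repairs the argument if you simply quote Theorem~\ref{Pop} for \(\Xi\) (with \(\mathcal{A}_{1}=C^{*}(F_{\infty})\), \(\mathcal{B}_{1}=\mathcal{M}\), and \(\mathrm{id}_{\mathcal{M}'}\) as the u.c.p.\ map) --- but then the detour through \(\mathcal{B}(\mathcal{K})\) and \(\theta\) is redundant and you have reproduced the Section~3 proof rather than a second approach. As written, the step you yourself call the technical heart is missing.
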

\begin{proof}
We may assume \(\mathcal{A}=C^*(F_\infty)\) and \(\mathcal{M} \) act on \(\mathcal{H}\) in standard form. There exists a weakly dense \(C^*\)-subalgebra \( \mathcal{B}\subseteq \mathcal{M'}\) and a surjective $\ast$-homomorphism $\varphi: C^*(F_\infty)\rightarrow\mathcal{B}$. By Lemma \ref{5}, there exists a Hilbert space \(\mathcal{K}\) and a unital completely positive map \(\theta : \mathcal{B}(\mathcal{K}) \rightarrow C^*(F_\infty)\) such that \( \text{Ran}(\theta) \) contains \( \{U_{F_\infty}(t) \mid t \in F_\infty\} \).

Now consider the following diagram:
\[
C^*(F_\infty) \otimes_{max} \mathcal{B}(\mathcal{K}) \overset{\phi \otimes id}{\longrightarrow} \mathcal{M} \otimes_{max} \mathcal{B}(\mathcal{K}) \overset{id \otimes \theta}{\longrightarrow} \mathcal{M} \otimes_{max} C^*(F_\infty) \overset{\pi}{\longrightarrow} \mathcal{B}(\mathcal{H}).
\]
where \(\pi(x \otimes b) = x \varphi(b)\) is a unital bounded \(*\)-representation. Note that \(id \otimes \theta\) is a unital completely positive map. By Theorem~\ref{Pop}, \(\phi \otimes id\) extends to a completely bounded representation.

Define $$\Phi: C^*(F_\infty)\otimes_{max}\mathcal{B}(\mathcal{K})\rightarrow\mathcal{B}(\mathcal{H})$$ by $$\Phi(a\otimes x)=\phi(a)\varphi(\theta(x)).$$ Then $\Phi=\pi\circ (id\otimes \theta)\circ (\phi\otimes id)$ is a unital completely bounded linear map. By Lemma \ref{2}, there exists an invertible positive operator $T\in\mathcal{B}(\mathcal{H})$ such that $T\Phi(a\otimes x)=\Phi(a^*\otimes x^*)^*T$. Let $a=1$. Then $$T\varphi(\theta(x))=\varphi(\theta(x))T$$ for all $x\in\mathcal{B}(\mathcal{K})$. Note that $Ran(\theta)$ contains \( \{U_{F_\infty}(t) \mid t \in F_\infty\} \). Therefore, $Ran\varphi(\theta(x))$ generate $\mathcal{M}'$. Hence $T\in(\mathcal{M}')'=\mathcal{M}$. Let $x=1$. Then $T \phi(a) = \phi(a^*)^* T$ for all \(a \in A\). Put $S=T^{1/2}\in \mathcal{M}$. By Lemma~\ref{three equivalences}, $T\phi(\cdot)T^{-1}$ is a $\ast$-representation.
\end{proof}

\section{The classical similarity problem revisited}
The following lemma is due to Uffe Haagerup~\cite{Haagerup}.
\begin{lem}{\rm (Haagerup)}\label{Haagerup11}
Let \(\varphi\) be a cyclic bounded representation of a \(C^*\)-algebra \(\mathcal{A}\) on a Hilbert space \(\mathcal{H}\). Then there exists an invertible positive operator \(T \in \mathcal{B}(\mathcal{H})\) and a \(*\)-representation \(\pi\) of \(\mathcal{A}\) into \(\mathcal{B}(\mathcal{H})\) such that
\[
\|T\| \cdot \|T^{-1}\| \leq \|\varphi\|^3 \quad \text{and} \quad \varphi(a) = T^{-1} \pi(a) T \quad \forall a \in A.
\]
\end{lem}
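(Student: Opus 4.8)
The plan is to reduce the statement, via Lemma~\ref{three equivalences}, to producing a single positive invertible \(T\in\mathcal{B}(\mathcal{H})\) with \(T\varphi(a)=\varphi(a^{*})^{*}T\) for all \(a\), while keeping track of \(\|T\|\,\|T^{-1}\|\). First I would pass to the unitization if necessary so that \(\varphi\) is unital, normalize the cyclic vector so that \(\|\xi\|=1\), and set \(C=\|\varphi\|\). Finding such a \(T=G\) is the same as exhibiting a bounded invertible change of inner product \(\langle G\,\cdot,\cdot\rangle\), equivalent to the original, in which \(\varphi\) becomes a \(*\)-representation; since unitaries span a unital \(C^{*}\)-algebra, this is in turn equivalent to \(\varphi(u)^{*}G\varphi(u)=G\) for every unitary \(u\in\mathcal{A}\). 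The natural candidate, the pullback form \(s(x,y)=\langle\varphi(x)\xi,\varphi(y)\xi\rangle\), is positive semidefinite but is \emph{not} of the form \((x,y)\mapsto f(y^{*}x)\) for a state \(f\), precisely because \(\varphi(y)^{*}\varphi(x)\neq\varphi(y^{*}x)\); the whole problem is to correct this discrepancy by a bounded invertible operator with controlled condition number.

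Two naive strategies fail, and recognizing why points to the right tool. Averaging the operators \(\varphi(u)^{*}\varphi(u)\) over the unitary group would produce a fixed point \(G\) by a Markov--Kakutani argument, but \(\mathcal{U}(\mathcal{A})\) is not amenable in general, so this is unavailable. Likewise, the elementary matrix estimate that would yield \(\|\varphi\|_{\mathrm{cb}}\le C^{2}\) breaks down, because it forces one to compare \(\sum_{j}\|\varphi(b_{j})\xi\|^{2}\) with \(\|\sum_{j}b_{j}^{*}b_{j}\|\), and the vector functional \(a\mapsto\langle\varphi(a)\xi,\xi\rangle\) is not positive. The key step I would use to circumvent this is Haagerup's non-commutative Grothendieck inequality, applied to the bounded bilinear form on \(\mathcal{A}\times\mathcal{A}\) attached to \(\varphi\) and \(\xi\): it furnishes states \(f,g\) on \(\mathcal{A}\) dominating that form, which gives a two-sided comparison of \(\|\varphi(\cdot)\xi\|^{2}\) with the GNS forms of \(f\) and \(g\), the comparison ratio being a fixed power of \(C\). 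Equivalently, this produces a bounded invertible \(W\) intertwining \(\varphi\) with the GNS \(*\)-representation \(\pi\) of a dominating state, \(W\varphi(a)W^{-1}=\pi(a)\), and in particular shows that \(\varphi\) is completely bounded.

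Once complete boundedness is in hand, I would feed \(\varphi\) into the machinery already developed in the paper: Lemma~\ref{1} gives a factorization \(\varphi(a)=X\pi(a)Y\) with \(XY=I\), and the construction in the proof of Lemma~\ref{2} then manufactures the positive invertible \(T=|X^{*}|^{-2}\) satisfying \(T\varphi(a)=\varphi(a^{*})^{*}T\); equivalently one takes \(T=|W|\) directly from the intertwiner, so that \(T\varphi(\cdot)T^{-1}\) is unitarily equivalent to \(\pi\) and hence a \(*\)-representation. Since \(\|T\|\,\|T^{-1}\|=\|W\|\,\|W^{-1}\|\), the quantitative claim reduces to bounding the condition number of the intertwiner, and I would track the constants through the Grothendieck step to arrive at \(\|W\|\,\|W^{-1}\|\le C^{3}=\|\varphi\|^{3}\). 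The hard part is exactly this quantitative intertwining: producing \(W\) (equivalently, proving complete boundedness) from cyclicity alone, and doing so with the sharp exponent \(3\), is where the non-positivity of \(a\mapsto\langle\varphi(a)\xi,\xi\rangle\) forces one past elementary operator estimates and into Haagerup's non-commutative Grothendieck inequality.
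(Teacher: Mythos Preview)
The paper does not give a proof of this lemma; it is stated with attribution to Haagerup~\cite{Haagerup} and used as a black box. There is thus no ``paper's own proof'' to compare against, but your outline is essentially Haagerup's original argument from the cited reference, and it is correct: the decisive step is indeed the non-commutative Grothendieck inequality applied to the bounded bilinear form on \(\mathcal{A}\times\mathcal{A}\) coming from \(\varphi\) and the cyclic vector \(\xi\), which produces dominating states and hence a bounded invertible intertwiner \(W\) between \(\varphi\) and a GNS \(*\)-representation. Your diagnosis of why the naive approaches fail (non-amenability of \(\mathcal{U}(\mathcal{A})\), non-positivity of \(a\mapsto\langle\varphi(a)\xi,\xi\rangle\)) is exactly right.

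One small remark on organization: routing the final step through Lemmas~\ref{1} and~\ref{2} is a detour. Haagerup does not first establish complete boundedness and then invoke a factorization theorem; he constructs the intertwiner \(W\) directly on the dense subspace \(\varphi(\mathcal{A})\xi\) by comparing \(\|\varphi(a)\xi\|\) with the GNS seminorms of the Grothendieck states, and reads off \(\|W\|\,\|W^{-1}\|\) from those inequalities. Your version is logically equivalent---once you have \(W\), taking \(T=|W|\) (or \((W^{*}W)^{1/2}\)) and applying Lemma~\ref{three equivalences} is the same as what Lemma~\ref{2} does---but the direct construction keeps the constant-tracking transparent, which is what you need for the exponent in \(\|T\|\,\|T^{-1}\|\le\|\varphi\|^{3}\).
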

\begin{thm}
Let \(\phi: C^*(F_\infty) \to \mathcal{M} = \mathcal{B}(l^{2})\) be a bounded representation. Assume that $\mathcal{M}$ acts on \(\mathcal{H}\) and \(\xi\in H\) be a cyclic and separate vector of \(\mathcal{M}\). Let \(\mathcal{B} \subseteq \mathcal{M}'\) be a weakly dense \(C^*\)-algebra. Then the following conditions are equivalent:
\begin{enumerate}
    \item[{\rm (1)}] \(\phi\) is similar to a \(*\)-representation from \(C^*(F_\infty)\) into \(\mathcal{B}(l^{2})\);
    \item[{\rm (2)}] \(\Phi : C^*(F_\infty) \otimes_{\max} \mathcal{B} \to \mathcal{B}(\mathcal{H})\) given by
    \[
    \Phi\left( \sum_{i=1}^n a_i \otimes b_i \right) = \sum_{i=1}^n \phi(a_i) b_i
    \]
    is a bounded representation.
\end{enumerate}
\end{thm}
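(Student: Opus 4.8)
The plan is to prove the two implications separately. The forward direction $(1)\Rightarrow(2)$ is a direct norm estimate, whereas $(2)\Rightarrow(1)$ is the substantive direction: it rests on Haagerup's similarity theorem for \emph{cyclic} bounded representations (Lemma~\ref{Haagerup11}), which requires only boundedness rather than complete boundedness, and this is exactly what allows us to treat a merely bounded $\phi$ into $\mathcal{B}(l^2)$.

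For $(1)\Rightarrow(2)$ I would write $\phi(a)=T^{-1}\pi(a)T$ for a $*$-representation $\pi:C^*(F_\infty)\to\mathcal{B}(l^2)=\mathcal{M}$ and an invertible $T\in\mathcal{M}$. Since $\pi(C^*(F_\infty))\subseteq\mathcal{M}$ and $\mathcal{B}\subseteq\mathcal{M}'$ have commuting ranges, the assignment $\rho(a\otimes b):=\pi(a)b$ defines a genuine $*$-representation of $C^*(F_\infty)\otimes_{\max}\mathcal{B}$, so $\|\rho(u)\|\le\|u\|_{\max}$. As each $b\in\mathcal{B}$ commutes with $T\in\mathcal{M}$, one verifies $\Phi(u)=T^{-1}\rho(u)T$ on the algebraic tensor product, whence $\|\Phi(u)\|\le\|T\|\,\|T^{-1}\|\,\|u\|_{\max}$ and $\Phi$ extends to a bounded representation.

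For $(2)\Rightarrow(1)$ the first key step is to recognize that $\xi$ is cyclic for $\Phi\bigl(C^*(F_\infty)\otimes_{\max}\mathcal{B}\bigr)$. Indeed $\Phi(1\otimes b)=b$, so the closed $\Phi$-cyclic subspace generated by $\xi$ contains $\overline{\mathcal{B}\xi}$; since $\xi$ is separating for $\mathcal{M}$ it is cyclic for $\mathcal{M}'$, and Kaplansky density applied to the weakly dense $C^*$-subalgebra $\mathcal{B}\subseteq\mathcal{M}'$ gives $\overline{\mathcal{B}\xi}=\overline{\mathcal{M}'\xi}=\mathcal{H}$. Thus $\Phi$ is a bounded \emph{cyclic} representation, and Lemma~\ref{Haagerup11} provides a positive invertible $T\in\mathcal{B}(\mathcal{H})$ with $T\Phi(u)=\Phi(u^*)^*T$ for every $u$. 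The second step is to locate $T$ in $\mathcal{M}$ and to peel off the similarity for $\phi$: specializing $u=1\otimes b$ yields $Tb=bT$ for all $b\in\mathcal{B}$, and passing to weak limits gives $T\in(\mathcal{M}')'=\mathcal{M}=\mathcal{B}(l^2)$ by the bicommutant theorem; specializing $u=a\otimes 1$ yields $T\phi(a)=\phi(a^*)^*T$, so the computation of $(3)\Rightarrow(2)$ in Lemma~\ref{three equivalences} shows that $S=T^{1/2}\in\mathcal{B}(l^2)$ conjugates $\phi$ to a $*$-representation into $\mathcal{B}(l^2)$.

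The main obstacle is the cyclicity argument in $(2)\Rightarrow(1)$. Haagerup's lemma is available only for cyclic representations, so the whole strategy hinges on using the separating property of $\xi$ (equivalently, cyclicity of $\xi$ for $\mathcal{M}'$) together with the weak density of $\mathcal{B}$ to force $\xi$ to be cyclic for the range of $\Phi$. This is precisely the device that circumvents the complete-boundedness hypothesis required in the earlier theorems and that ties the statement to the classical Kadison similarity problem for $\mathcal{B}(l^2)$.
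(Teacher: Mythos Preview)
Your proposal is correct and follows essentially the same route as the paper: the norm estimate $\Phi=T^{-1}\rho(\cdot)T$ for $(1)\Rightarrow(2)$, and for $(2)\Rightarrow(1)$ the cyclicity of $\xi$ via $[\mathcal{B}\xi]=[\mathcal{M}'\xi]=\mathcal{H}$, Haagerup's cyclic similarity lemma, the commutant argument to place $T$ in $\mathcal{M}$, and finally Lemma~\ref{three equivalences}. Your write-up is in fact slightly more explicit (naming Kaplansky density and spelling out the $*$-representation $\rho$), but there is no substantive difference from the paper's proof.
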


\begin{proof}
\((1) \Rightarrow (2)\). Note that \(\phi(a) = T^{-1} \pi(a) T \in \mathcal{M} = \mathcal{B}(l^{2})\), where the invertible positive operator \(T\) and \(*\)-representation \(\pi(a) \in M\) for all \(a \in C^*(F_\infty)\), then
\begin{eqnarray*}
\Vert\Phi\left( \sum_{i=1}^n a_i \otimes b_i \right)\Vert
&=& \Vert\sum_{i=1}^n \phi(a_i) b_i\Vert\\
&=& \Vert\sum_{i=1}^n T^{-1} \pi(a_i) T b_i\Vert\\
&=&\Vert\sum_{i=1}^n T^{-1}\pi(a_i) b_i T\Vert\\
&\leq&\Vert T\Vert\Vert T^{-1}\Vert\Vert\sum_{i=1}^n a_i \otimes b_i\Vert_{C^*(F_\infty) \otimes_{\max} \mathcal{B}}.
\end{eqnarray*}
This implies that \(\Phi\) is a bounded representation.

\((2) \Rightarrow (1)\). Since $$[\Phi(C^*(F_{\infty}) \otimes_{\max} \mathcal{B})\xi] \supseteq [\mathcal{B}\xi]=[\mathcal{M}'\xi]=\mathcal{H},$$
$\Phi$ is a bounded cyclic representation. By Lemma \ref{Haagerup11}, there exists an invertible positive operator $T\in\mathcal{B}(\mathcal{H})$ such that $$T\Phi(a\otimes b)=\Phi(a^*\otimes b^*)^*T.$$ Let $a=1$. Then $Tb=bT$ for all $b\in\mathcal{B}$. Since $\mathcal{B}$ is wealy dense in $\mathcal{M}'$, $T\in(\mathcal{M}')'=\mathcal{M}$. Let $b=1$. Then $$T\phi(a)=\phi(a^*)^*T$$ for all $a\in\mathcal{A}$. By Lemma~\ref{three equivalences}, \(\phi\) is similar to a \(*\)-representation from \(C^*(F_\infty)\) into \(\mathcal{B}(l^{2})\).
\end{proof}

\begin{prop}
Let $\rho_{1}$ be a state on \(\mathcal{B}(l^{2})\), $\rho_{2}$ be a state on a nuclear \(C^*\)-algebra \(\mathcal{B}\). Suppose $\phi: C^*(F_\infty) \to \mathcal{B}(l^{2})$ is a bounded representation and \(\Phi : C^*(F_\infty) \otimes \mathcal{B} \to \mathcal{B}(l^{2})\otimes\mathcal{B}\) is given by
    \[
    \Phi\left( \sum_{i=1}^n a_i \otimes b_i \right) = \sum_{i=1}^n \phi(a_i) \otimes b_i.\]
Then $\rho_{1}\otimes\rho_{2}((\sum_{i=1}^n \phi(a_i) \otimes b_i)^*(\sum_{i=1}^n \phi(a_i) \otimes b_i))^{1/2}\leq\Vert\phi\Vert^{3}\Vert\sum_{i=1}^n a_i \otimes b_i\Vert$.
\end{prop}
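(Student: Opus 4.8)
The plan is to pass to the GNS representation of the product state $\rho_1\otimes\rho_2$, which turns the left-hand side into the norm of a single vector, and then to control that norm by combining Haagerup's cubic similarity bound (Lemma~\ref{Haagerup11}) on a suitable cyclic subspace with the nuclearity of $\mathcal{B}$. Write $y=\sum_{i=1}^n\phi(a_i)\otimes b_i$. Let $(\pi_1,\mathcal{H}_1,\xi_1)$ and $(\pi_2,\mathcal{H}_2,\xi_2)$ be the GNS triples of $\rho_1$ on $\mathcal{B}(l^2)$ and of $\rho_2$ on $\mathcal{B}$, so that $\rho_1\otimes\rho_2$ is the vector state of $\xi_1\otimes\xi_2$ in $\pi_1\otimes\pi_2$ acting on $\mathcal{H}_1\otimes\mathcal{H}_2$, with $\|\xi_1\|=\|\xi_2\|=1$. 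Then
\[
(\rho_1\otimes\rho_2)(y^*y)^{1/2}=\Bigl\|(\pi_1\otimes\pi_2)(y)(\xi_1\otimes\xi_2)\Bigr\|=\Bigl\|\sum_{i=1}^n\pi_1(\phi(a_i))\xi_1\otimes\pi_2(b_i)\xi_2\Bigr\|.
\]

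Next I would set $\psi=\pi_1\circ\phi:C^*(F_\infty)\to\mathcal{B}(\mathcal{H}_1)$, a bounded representation with $\|\psi\|\le\|\phi\|$, and restrict attention to the invariant subspace $\mathcal{K}_1=[\psi(C^*(F_\infty))\xi_1]$. Since $\phi$ is unital we have $\xi_1=\psi(1)\xi_1\in\mathcal{K}_1$, so $\psi|_{\mathcal{K}_1}$ is a cyclic bounded representation with cyclic vector $\xi_1$ and norm at most $\|\phi\|$. Applying Lemma~\ref{Haagerup11} furnishes a positive invertible $T\in\mathcal{B}(\mathcal{K}_1)$ and a $*$-representation $\pi$ of $C^*(F_\infty)$ on $\mathcal{K}_1$ with $\|T\|\,\|T^{-1}\|\le\|\phi\|^3$ and $\psi(a)|_{\mathcal{K}_1}=T^{-1}\pi(a)T$ for all $a$.

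Because each $\psi(a_i)\xi_1$ lies in $\mathcal{K}_1$, I can factor the vector as
\[
\sum_{i=1}^n\psi(a_i)\xi_1\otimes\pi_2(b_i)\xi_2=(T^{-1}\otimes 1)\Bigl(\sum_{i=1}^n\pi(a_i)\otimes\pi_2(b_i)\Bigr)(T\xi_1\otimes\xi_2),
\]
whence the norm is bounded by $\|T^{-1}\|\cdot\bigl\|\sum_i\pi(a_i)\otimes\pi_2(b_i)\bigr\|\cdot\|T\|$. Now $a\mapsto\pi(a)\otimes 1$ and $b\mapsto 1\otimes\pi_2(b)$ are commuting $*$-representations, so Theorem~\ref{Kadison} gives $\bigl\|\sum_i\pi(a_i)\otimes\pi_2(b_i)\bigr\|\le\bigl\|\sum_i a_i\otimes b_i\bigr\|_{\max}$, and since $\mathcal{B}$ is nuclear this max-norm coincides with the norm on $C^*(F_\infty)\otimes\mathcal{B}$. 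Combining the estimates yields $(\rho_1\otimes\rho_2)(y^*y)^{1/2}\le\|T\|\,\|T^{-1}\|\,\bigl\|\sum_i a_i\otimes b_i\bigr\|\le\|\phi\|^3\bigl\|\sum_i a_i\otimes b_i\bigr\|$, as desired.

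I expect the main obstacle to be the structural bookkeeping in the middle step: one must restrict $\psi$ to precisely the right cyclic subspace so that Haagerup's Lemma applies with the cubic bound and so that the cyclic vector $\xi_1$ genuinely belongs to that subspace (this is where unitality, or a reduction to the essential subspace, is used). The nuclearity of $\mathcal{B}$ enters at exactly one point, namely to replace the max-norm estimate coming from Theorem~\ref{Kadison} by the given tensor norm; without it one would only obtain a bound against $\|\sum_i a_i\otimes b_i\|_{\max}$.
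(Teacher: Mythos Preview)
Your proposal is correct and follows essentially the same route as the paper: pass to the GNS representation of $\rho_1\otimes\rho_2$, restrict $\pi_1\circ\phi$ to the cyclic subspace generated by $\xi_1$, apply Haagerup's cubic bound (Lemma~\ref{Haagerup11}) there, and factor out $T^{-1}\otimes 1$ and $T\otimes 1$. You are in fact a bit more explicit than the paper at two points---noting that unitality guarantees $\xi_1\in\mathcal{K}_1$, and invoking Theorem~\ref{Kadison} together with the nuclearity of $\mathcal{B}$ to pass from $\|\sum_i\pi(a_i)\otimes\pi_2(b_i)\|$ to $\|\sum_i a_i\otimes b_i\|$---but these are clarifications of the same argument rather than a different strategy.
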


\begin{proof}
Consider the GNS construction, $\pi_{\rho_{1}}: \mathcal{B}(l^{2})\to \mathcal{B}(\mathcal{H}_{1})$ and $\pi_{\rho_{2}}: \mathcal{B}\to \mathcal{B}(\mathcal{H}_{2})$ are \(*\)-representation.  Then $\pi_{\rho_{1}}\otimes\pi_{\rho_{2}}: \mathcal{B}(l^{2})\otimes\mathcal{B}\to\mathcal{B}(\mathcal{H}_{1})\overline{\otimes}\mathcal{B}(\mathcal{H}_{2})$ is a \(*\)-representation.  There exists unit vectors $\xi\in\mathcal{H}_{1}$ and $\eta\in\mathcal{H}_{2}$ such that

\[
\begin{aligned}
& \rho_{1}\otimes\rho_{2}((\sum_{i=1}^n \phi(a_i) \otimes b_i)^*(\sum_{i=1}^n \phi(a_i) \otimes b_i))\\
&= \langle(\sum_{i=1}^n \varphi(a_i) \otimes b_i)^*(\sum_{i=1}^n \phi(a_i) \otimes b_i)(\xi\otimes\eta), \xi\otimes\eta\rangle \\
&= \Vert\sum_{i=1}^n \phi(a_i)\xi \otimes b_i\eta\Vert^{2}.
\end{aligned}
\]

Let $\mathcal{K}_{1}=[\phi(C^*(F_{\infty}))\xi]\subseteq\mathcal{H}_{1}$. By Lemma \ref{Haagerup11}, $\phi(a) = T^{-1} \pi(a) T$, where $T, \pi(a)\in\mathcal{B}(\mathcal{K}_{1})$ and \(\|T\| \cdot \|T^{-1}\| \leq \|\phi\|^3\), so
\begin{eqnarray*}
\Vert\sum_{i=1}^n \phi(a_i)\xi \otimes b_i\eta\Vert^{2}
&\leq& \Vert\sum_{i=1}^n (T^{-1} \pi(a_i) T \otimes b_i)(\xi\otimes\eta)\Vert^{2}\\
&\leq& \Vert\sum_{i=1}^n T^{-1} \pi(a_i) T \otimes b_i\Vert^{2}_{\mathcal{B}(\mathcal{K}_{1})\overline{\otimes}\mathcal{B}(\mathcal{H}_{2})}\\
&\leq&\Vert T^{-1}\Vert^{-2}\Vert T\Vert^{2}\Vert\sum_{i=1}^n \pi(a_i) \otimes b_i\Vert^{2}\\
&\leq&\|\phi\|^6\Vert \sum_{i=1}^na_i \otimes b_i\Vert^{2}.
\end{eqnarray*}
This completes the proof.
\end{proof}

\section{A Concluding Remark}

Combining Theorem~\ref{Pis} and Theorem~\ref{Main} and note that $C^*{(F_\infty)}$ is not nuclear, we conclude that not every von Neumann algebra is QWEP. By Theorem 14.1 of~\cite{pisier}, this implies that not every separable tracial von Neumann algebra can be embedded into $\mathcal{R}^w$, the ultrapower algebra of the hyperfinite type ${\rm II}_1$ factor $\mathcal{R}$. This provides a negative answer to the Connes Embedding Problem.

\section{Appendix}
In the appendix, we give a proof Lemma 2.3 of~\cite{Pop} for reader's convenience. The idea is due to Professor Florin Pop. 

Let $\mathcal{B}$ be a $C^*$-algebra and $\mathcal{J}$ be a closed ideal of $\mathcal{B}$. We will consider $\mathcal{B}$ in its universal representation $\pi_u$ on the Hilbert space $\mathcal{H}$. Then we can identify $\pi_u(\mathcal{B})''$ with $\mathcal{B}^{**}$. There is a central projection $P$ in $\mathcal{B}^{**}$ such that $(I-P)\mathcal{B}^{**}=\mathcal{J}^{**}$, $P\mathcal{B}^{**}=(\mathcal{B}/\mathcal{J})^{**}$. We will therefore view $\mathcal{B}/\mathcal{J}$ universally represented on the Hilbert space $P\mathcal{H}$.

\begin{lem}\label{Appendix}
$\mathcal{B}\cap \mathcal{J}^{**}=\mathcal{J}$.
\end{lem}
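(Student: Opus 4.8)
The plan is to prove the two inclusions $\mathcal{J}\subseteq \mathcal{B}\cap\mathcal{J}^{**}$ and $\mathcal{B}\cap\mathcal{J}^{**}\subseteq\mathcal{J}$ separately, working inside the universal representation where $\mathcal{B}^{**}=\pi_u(\mathcal{B})''$ and $P$ is the central projection with $P\mathcal{B}^{**}=(\mathcal{B}/\mathcal{J})^{**}$ and $(I-P)\mathcal{B}^{**}=\mathcal{J}^{**}$. The first inclusion is the easy direction: $\mathcal{J}\subseteq\mathcal{B}$ by hypothesis, and since $\mathcal{J}$ embeds canonically into its second dual $\mathcal{J}^{**}$ compatibly with the embedding $\mathcal{J}\hookrightarrow\mathcal{B}\hookrightarrow\mathcal{B}^{**}$ (this compatibility is exactly what the central projection $P$ encodes, with $\mathcal{J}^{**}=(I-P)\mathcal{B}^{**}$), every element of $\mathcal{J}$ lies in $\mathcal{B}\cap\mathcal{J}^{**}$.

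For the reverse inclusion, I would take $x\in\mathcal{B}\cap\mathcal{J}^{**}$ and aim to show $x\in\mathcal{J}$. The key observation is that $x\in\mathcal{J}^{**}=(I-P)\mathcal{B}^{**}$ forces $Px=0$, equivalently $Px=xP=0$ since $P$ is central. Now consider the quotient map $\pi:\mathcal{B}\to\mathcal{B}/\mathcal{J}$. Under the identification of $\mathcal{B}/\mathcal{J}$ with its universal representation on $P\mathcal{H}$, the composition of $\pi$ with this representation is implemented by compression to $P$, i.e.\ $\pi(b)$ corresponds to $Pb=bP$ for $b\in\mathcal{B}$. Therefore $Px=0$ says precisely that $\pi(x)=0$, which means $x\in\ker\pi=\mathcal{J}$.

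The main technical point to pin down carefully will be the identification of the quotient map with compression by the central projection $P$. First I would recall that for a closed ideal $\mathcal{J}$ the canonical decomposition $\mathcal{B}^{**}=P\mathcal{B}^{**}\oplus(I-P)\mathcal{B}^{**}$ is a direct sum of von Neumann algebras, and that the surjective normal $*$-homomorphism $b\mapsto Pb$ from $\mathcal{B}^{**}$ onto $P\mathcal{B}^{**}=(\mathcal{B}/\mathcal{J})^{**}$ restricts on $\mathcal{B}$ to the quotient map $\pi$ followed by the canonical inclusion $\mathcal{B}/\mathcal{J}\hookrightarrow(\mathcal{B}/\mathcal{J})^{**}$. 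This is a standard fact about the enveloping von Neumann algebra (the quotient $\mathcal{B}/\mathcal{J}$ is represented on $P\mathcal{H}$ exactly because its universal representation is the sub-representation of $\pi_u$ cut by $P$), so I would either cite it or sketch it from the universal property of $\pi_u$. Once that identification is in hand, the equivalence $x\in\mathcal{J}^{**}\iff Px=0\iff \pi(x)=0\iff x\in\mathcal{J}$ closes the argument.

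I expect the only genuine obstacle to be bookkeeping about these canonical identifications — ensuring the inclusions $\mathcal{J}\hookrightarrow\mathcal{J}^{**}$, $\mathcal{B}\hookrightarrow\mathcal{B}^{**}$, and the quotient $\mathcal{B}\to\mathcal{B}/\mathcal{J}$ are all mutually compatible inside $\mathcal{B}^{**}$ via $P$. No estimates or approximation arguments are needed; the result is essentially a structural consequence of the central-projection description of ideals in the enveloping von Neumann algebra, and the proof reduces to reading off that $x$ is killed by the quotient map iff it is supported under $I-P$.
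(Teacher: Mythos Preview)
Your argument is correct, but it differs in shape from the paper's. The paper argues by contradiction: assuming $b\in\mathcal{B}\cap\mathcal{J}^{**}$ with $b\notin\mathcal{J}$, it invokes Hahn--Banach to produce a functional $\varphi\in\mathcal{B}^{*}$ vanishing on $\mathcal{J}$ with $\varphi(b)\neq 0$; since $\varphi$ factors through $\mathcal{B}/\mathcal{J}$ and every functional on $\mathcal{B}/\mathcal{J}$ in its universal representation on $P\mathcal{H}$ is a vector functional, one has $\varphi(b)=\langle b\xi,\eta\rangle$ with $\xi,\eta\in P\mathcal{H}$, and then $Pb=0$ forces $\varphi(b)=0$, a contradiction. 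You instead bypass Hahn--Banach entirely and read the conclusion off the identification $\rho(b)\leftrightarrow Pb$, so that $Px=0$ directly gives $\rho(x)=0$.

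One point to watch: in the paper's internal logic, the identification $\rho(b)=Pb$ is stated only \emph{after} the lemma, as Corollary~\ref{Appendix2}, and is deduced from it. So your proposed route reverses the order, and you must be sure your justification of the identification does not tacitly use $\mathcal{B}\cap\mathcal{J}^{**}=\mathcal{J}$. Your sketch via the bitranspose $\rho^{**}$ (whose kernel is the weak*-closure $\mathcal{J}^{\perp\perp}=\mathcal{J}^{**}$, giving $P\mathcal{B}^{**}\cong(\mathcal{B}/\mathcal{J})^{**}$ with $Pb\leftrightarrow\rho(b)$) does this cleanly and is independent of the lemma. What your approach buys is a shorter, more structural argument that avoids the separation step; what the paper's approach buys is that it works entirely with functionals and vectors, keeping the identification of $P\mathcal{B}$ with $\mathcal{B}/\mathcal{J}$ as a subsequent corollary rather than an input.
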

\begin{proof}
Clearly we only need to prove that $\mathcal{B}\cap \mathcal{J}^{**}\subseteq \mathcal{J}$. Otherwise, there is an element $b\in \mathcal{B}\cap \mathcal{J}^{**}$ such that $b\notin\mathcal{J}$. By the Hahn-Banach separation theorem, there is a bounded linear functional $\varphi$ on $\mathcal{B}$ such that $\varphi(\mathcal{J})=0$ and $\varphi(b)\neq 0$. Then $\varphi$ induces a bounded linear functional $\phi$ on $\mathcal{B}/\mathcal{J}$ such that $\phi(\rho(b))\neq 0$, where $\rho: \,\mathcal{B}\rightarrow\mathcal{B}/\mathcal{J}$ is the quotient map. By Remark 10.1.3 of~\cite{Kadison}, there are vectors $\xi,\eta\in P\mathcal{H}$ such that
\[
\phi(\rho(b))=\langle b\xi,\eta\rangle=\langle bP\xi,P\eta\rangle=0
\]
since $b\in \mathcal{J}^{**}$ and $Pb=bP=0$. This contradicts to $\phi(\rho(b))\neq 0$.
\end{proof}

\begin{cor}\label{Appendix2}
$P\mathcal{B}=\mathcal{B}/\mathcal{J}$ such that $Pb=\rho(b)$ for $b\in \mathcal{B}$, where $\rho: \,\mathcal{B}\rightarrow\mathcal{B}/\mathcal{J}$ is the quotient map.
\end{cor}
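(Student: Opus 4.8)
The plan is to exhibit the map $\beta: \mathcal{B} \to \mathcal{B}^{**}$ given by $\beta(b) = Pb$ as a $*$-homomorphism whose kernel is exactly $\mathcal{J}$, and then to apply the first isomorphism theorem, the essential input being Lemma~\ref{Appendix}. First I would verify that $\beta$ is a $*$-homomorphism. Since $P$ is a central projection in $\mathcal{B}^{**}$ with $P = P^* = P^2$, for $b, c \in \mathcal{B}$ we have $(Pb)(Pc) = P^2 bc = Pbc$ by centrality, and $(Pb)^* = b^* P = P b^*$, so both multiplicativity and the $*$-property hold; thus $\beta$ is a $*$-homomorphism.

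Next I would identify $\ker\beta$ with $\mathcal{J}$. If $Pb = 0$, then $b = (I-P)b$ lies in $(I-P)\mathcal{B}^{**} = \mathcal{J}^{**}$, hence $b \in \mathcal{B} \cap \mathcal{J}^{**}$, which equals $\mathcal{J}$ by Lemma~\ref{Appendix}. Conversely, if $b \in \mathcal{J} \subseteq \mathcal{J}^{**} = (I-P)\mathcal{B}^{**}$, then $Pb \in P(I-P)\mathcal{B}^{**} = \{0\}$, since $P(I-P) = 0$. Therefore $\ker\beta = \mathcal{J}$ exactly.

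Finally, recalling that the image of a $*$-homomorphism between $C^*$-algebras is automatically a $C^*$-subalgebra, the range $P\mathcal{B} = \beta(\mathcal{B})$ is closed, and the first isomorphism theorem produces a $*$-isomorphism $\mathcal{B}/\mathcal{J} \to P\mathcal{B}$ carrying $\rho(b)$ to $Pb$. This is precisely the stated identification $P\mathcal{B} = \mathcal{B}/\mathcal{J}$ with $Pb = \rho(b)$. I expect no serious obstacle here: the whole content is packaged in Lemma~\ref{Appendix}, which supplies the one nontrivial inclusion $\ker\beta \subseteq \mathcal{J}$, so that is the crux of the matter; everything else is formal.
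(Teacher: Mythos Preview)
Your proof is correct and follows exactly the same route as the paper: define the map $b\mapsto Pb$, observe it is a $\ast$-homomorphism, use Lemma~\ref{Appendix} to identify its kernel with $\mathcal{J}$, and invoke the first isomorphism theorem. The paper's version is merely terser, omitting the routine verifications you spell out.
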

\begin{proof}
Consider the surjective $\ast$-homomorphism $\phi:\, \mathcal{B}\rightarrow P\mathcal{B}$ defined by $\phi(b)=Pb$ for $b\in \mathcal{B}$. Then $\phi(b)=0$ if and only if $(I-P)b=b$, i.e., $b\in \mathcal{J}^{**}$. By Lemma~\ref{Appendix}, $b\in \mathcal{J}$. Thus $\phi=\rho$.
\end{proof}

The following theorem is Lemma 2.3 of~\cite{Pop}, which plays a crucial role in the proof of Theorem~\ref{Pop}.
\begin{thm}{\rm (Pop)}
Let $\mathcal{A}$ be a $C^*$-algebra with the LP and $\pi:\,\mathcal{A}\rightarrow\mathcal{B}/\mathcal{J}$ a completely bounded homomorphism. Then there exists a completely bounded map $\beta:\,\mathcal{A}\rightarrow \mathcal{B}$ such that $\|\beta\|_{cb}\leq \|\pi\|_{cb}$ and $\pi=\rho\circ\beta$, where $\rho:\, \mathcal{B}\rightarrow\mathcal{B}/\mathcal{J}$ is the quotient map.
\end{thm}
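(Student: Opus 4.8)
The plan is to reduce the completely bounded lifting to the completely positive lifting furnished by the LP, via the standard Paulsen $2\times 2$ device, and then to reconcile the construction with $\mathcal{B}$ itself using the central--projection picture of the quotient established in Lemma~\ref{Appendix} and Corollary~\ref{Appendix2}. First I would normalize by setting $K=\|\pi\|_{cb}$ and passing to the complete contraction $u=\pi/K$; it suffices to produce a completely contractive lift $b:\mathcal{A}\to\mathcal{B}$ of $u$, since $\beta=Kb$ then satisfies $\|\beta\|_{cb}\le K=\|\pi\|_{cb}$ and $\rho\circ\beta=\pi$. Represent $\mathcal{B}/\mathcal{J}$ faithfully on $P\mathcal{H}$ as in the Appendix. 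Because $\mathcal{B}(P\mathcal{H})$ is injective, Wittstock's theorem (equivalently the off--diagonal form behind the equivalence of conditions $(1)$ and $(4)$ in the characterization theorem above) yields completely positive maps $\phi_1,\phi_2$ and a completely positive map
\[
\Psi=\begin{pmatrix}\phi_1 & u' \\ u & \phi_2\end{pmatrix},\qquad u'(a)=u(a^*)^*,
\]
which after adjusting the diagonal so that $\Psi(1)=I_2$ is unital completely positive.

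If $\Psi$ were valued in $M_2(\mathcal{B}/\mathcal{J})=M_2(\mathcal{B})/M_2(\mathcal{J})$, I would be done: applying the LP to the ideal $M_2(\mathcal{J})\subseteq M_2(\mathcal{B})$ lifts $\Psi$ to a u.c.p. map $\widetilde\Psi:\mathcal{A}\to M_2(\mathcal{B})$, and the $(2,1)$--corner $b:=\widetilde\Psi_{21}$ is a completely contractive compression, so $\|b\|_{cb}\le\|\widetilde\Psi\|_{cb}=1$, while applying the quotient map entrywise gives $\rho\circ b=\Psi_{21}=u$.

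The main obstacle is precisely that Wittstock's construction only places $\phi_1,\phi_2$ in the injective algebra $\mathcal{B}(P\mathcal{H})$, not in $\mathcal{B}/\mathcal{J}$: by the equivalence $(1)\Leftrightarrow(4)$, having completely positive diagonal maps valued in $(\mathcal{B}/\mathcal{J})^{**}$ is tantamount to $u$ being similar to a $*$--representation \emph{inside} $(\mathcal{B}/\mathcal{J})^{**}$, i.e. exactly the injectivity--type property that is not available for a general target. Thus $\Psi$ is a priori only u.c.p. into $M_2(\mathcal{B}(P\mathcal{H}))$, to which the LP does not apply.

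To circumvent this I would run the Paulsen step one level up and then descend. By Lemma~\ref{Appendix} and Corollary~\ref{Appendix2} the quotient map is compression by the central projection $P\in\mathcal{B}^{**}$, with $(\mathcal{B}/\mathcal{J})^{**}=P\mathcal{B}^{**}=\mathcal{B}^{**}/\mathcal{J}^{**}$ and $\rho(b)=Pb$. Performing the decomposition inside the von Neumann algebra $(\mathcal{B}/\mathcal{J})^{**}$ produces a genuine u.c.p. map $\Psi:\mathcal{A}\to M_2((\mathcal{B}/\mathcal{J})^{**})=M_2(\mathcal{B}^{**})/M_2(\mathcal{J}^{**})$; the LP, applied to the ideal $M_2(\mathcal{J}^{**})\subseteq M_2(\mathcal{B}^{**})$, then lifts it and, via the corner, yields a completely contractive $b_0:\mathcal{A}\to\mathcal{B}^{**}$ with $Pb_0(a)=u(a)$. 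The crux—the step I expect to be hardest—is the final descent: to replace the $\mathcal{B}^{**}$--valued $b_0$ by a genuinely $\mathcal{B}$--valued completely contractive $b$ with the same compression $Pb=u$. Here I would use $\mathcal{B}\cap\mathcal{J}^{**}=\mathcal{J}$ (which pins down a lift modulo $\mathcal{J}$ and rules out spurious $\mathcal{J}^{**}$--components landing outside $\mathcal{J}$) together with a Goldstine/point--weak$^*$ approximation of $b_0$ by $\mathcal{B}$--valued completely contractive maps; the delicate point is to keep the full $cb$--norm, not merely the norm, under control throughout this approximation so that the limiting $b$ still satisfies $\|b\|_{cb}\le 1$ and $\rho\circ b=u$.
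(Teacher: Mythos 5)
Your reduction via the Paulsen $2\times 2$ device founders at exactly the point you yourself flag, and the proposed fix does not repair it. Wittstock's decomposition of the complete contraction $u$ produces completely positive diagonal entries $\phi_1,\phi_2$ valued in the \emph{injective} algebra $\mathcal{B}(P\mathcal{H})$; to place them in $(\mathcal{B}/\mathcal{J})^{**}$ you would need a conditional expectation of $\mathcal{B}(P\mathcal{H})$ onto $(\mathcal{B}/\mathcal{J})''$, i.e.\ injectivity of $(\mathcal{B}/\mathcal{J})^{**}$ --- equivalently, by the $(1)\Leftrightarrow(4)$ equivalence you invoke, that $u$ is similar to a $*$-representation \emph{inside} $(\mathcal{B}/\mathcal{J})^{**}$. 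That is precisely the generalized similarity problem this paper is about, and by Pisier's theorem it is not available for a general non-nuclear $\mathcal{A}$; there is no general principle that lets you ``perform the decomposition inside $(\mathcal{B}/\mathcal{J})^{**}$''. So the u.c.p.\ map $\Psi$ into $M_2((\mathcal{B}/\mathcal{J})^{**})$ that your argument needs simply need not exist. Note also that once you are at the bidual level the lifting problem becomes vacuous: $P$ is central in $\mathcal{B}^{**}$, so the quotient $\mathcal{B}^{**}\to P\mathcal{B}^{**}$ splits completely positively ($x\mapsto x\oplus 0$) and no LP is used; all of the genuine difficulty has been displaced into your final descent from $\mathcal{B}^{**}$ to $\mathcal{B}$, for which a Goldstine/point--weak$^*$ approximation yields at best a net of approximate lifts and does not by itself produce a single completely contractive $b:\mathcal{A}\to\mathcal{B}$ with $\rho\circ b=u$ exactly. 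Both of the steps you label as the hard ones are, as written, gaps rather than proofs.

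The paper's route is different and avoids both obstacles. It applies Haagerup's similarity theorem in $\mathcal{B}(P\mathcal{H})$ to write $\pi=S^{-1}\theta S$ with $\theta$ a $*$-homomorphism and $\|S\|\,\|S^{-1}\|=\|\pi\|_{cb}$, then extends $S$ to $T=S\oplus(I-P)$ on all of $\mathcal{H}$ so that $T$ fixes $\mathcal{J}$ elementwise. Consequently $\mathcal{J}$ remains a selfadjoint ideal of the nonselfadjoint algebra $\mathcal{C}=T\mathcal{B}T^{-1}$, and since $\theta=T\pi T^{-1}$ is a $*$-homomorphism its range lies in $(\mathcal{C}\cap\mathcal{C}^*)/\mathcal{J}$, a quotient of a genuine $C^*$-algebra by the closed ideal $\mathcal{J}$. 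The LP then applies directly to the u.c.p.\ map $\theta$, and conjugating the u.c.p.\ lift back by $T^{-1}$ gives $\beta$ with $\|\beta\|_{cb}\le\|T\|\,\|T^{-1}\|=\|\pi\|_{cb}$ and $\rho\circ\beta=\pi$. The essential idea you are missing is this conjugation trick, which converts the completely bounded homomorphism into a u.c.p.\ map into an honest $C^*$-quotient \emph{before} invoking the LP, rather than trying to lift a merely completely bounded (or bidual-valued) map afterwards.
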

\begin{proof}
Since $\mathcal{B}^{**}=(\mathcal{B}/\mathcal{J})^{**}\oplus \mathcal{J}^{**}$, we will consider $\mathcal{B}$ in its universal representation on the Hilbert space $\mathcal{H}$ and denote by $P$ the central projection in $\mathcal{B}^{**}$ such that $(I-P)\mathcal{B}^{**}=\mathcal{J}^{**}$, $P\mathcal{B}^{**}=(\mathcal{B}/\mathcal{J})^{**}$.  We will therefore view $\mathcal{B}/\mathcal{J}$ universally represented on the Hilbert space $P\mathcal{H}$. An operator $X\in\mathcal{B}(P\mathcal{H})$ can be naturally viewed as an operator $X\oplus 0$ in $\mathcal{B}(\mathcal{H})$. By Corollary~\ref{Appendix2}, $P\mathcal{B}=\rho(\mathcal{B})$ such that $Pb=\rho(b)$ for $b\in \mathcal{B}$. Let $S\in \mathcal{B}(P\mathcal{H})$ be an invertible operator and $\theta:\, \mathcal{A}\rightarrow \mathcal{B}(P\mathcal{H})$ be a $\ast$-homomorphism such that $\pi(\cdot)=S^{-1}\theta(\cdot) S$ and $\|S^{-1}\|\|S\|=\|\pi\|_{cb}$. We can assume (by rescaling, if necessary) that $\|S\|=1$. Consider the invertible operator $T\in \mathcal{B}(\mathcal{H})$ defined as
$T=S\oplus (I-P)$. It is immediate that $\|T\|=1$, $\|T^{-1}\|=\|S^{-1}\|$, and $TjT^{-1}=j$ for all $j\in \mathcal{J}$. It follows that the non-selfadjoint algebra $T\mathcal{B}T^{-1}$ contains $\mathcal{J}$ as an ideal.

Consider the quotient map: $\tilde{\rho}(TbT^{-1})=TPbT^{-1}=T \rho(b) T^{-1}$ from $T\mathcal{B}T^{-1}$ onto $T\mathcal{B}T^{-1}/\mathcal{J}$. Note that $T\mathcal{B}T^{-1}/\mathcal{J}$ is a unital nonselfadjoint algebra with unit $P$.  Since \[\pi=S^{-1}\theta S=T^{-1}\theta T,\]
\begin{equation}\label{E:1}
\theta=T\pi T^{-1}
\end{equation} is a unital $\ast$-homomorphism from $\mathcal{A}$ into $T\mathcal{B}T^{-1}/\mathcal{J}$ and $\mathcal{J}$ is selfadjoint.  Let $\mathcal{C}=T\mathcal{B}T^{-1}$ and $\mathcal{C}^*=\{X^*:\, X\in \mathcal{C}\}$. Then $\mathcal{C}$ and $\mathcal{C}^*$ are norm closed unital algebras in $\mathcal{B}(\mathcal{H})$. Since $\mathcal{J}$ is selfadjoint, $\mathcal{J}\subset\mathcal{C}^*$. Note that $\theta(\mathcal{A})\subset \mathcal{C}^*/\mathcal{J}$. Note that $(\mathcal{C}\cap\mathcal{C}^*)/\mathcal{J}=(\mathcal{C}/\mathcal{J})\cap (\mathcal{C}^*/\mathcal{J})$ (see Lemma 4.2 of~\cite{Pop}). Let $\mathcal{D}=\mathcal{C}\cap\mathcal{C}^*$. Then $\mathcal{D}$ is a unital $C^*$-subalgebra of $\mathcal{B}(\mathcal{H})$ and $\mathcal{J}$ is an ideal of $\mathcal{D}$. Note that $\theta(\mathcal{A})\subset \mathcal{D}/\mathcal{J}$ and $\theta$ is a unital $\ast$-homomorphism from $\mathcal{A}$ into $\mathcal{D}/\mathcal{J}$.

Since $\mathcal{A}$ has the LP, there exists a unital completely positive map $\alpha:\, \mathcal{A}\rightarrow\mathcal{D}\subset T\mathcal{B}T^{-1}$ such that $\theta=\tilde{\rho}\circ \alpha$. Define $\beta=T^{-1}\alpha T:\, \mathcal{A}\rightarrow\mathcal{B}$. By equation~(\ref{E:1}),
\[
\rho\circ \beta=\rho\circ (T^{-1}\alpha T)=T^{-1}\tilde{\rho}(TT^{-1}\alpha TT^{-1})T=T^{-1}(\tilde{\rho}\circ \alpha)T=T^{-1}\theta T=\pi.
\]
 This completes the proof.
\end{proof}

\end{document}